\makeatletter \@addtoreset{equation}{section} \makeatother
\renewcommand\thetable{\thesection.\@arabic\c@table}
\theoremstyle{plain}
\newtheorem{theorem}{Theorem}[section]
\newtheorem{proposition}{Proposition}[section]
\newtheorem{lemma}{Lemma}[section]
\newtheorem{corollary}{Corollary}[section]
\newtheorem{definition}{Definition}[section]
\newtheorem{remark}{Remark}[section]
\begin{document}

	\title{Induced Topological Pressure for Dynamical Systems}

	\author{Wenhui Ma}
	\address{School of Mathematical Sciences, Soochow University\\
	 Suzhou 215006, Jiangsu, P.R. China}
	\address{Center for Dynamical Systems and Differential Equation, Soochow University\\
	Suzhou 215006, Jiangsu, P.R. China}
	\email{20234207032@stu.suda.edu.cn}

	\author{Yun Zhao}
	\address{School of Mathematical Sciences, Soochow University\\
		Suzhou 215006, Jiangsu, P.R. China}
	\address{Center for Dynamical Systems and Differential Equations, Soochow University\\
		Suzhou 215006, Jiangsu, P.R. China}
	\email{zhaoyun@suda.edu.cn}
	
\author{Hanjing Zhu}
	\address{School of Mathematical Sciences, Soochow University\\
		Suzhou 215006, Jiangsu, P.R. China}
	\address{Center for Dynamical Systems and Differential Equations, Soochow University\\
		Suzhou 215006, Jiangsu, P.R. China}
	\email{20234207026@stu.suda.edu.cn}

	\thanks{This work is partially supported by National Key R\&D Program of China (2022YFA1005802) and NSFC (12271386).}

	
	\begin{abstract}
This paper is devoted to the study of  induced topological pressure, including both  classical and nonlinear cases. For the classical induced topological pressure, we investigate equilibrium states, subdifferential and freezing states, while also discussing some basic properties of it. Additionally, the high dimensional nonlinear induced  topological pressure is introduced, and the corresponding variational principle is established.
	\end{abstract}

	\keywords{induced topological pressure; variational principle; equilibrium state; nonlinear thermodynamic formalism}
	
	\footnotetext{2010 {\it Mathematics Subject classification}:
		37D35,  28D20, 37A35}

	\maketitle

	\section{Introduction }\label{intro}

	In 1958, Kolmogorov and Sinai \cite{Kolmogorov1958} introduced the notion of measure theoretic entropy in dynamical systems, which is an important invariant quantity under metric isomorphisms. In 1965, Adler, Konheim and McAndrew \cite{adler1965} proposed the concept of topological entropy.
Measure theoretic entropy primarily describes the complexity of a dynamical system under a given invariant measure, while topological entropy does not depend on a specific measure and only considers the topological properties of the system, such as the distribution of orbits and the number of periodic points. The variational principle of entropy further points out the intrinsic relationship between these two quantities, i.e.,  when the measure changes within the set of invariant measures, the supremum of measure theoretic entropy equals the topological entropy.
In 1973, Bowen \cite{bowen1973topological} extended the concept of topological entropy to non-compact invariant sets, proposed topological entropy with dimension features, and proved the corresponding variational principle.

Topological pressure is a generalization of the topological entropy, which  measures the complexity of system behavior under a given observable function. It plays an important role in the theory of thermodynamic formalism and provides fundamental tools for the study of dimension theory in dynamical systems, see  \cite{barreira2011thermodynamic,pesin1997dimension,rue04} for detailed descriptions. In 1973, Ruelle \cite{ruelie1973statistical}  introduced the specific definition of topological pressure, and obtained a variational principle. Walters \cite{walters1975variational} further generalized it to general continuous maps on a compact metric space. Pesin and Pitskel \cite{pesin1984topological} defined topological pressure on non-compact sets, which is an extension of the topological entropy defined by Bowen \cite{bowen1973topological}, and proved the variational principle under some conditions.
The study of topological pressure is closely related to other fields of the theory of dynamical systems (see \cite{barreira2011thermodynamic,bowen1975equilibrium,katok1995introduction,keller1998equilibrium,parry1990zeta}). In particular, the  topological pressure  plays a fundamental role in statistical mechanics, ergodic theory and dynamical systems (see \cite{pesin1997dimension,katok1995introduction}).

 As the iteration of a dynamical system may involve different time scales, the concepts of induced entropy and induced topological pressure have garnered significant attention.
 In \cite{jaerisch2014induced}, Jaerisch, Kesseb\"{o}hmer and Lamei  introduced the notion of induced topological pressure for Markov shifts over an infinite alphabet, generalizing the result of Savchenko \cite{savchenko1998special}. The key advantage of induced topological pressure lies in the freedom to choose the scaling function \( \psi > 0 \), which finds applications in the theory of large deviations and  fractal geometry. In particular, using induced topological pressure, some thermodynamic quantities can be explicitly characterized by choosing an appropriate scaling function \( \psi \), thus providing more flexible analytical tools.
  Inspired by the work in \cite{jaerisch2014induced}, Xing and Chen \cite{xing2015induced} further defined induced topological pressure for  general topological dynamical systems and established the variational principle.
  Later, they established a relation between two induced topological pressures with a factor map in  \cite{xing2017induced}, pointing  out that the BS dimension (see \cite{barreira2000sets} for details) is a special case of  induced topological pressure. Following  Katok's idea, in \cite{xing2018katok} the authors introduced the notion of induced measure theoretic entropy,  proved its Katok formula, and  obtained a variational principle for the BS dimension via induced measure theoretic entropy. Recently, Rahimi and Ghodrati \cite{rahimi2025abramov} defined  induced topological pressure in random dynamical systems, and obtained the variational principle.

   Based on the research of the Curie-Weiss mean field theory \cite{leplaideur2019generalized}, Buzzi et al \cite{buzzi2020nonlinear} developed the theory of  nonlinear thermodynamical formalism, introducing the statistical mechanics of generalized mean field models into dynamical systems. Under appropriate conditions, they established the variational principle of nonlinear topological pressure and characterized the correspondence between nonlinear and linear equilibrium measures. Later, Barreira and Holanda  proposed the higher-dimensional version of the nonlinear thermodynamical formalism  for discrete dynamical systems and flows in \cite{barreira2022higher} and \cite{barreira2022nonlinear} respectively,  and established the  variational principles .  Recently, Ding and Wang \cite{ding2025some} studied nonlinear topological pressure  using the theory of Carath\'{e}odory structures, introduced several types of nonlinear topological  pressures on a subset, and established several variational principles in different forms for the nonlinear topological pressures. Following the approach described in \cite{fh16}, Yang et al \cite{ycz24} introduced nonlinear weighted topological pressure, and established variational principle for nonlinear weighted topological pressure under  suitable conditions.

  For the induced topological pressure  introduced in \cite{xing2015induced}, this paper provides several equivalent definitions  using open covers, and explores some basic properties, such as monotonicity, continuity, convexity and differentiability of induced topological pressure. In particular, this paper introduces the concepts of freezing states and zero temperature limits of induced topological pressure, and studies the relationship between freezing states and equilibrium states. Additionally, following the approaches described in  \cite{barreira2022higher} and \cite{xing2015induced},
  this paper defines high dimensional  nonlinear induced topological pressure, and establishes  the variational principle for this newly defined thermodynamic quantity.

  This paper is organized as follows: Section \ref{PM} gives some fundamental properties of the induced topological pressure that is introduced by Xing and Chen \cite{xing2015induced}, including equivalent definitions, monotonicity, continuity, convexity and differentiability. Section \ref{ES and FS} studies the basic properties of equilibrium states, subdifferential and  freezing states of induced topological pressure, and establishes the relationship between freezing states and equilibrium states. In Section \ref{NTP}, we first give some basic properties of high dimensional nonlinear topological pressure , which is defined in \cite{barreira2022higher}. Moreover, following the approach as described in \cite{xing2015induced}, we define the high dimensional nonlinear induced topological pressure, and establish the variational principle for it.

	\section{Properties of Induced Topological Pressure}\label{PM}
In this section, we first recall the classical induced topological pressure defined in \cite{xing2015induced}, and then give some properties of it.

Let $(X, f)$ be a topological dynamical system (TDS for short), that is, $f: X \rightarrow X$ is a continuous map on a compact metric space  $(X,d)$, and let
\[
B(x, \varepsilon) = \left\{ y \in X : d(x, y) < \varepsilon \right\} \quad \text{and} \quad \bar{B}(x, \varepsilon) = \left\{ y \in X : d(x, y) \leqslant \varepsilon \right\}.
\]
 Given \( n \in \mathbb{N} \), $\varepsilon>0$ and $x\in X$, denote by \( B_{n}(x, \varepsilon):=\{ y\in X: d_{n}(x, y)<\varepsilon \} \) the Bowen ball of radius $\varepsilon$ centered at $x$ of length $n$, where
$$
d_{n}(x, y):= \max \{ d( f^{i}(x), f^{i}(y) ) : i = 0, 1, \cdots, n-1 \},$$ and let
\( \bar{B}_{n}(x, \varepsilon) \)  denote the closure of $ B_{n}(x, \varepsilon)$.
Let \( Z \subseteq X \) be a non-empty set. A subset \( F_{n} \subset X \) is called an \( (n, \varepsilon) \)-spanning set of \( Z \), if for each \( y \in Z \), there exists \( x \in F_{n} \) such that \( d_{n}(x, y) \leqslant \varepsilon \). A subset \( E_{n} \subset Z \) is called an \( (n, \varepsilon) \)-separated set of \( Z \), if every two distinct points \( x, y \in E_{n} \) implies that \( d_{n}(x, y) > \varepsilon \).

Let $C(X, \mathbb{R})$ denote the Banach space of all continuous functions on $X$,  equipped with the supremum norm $\|\cdot \|$. For each $\varphi\in C(X, \mathbb{R})$, let $S_n\varphi:=\sum_{i=0}^{n-1}\varphi\circ f^i$.
Denote by \( \mathcal{M}(X,f) \) the space of all \( f \)-invariant Borel probability  measures on \( X \), and by \( \mathcal{E}(X,f) \) the set of all \( f \)-ergodic Borel probability  measures on \( X \).

Given \( \varphi, \psi \in C(X, \mathbb{R}) \) with \( \psi > 0 \). For every \( T > 0 \), let
\[
S_{T} = \left\{ n \in \mathbb{N} : \exists x \in X \text{ such that } S_{n} \psi(x) \leqslant T \text{ and } S_{n+1} \psi(x) > T \right\}.
\]
For \( n \in S_{T} \), let
\[
X_{n} = \left\{ x \in X : S_{n} \psi(x) \leqslant T \text{ and } S_{n+1} \psi(x) > T \right\}
\]
and
\[
Q_{\psi, T}(f, \varphi, \varepsilon) = \inf \Big\{ \sum_{n \in S_{T}} \sum_{x \in F_{n}} e^{S_{n} \varphi (x)} : F_{n}\text{ is an } (n, \varepsilon) \text{-spanning set of } X_{n}, n \in S_{T} \Big\}.
\]
 The  following quantity
\[
P_{\psi}(\varphi) = \lim _{\varepsilon \rightarrow 0} \limsup _{T \rightarrow \infty} \frac{1}{T} \log Q_{\psi, T}(f, \varphi, \varepsilon)
\]
is called the \( \psi \)-\emph{induced topological pressure} of $\varphi$ (with respect to \( f \)).

It is easy to see that 	$P_{\psi}(\varphi)=P_{\mathrm{top}}(f, \varphi)$ provided that $\psi(x)\equiv 1$, where $P_{\mathrm{top}}(f, \varphi)$ is the classical topological pressure of $\varphi$ w.r.t.  \( f \), and let $h_{\mathrm{top}}(f):=P_{\mathrm{top}}(f, 0)$ denote the topological entropy of $f$,  see \cite{walters1982introduction} for more details.
	
	\subsection{Equivalent definitions of induced topological pressure}\label{equivalent Top-Pressure}
We first recall the equivalent definition of induced topological pressure defined by separated sets as given in \cite{xing2015induced}.
For every \( T > 0 \), let
\[
P_{\psi,T}(f, \varphi, \varepsilon) = \sup \Big\{ \sum_{n \in S_T} \sum_{x \in E_n}
e^{ S_n \varphi(x) }
: E_n\,\text{is an}\, (n, \varepsilon)\text{-separated set of}\, X_n, n \in S_T \Big\},
\]
one has that
\[
P_{\psi}(\varphi) = \lim _{\varepsilon \rightarrow 0} \limsup_{T \rightarrow \infty} \frac{1}{T} \log P_{\psi, T}(f, \varphi, \varepsilon),
\]
see \cite{xing2015induced} for the detailed proofs.

In the following, we define the induced topological pressure via open covers, and show that it is equivalent to the ones in \cite{xing2015induced}.

Given \( \varphi, \psi \in C(X, \mathbb{R}) \) with \( \psi > 0 \) and an open cover \( \alpha \) of \( X \). For every \( T > 0 \),  let
\[
q_{\psi, T}(f, \varphi, \alpha)= \inf \Big\{\sum_{n \in S_{T}}\sum_{B \in \beta_{n}} \inf _{x \in B} e^{S_{n} \varphi(x)} : \beta_{n} \subset \bigvee_{i=0}^{n-1} T^{-i} \alpha, X_{n} \subset \bigcup_{B \in \beta_{n}}B, n \in S_{T}  \Big\}
\]
and
\[
p_{\psi, T}(f, \varphi, \alpha)= \inf \Big\{\sum_{n \in S_{T}}\sum_{B \in \beta_{n}} \sup _{x \in B} e^{S_{n} \varphi(x)} : \beta_{n} \subset \bigvee_{i=0}^{n-1} T^{-i} \alpha, X_{n} \subset \bigcup_{B \in \beta_{n}}B, n \in S_{T}  \Big\}.
\]
Clearly, \( q_{\psi, T}(f, \varphi, \alpha) \leqslant p_{\psi, T}(f, \varphi, \alpha) \).

\begin{lemma} \label{equivalent definition lm}
Let \( (X, f) \) be a TDS, \( \varphi, \psi \in C(X, \mathbb{R}) \) with \( \psi > 0 \), and \( T > 0 \).
\begin{enumerate}
\item[(i)] If \( \alpha \) is an open cover of \( X \) with Lebesgue number \( \delta \), then
\[
q_{\psi, T}(f, \varphi, \alpha) \leqslant Q_{\psi, T}(f, \varphi, \delta / 2) \leqslant P_{\psi, T}(f, \varphi, \delta / 2).
\]
\item[(ii)] If \( \varepsilon>0 \), and \( \gamma \) is an open cover of \( X \) with \( \operatorname{diam}( \gamma ):=\sup\{|A|: A\in \gamma\} \leqslant \varepsilon \), then
\[
Q_{\psi, T}(f, \varphi, \varepsilon) \leqslant P_{\psi, T}(f, \varphi, \varepsilon) \leqslant p_{\psi, T}(f, \varphi, \gamma).
\]
\end{enumerate}
\end{lemma}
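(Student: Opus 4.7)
The plan is to prove Lemma \ref{equivalent definition lm} by combining three separate comparisons. Both parts share the middle inequality $Q_{\psi,T}(f,\varphi,\varepsilon) \leqslant P_{\psi,T}(f,\varphi,\varepsilon)$, which I would handle first by the standard device: for each $n \in S_T$, let $E_n$ be a maximal $(n,\varepsilon)$-separated subset of $X_n$; maximality forces $E_n$ to also be $(n,\varepsilon)$-spanning of $X_n$, so the family $\{E_n\}_{n \in S_T}$ is admissible in the definition of $Q_{\psi,T}$, giving $Q_{\psi,T} \leqslant \sum_{n \in S_T} \sum_{x \in E_n} e^{S_n\varphi(x)} \leqslant P_{\psi,T}$. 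Each of the remaining inequalities is the induced-pressure analogue of a standard topological pressure estimate; the only genuinely new feature is bookkeeping across all $n \in S_T$ simultaneously.

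For the first inequality in (i), I would turn a spanning family into a covering by $\bigvee_{i=0}^{n-1} f^{-i}\alpha$. Fix an $(n,\delta/2)$-spanning set $F_n$ of $X_n$ for each $n \in S_T$. Because $\delta$ is a Lebesgue number of $\alpha$, every closed ball $\bar{B}(f^i(x), \delta/2)$ has diameter at most $\delta$ and hence sits inside some $A_i(x) \in \alpha$; intersecting the pullbacks $f^{-i}(A_i(x))$ over $i = 0, \dots, n-1$ produces a set $B_x \in \bigvee_{i=0}^{n-1} f^{-i}\alpha$ containing the closed Bowen ball $\bar{B}_n(x, \delta/2)$. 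The collection $\beta_n := \{B_x : x \in F_n\}$ then covers $X_n$, and since $x \in B_x$ we have $\inf_{y \in B_x} e^{S_n\varphi(y)} \leqslant e^{S_n\varphi(x)}$. Summing over $n \in S_T$ and taking infimum over admissible $F_n$ would deliver $q_{\psi,T}(f,\varphi,\alpha) \leqslant Q_{\psi,T}(f,\varphi,\delta/2)$.

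For the last inequality in (ii), I would go in the opposite direction: convert a cover into a bound against separated sets. Take an open cover $\gamma$ with $\operatorname{diam}(\gamma) \leqslant \varepsilon$, an $(n,\varepsilon)$-separated subset $E_n$ of $X_n$, and a subfamily $\beta_n \subset \bigvee_{i=0}^{n-1} f^{-i}\gamma$ covering $X_n$. Any $B \in \beta_n$ has the form $\bigcap_{i=0}^{n-1} f^{-i}(A_i)$ with $A_i \in \gamma$, so two points $x, y \in B$ satisfy $d(f^i(x), f^i(y)) \leqslant \operatorname{diam}(A_i) \leqslant \varepsilon$ for every $i$, whence $d_n(x,y) \leqslant \varepsilon$. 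Combined with the strict separation $d_n(x,y) > \varepsilon$ on $E_n$, this forces $|B \cap E_n| \leqslant 1$. Assigning to each $x \in E_n$ one element $B(x) \in \beta_n$ with $x \in B(x)$ then defines an injection $E_n \hookrightarrow \beta_n$, yielding $\sum_{x \in E_n} e^{S_n\varphi(x)} \leqslant \sum_{B \in \beta_n} \sup_{y \in B} e^{S_n\varphi(y)}$. Supping over $E_n$, infing over $\beta_n$, and summing over $n \in S_T$ then gives $P_{\psi,T}(f,\varphi,\varepsilon) \leqslant p_{\psi,T}(f,\varphi,\gamma)$.

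No step hides a real obstacle; the only technical points to watch are that the Bowen balls in part (i) must be taken closed to match the $\leqslant \varepsilon$ convention for spanning sets and to make the Lebesgue-number argument yield a genuine containment, and that in part (ii) the compatibility of the non-strict bound $\operatorname{diam}(\gamma) \leqslant \varepsilon$ with the strict separation $d_n > \varepsilon$ is what makes $|B \cap E_n| \leqslant 1$ work. Keeping the $S_T$-indexed sums parallel throughout is the only bookkeeping peculiar to the induced setting.
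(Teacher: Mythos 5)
Your proof is correct and follows essentially the same route as the paper's: maximal separated sets are spanning for the middle inequality; for (i), the Lebesgue number pushes each Bowen ball $\bar{B}_n(x,\delta/2)$ into a member $B_x$ of $\bigvee_{i=0}^{n-1} f^{-i}\alpha$, and $x \in B_x$ bounds the infimum; for (ii), $\operatorname{diam}(\gamma)\leqslant\varepsilon$ forces each element of $\bigvee_{i=0}^{n-1} f^{-i}\gamma$ to meet $E_n$ in at most one point. Your writeup only spells out a couple of steps the paper leaves implicit (the coordinatewise argument producing $B_x$, and the explicit injection $E_n \hookrightarrow \beta_n$); otherwise the two proofs coincide.
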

\begin{proof}
For every \( \varepsilon>0 \),  we have that
\( Q_{\psi, T}(f, \varphi, \varepsilon) \leqslant P_{\psi, T}(f, \varphi, \varepsilon) \), since an $(n, \varepsilon)$-separated set of maximum cardinality is  necessarily an $(n, \varepsilon)$-spanning set.

(i) For every \( n \in S_{T} \), if \( F_{n} \) is an \( (n, \delta / 2) \)-spanning set of \( X_{n} \), then
\[
X_{n} \subset \bigcup_{x \in F_{n}} \bigcap_{i=0}^{n-1} f^{-i} \bar{B}\left(f^{i} x , \delta / 2\right).
\]
Since $\delta$ is the Lebesgue number of $\alpha$, for each $x \in F_{n}$, there exists an open subset $B_{x}\in  \bigvee_{i=0}^{n-1} T^{-i} \alpha$ such that $\bar{B}_{n}\left( x , \delta / 2\right) \subset B_{x}$. Therefore, one can find a sub-collection $\beta_{n}=\{B_{x}:x \in F_{n}\}$ of $\bigvee_{i=0}^{n-1} T^{-i} \alpha$ such that  $X_{n} \subset \bigcup_{B \in \beta_{n}}B$, which implies that
\[
\sum_{B \in \beta_{n}} \inf _{x \in B} e^{S_{n} \varphi(x)} \leqslant \sum_{x \in F_{n}} e^{S_{n} \varphi(x) }.
\]
Hence, we have that
\[
\sum_{n \in S_{T}}\sum_{B \in \beta_{n}} \inf _{x \in B} e^{S_{n} \varphi(x)} \leqslant \sum_{n \in S_{T}}\sum_{x \in F_{n}} e^{S_{n} \varphi(x) },
\]
and consequently
\[
q_{\psi, T}(f, \varphi, \alpha) \leqslant \sum_{n \in S_{T}}\sum_{x \in F_{n}} e^{S_{n} \varphi(x) }.
\]
Therefore, \( q_{\psi, T}(f, \varphi, \alpha) \leqslant Q_{\psi, T}(f, \varphi, \delta / 2) \).

(ii) For every \( n \in S_{T} \), take an $(n, \varepsilon)$-separated set $E_{n}$ of $X_{n}$. Since \( \operatorname{diam}( \gamma ) \leqslant \varepsilon \), no element of \( \bigvee_{i=0}^{n-1} f^{-i} \gamma \) can contain two points from \( E_{n} \). Let $\beta_{n}$ be a sub-collection of $\bigvee_{i=0}^{n-1} T^{-i} \gamma$ with $X_{n} \subset \bigcup_{B \in \beta_{n}}B$, then we have that
\[
\sum_{x \in E_{n}} e^{S_{n} \varphi(x)} \leqslant \sum_{B \in \beta_{n}} \sup _{x \in B} e^{S_{n} \varphi(x)}.
\]
Therefore,
\[
\sum_{n \in S_{T}}\sum_{x \in E_{n}} e^{S_{n} \varphi(x)} \leqslant  p_{\psi, T}(f, \varphi, \gamma).
\]
Consequently, one has that  $P_{\psi, T}(f, \varphi, \varepsilon) \leqslant p_{\psi, T}(f, \varphi, \gamma)$.
\end{proof}

\begin{remark} \label{equivalent definition rmk}
(1) If \( \alpha, \gamma \) are open covers of \( X \) with \( \alpha < \gamma \), i.e., \( \gamma \) is a refinement of \( \alpha \), then
$
q_{\psi, T}(f, \varphi, \alpha) \leqslant q_{\psi, T}(f, \varphi, \gamma)
$; (2)Let $m=\inf \psi$ and $M=\sup \psi$. If \( d(x, y)<\operatorname{diam}(\alpha)\) implies that \( |\varphi(x)-\varphi(y)| \leqslant \delta \), then for each \( B \in \bigvee_{i=0}^{n-1} f^{-i} \alpha \),
\[
\sup _{x \in B} e^{S_{n} \varphi(x)} \leqslant e^{n\delta}\inf _{x \in B} e^{S_{n} \varphi(x)}.
\]
Consequently, one can show that
\[
\sum_{n \in S_{T}}\sum_{B \in \beta_{n}} \sup _{x \in B} e^{S_{n} \varphi(x)} \leqslant \sum_{n \in S_{T}}\sum_{B \in \beta_{n}} e^{n\delta} \inf _{x \in B} e^{S_{n} \varphi(x)}.
\]
Since \( n \leqslant T/m \), one can further show that
\[
p_{\psi, T}(f, \varphi, \alpha) \leqslant e^{T \delta / m} q_{\psi, T}(f, \varphi, \alpha).
\]
\end{remark}

The following theorem gives several equivalent definitions of induced topological pressure.

\begin{theorem} \label{open cover et}
Let \( (X, f) \) be a TDS, and \( \varphi, \psi \in C(X, \mathbb{R}) \) with \( 0<m \leqslant \psi(x) \leqslant M \) for each $x\in X$. Then each of the following equals \( P_{\psi}(\varphi) \):
\begin{enumerate}
\item[(i)] $\displaystyle \lim _{\delta \rightarrow 0}\sup _{\alpha}\limsup _{T \to \infty}\frac{1}{T} \log p_{\psi, T}(f, \varphi, \alpha)$, where the supremum is taken over all open cover $\alpha$ of $X$  with  $\operatorname{diam}(\alpha) \leqslant \delta$;
\item[(ii)] $\displaystyle 
    \lim _{k \rightarrow \infty}  \limsup _{T \rightarrow \infty}\frac{1}{T} \log p_{\psi, T}\left(f, \varphi,  \alpha_{k} \right) $, where $\left\{\alpha_{k}\right\}_{k \in \mathbb{N}}$ is a sequence of open covers of $X$  with $\operatorname{diam}\left(\alpha_{k}\right) \rightarrow 0$  as $k \rightarrow \infty $;
\item[(iii)] $\displaystyle \lim _{\delta \to 0}\sup_{\alpha} \limsup _{T\rightarrow \infty}\frac{1}{T} \log q_{\psi, T}(f, \varphi, \alpha)$, where the supremum  is taken over all open covers $\alpha$ of $X$ with  $\operatorname{diam}(\alpha) \leqslant \delta $;
\item[(iv)] $\displaystyle
\lim _{k \rightarrow \infty}   \limsup _{T \rightarrow \infty}\frac{1}{T} \log q_{\psi, T}\left(f, \varphi,  \alpha_{k} \right)$, where $\left\{\alpha_{k}\right\}_{k \in \mathbb{N}}$  is a sequence of open covers of $X$  with $\operatorname{diam}\left(\alpha_{k}\right) \rightarrow 0 \text{ as } k \rightarrow \infty $;
\item[(v)] $\displaystyle  \sup _{\alpha}\limsup _{T \rightarrow \infty}\frac{1}{T} \log q_{\psi, T}(f, \varphi, \alpha)$, where the supremum  is taken over all open covers of $X $.
\end{enumerate}
\end{theorem}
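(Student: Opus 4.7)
The plan is to establish the chain
\[
P_\psi(\varphi) \leq (\mathrm{i}) \leq (\mathrm{iii}) \leq (\mathrm{v}) \leq P_\psi(\varphi),
\]
which collapses (i), (iii), (v) all to $P_\psi(\varphi)$, and then to squeeze the sequential limits in (ii) and (iv) between the same bounds. The three main ingredients are the two halves of Lemma \ref{equivalent definition lm}, the $p$--$q$ comparison in Remark \ref{equivalent definition rmk}(2), and uniform continuity of $\varphi$ on the compact space $X$.

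For $P_\psi(\varphi) \leq (\mathrm{i})$, I fix $\varepsilon > 0$, choose an open cover $\gamma$ with $\diam(\gamma) \leq \varepsilon$ (possible by compactness), and apply Lemma \ref{equivalent definition lm}(ii) to obtain
\[
\limsup_{T\to\infty}\frac{1}{T}\log Q_{\psi,T}(f,\varphi,\varepsilon) \leq \sup_{\diam(\alpha)\leq\varepsilon}\limsup_{T\to\infty}\frac{1}{T}\log p_{\psi,T}(f,\varphi,\alpha),
\]
then send $\varepsilon\to 0$. For $(\mathrm{v})\leq P_\psi(\varphi)$, every open cover $\alpha$ admits a positive Lebesgue number $\delta$, so Lemma \ref{equivalent definition lm}(i) yields $q_{\psi,T}(f,\varphi,\alpha)\leq Q_{\psi,T}(f,\varphi,\delta/2)$, whence $\limsup_T \frac{1}{T}\log q_{\psi,T}(f,\varphi,\alpha) \leq P_\psi(\varphi)$ because $\limsup_T \frac{1}{T}\log Q_{\psi,T}(f,\varphi,\cdot)$ is monotone in its last argument; taking $\sup_\alpha$ completes the step. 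The inequality $(\mathrm{iii})\leq(\mathrm{v})$ is trivial since (iii) is (v) with the supremum restricted.

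The middle step $(\mathrm{i})\leq(\mathrm{iii})$ is where uniform continuity enters. Given $\eta>0$, choose $\delta_0>0$ so that $\diam(\alpha)<\delta_0$ implies $|\varphi(x)-\varphi(y)|\leq\eta$ whenever $x,y$ share a common element of $\alpha$; Remark \ref{equivalent definition rmk}(2) then gives $p_{\psi,T}(f,\varphi,\alpha)\leq e^{T\eta/m}q_{\psi,T}(f,\varphi,\alpha)$ for all such $\alpha$, so for every $\delta<\delta_0$,
\[
\sup_{\diam(\alpha)\leq\delta}\limsup_{T\to\infty}\frac{1}{T}\log p_{\psi,T}(f,\varphi,\alpha)\leq \frac{\eta}{m}+\sup_{\diam(\alpha)\leq\delta}\limsup_{T\to\infty}\frac{1}{T}\log q_{\psi,T}(f,\varphi,\alpha).
\]
Sending $\delta\to 0$ and afterwards $\eta\to 0$ closes the chain and proves (i) $=$ (iii) $=$ (v) $= P_\psi(\varphi)$.

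For (ii), for each $k$ the trivial bound $\limsup_T \frac{1}{T}\log p_{\psi,T}(f,\varphi,\alpha_k)\leq \sup_{\diam(\alpha)\leq\diam(\alpha_k)}\limsup_T \frac{1}{T}\log p_{\psi,T}(f,\varphi,\alpha)$ gives $\limsup_k \leq (\mathrm{i})=P_\psi(\varphi)$; conversely, once $\diam(\alpha_k)\leq\varepsilon$, Lemma \ref{equivalent definition lm}(ii) yields $Q_{\psi,T}(f,\varphi,\varepsilon)\leq p_{\psi,T}(f,\varphi,\alpha_k)$, so $\liminf_k\geq \limsup_T\frac{1}{T}\log Q_{\psi,T}(f,\varphi,\varepsilon)$; letting $\varepsilon\to 0$ forces $\liminf_k\geq P_\psi(\varphi)$, and the limit in (ii) exists and equals $P_\psi(\varphi)$. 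For (iv) the upper bound comes from $q\leq p$ combined with (ii), and the lower bound reuses Remark \ref{equivalent definition rmk}(2) along the sequence: for any $\eta>0$ and $k$ large, $p_{\psi,T}(f,\varphi,\alpha_k)\leq e^{T\eta/m}q_{\psi,T}(f,\varphi,\alpha_k)$, which combined with (ii) gives $P_\psi(\varphi)\leq \eta/m+\liminf_k\limsup_T\frac{1}{T}\log q_{\psi,T}(f,\varphi,\alpha_k)$, and $\eta\to 0$ finishes. The main technical care is interleaving the limits so that the uniform-continuity modulus $\eta$ vanishes before the diameter restriction tightens; once that order is fixed the remainder is routine bookkeeping.
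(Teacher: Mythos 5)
Your proof is correct and relies on exactly the same three ingredients as the paper (Lemma~\ref{equivalent definition lm}(i), Lemma~\ref{equivalent definition lm}(ii), and the $p$--$q$ comparison from Remark~\ref{equivalent definition rmk}(2) together with uniform continuity of $\varphi$); the only difference is organizational. The paper proves each of (i), (iii), (v) separately (each as a two-sided inequality, with (iii) and (v) bootstrapping from (i)), whereas you set up the circular chain $P_\psi(\varphi)\leq(\mathrm{i})\leq(\mathrm{iii})\leq(\mathrm{v})\leq P_\psi(\varphi)$ and squeeze (ii) and (iv) afterward, which is an equally valid way to thread the same inequalities.
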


\begin{proof}
(i) For any \( \delta>0 \), let $\gamma$ be an arbitrary open cover of $X$ satisfying $\operatorname{diam}(\gamma) \leqslant \delta$. By Lemma \ref{equivalent definition lm} (ii), we have that
\[
P_{\psi, T}(f, \varphi, \delta) \leqslant p_{\psi, T}(f, \varphi, \gamma).
\]
Therefore, one has that
\begin{align*}
 \limsup _{T \rightarrow \infty} \frac{1}{T} \log P_{\psi, T}(f, \varphi, \delta) \leqslant \sup_{\gamma} \limsup _{T \rightarrow \infty}\frac{1}{T} \log p_{\psi, T}(f, \varphi, \gamma),
 \end{align*}
where the supremum is taken over all  open covers $\gamma $ of $X$  with $\operatorname{diam}(\gamma) \leqslant \delta$. Taking \( \delta \to 0 \), one has that \( P_{\psi}(\varphi) \) is not larger than the expression in (i).

Let \( \alpha \) be an  open cover of \( X \) with Lebesgue number \( \delta \). By Lemma \ref{equivalent definition lm} (i), we have that
\[
q_{\psi, T}(f, \varphi, \alpha) \leqslant P_{\psi, T}(f, \varphi, \delta / 2).
\] Let
$
\tau_{\alpha} := \sup \{|\varphi(x)-\varphi(y)| : d(x, y) \leqslant \operatorname{diam}(\alpha)\},
$
 by Remark \ref{equivalent definition rmk} (2) and the previous inequality, one has that
\[
p_{\psi, T}(f, \varphi, \alpha) \leqslant e^{T \tau_{\alpha} / m} q_{\psi, T}(f, \varphi, \alpha)\leqslant e^{T \tau_{\alpha} / m} P_{\psi, T}(f, \varphi, \delta / 2).
\]
Hence, we have that
\[
\limsup _{T \rightarrow \infty} \frac{1}{T} \log p_{\psi, T}(f, \varphi, \alpha) \leqslant \frac{\tau_{\alpha}}{m}+\limsup _{T \rightarrow \infty}\frac{1}{T} \log P_{\psi, T}(f, \varphi, \delta / 2).
\]
Since \( \varphi \) is uniformly continuous, \( \tau_{\alpha} \to 0 \) as \( \operatorname{diam}(\alpha) \to 0 \),  which yields that
\[
\lim _{\eta \rightarrow 0}\sup _{\alpha}\Big\{\limsup _{T \rightarrow \infty} \frac{1}{T} \log p_{\psi, T}(f, \varphi, \alpha): \operatorname{diam}(\alpha) \leqslant \eta \Big\} \leqslant P_{\psi}(\varphi).
\]
Thus, (i) is proved. The second statement can be proven in a similar fashion.

(iii) By the definitions and Remark \ref{equivalent definition rmk}(2), one has that
\[
e^{-T \tau_{\alpha} / m}p_{\psi, T}(f, \varphi, \alpha) \leqslant q_{\psi, T}(f, \varphi, \alpha)\leqslant p_{\psi, T}(f, \varphi, \alpha).
\]
Hence,  we have that
\[
\begin{aligned}
\frac{-\tau_{\alpha}}{m}+\limsup _{T \rightarrow \infty} \frac{1}{T} \log p_{\psi, T}(f, \varphi, \alpha) & \leqslant \limsup _{T \rightarrow \infty} \frac{1}{T} \log q_{\psi, T}(f, \varphi, \alpha) \\
& \leqslant \limsup _{T \rightarrow \infty} \frac{1}{T} \log p_{\psi, T}(f, \varphi, \alpha).
\end{aligned}
\]
Since \( \varphi \) is uniformly continuous, \( \tau_{\alpha} \to 0 \) as \( \operatorname{diam}(\alpha) \to 0 \). This together with the first statement implies the desired result.

The statement in  (iv) can be proven in a similar fashion.

(v) Let $\alpha$ be an open cover of $X$, and let $2\varepsilon$ denote the Lebesgue number of $\alpha$. By Lemma \ref{equivalent definition lm} (i), one has that
\[
q_{\psi, T}(f, \varphi, \alpha) \leqslant Q_{\psi, T}(f, \varphi, \varepsilon).
\]
Therefore,
\[
\limsup _{T \rightarrow \infty}\frac{1}{T}\log q_{\psi, T}(f, \varphi, \alpha) \leqslant  \limsup _{T \rightarrow \infty}\frac{1}{T}\log Q_{\psi, T}(f, \varphi, \varepsilon) \le P_{\psi}(\varphi).
\]
Since the open cover $\alpha$ is chosen arbitrarily, one has that
\[
\sup_{\alpha}\limsup _{T \rightarrow \infty}\frac{1}{T} \log q_{\psi, T}(f, \varphi, \alpha) \le P_{\psi}(\varphi).
\]
For each $\delta>0$,  we have that
\[
\sup_{\alpha}\limsup _{T \rightarrow \infty}\frac{1}{T} \log q_{\psi, T}(f, \varphi, \alpha) \geqslant \sup_{\gamma}\limsup _{T \rightarrow \infty}\frac{1}{T} \log q_{\psi, T}(f, \varphi, \gamma),
\]
where the supremum on the RHS of the above inequality is taken over all open covers $\gamma$ of $X$ with $\operatorname{diam}(\gamma) \le \delta$. Letting $\delta \to 0$, it follows from (iii) that
\[
\sup_{\alpha}\limsup _{T \rightarrow \infty}\frac{1}{T} \log q_{\psi, T}(f, \varphi, \alpha) \geqslant P_{\psi}(\varphi).
\]
Thus, (v) is proved.
\end{proof}
	
	\subsection{Properties of induced topological pressure} \label{induce prop}
We first recall the variational principle for the induced topological pressure in \cite{xing2015induced}.

\begin{theorem}\label{induced Variational Principle}
Let \( (X, f) \) be a TDS, and \( \varphi, \psi \in C(X, \mathbb{R}) \) with \( \psi > 0 \). Then
\[
P_{\psi}(\varphi) = \sup \Big\{ \frac{h_{\nu}(f)}{\int \psi \, d \nu} + \frac{\int \varphi \, d \nu}{\int \psi \, d \nu} : \nu \in \mathcal{M}( X,f ) \Big\},
\]
where $h_{\nu}(f)$ denotes the measure theoretic entropy of $f$ w.r.t. $\nu$, see \cite{walters1982introduction} for more detailed descriptions.
\end{theorem}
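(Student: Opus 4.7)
The plan is to prove the two inequalities $\geq$ and $\leq$ separately. Since $h_\nu(f)$, $\int\varphi\,d\nu$, $\int\psi\,d\nu$ are affine in $\nu$ and $\int\psi\,d\nu \geq m > 0$, applying the ergodic decomposition together with the elementary fact that $\int A\,dP/\int B\,dP \leq \sup_\omega A(\omega)/B(\omega)$ whenever $B > 0$ reduces the supremum in the theorem to one over $\nu \in \mathcal{E}(X,f)$.

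\emph{Lower bound.} Fix ergodic $\nu$ and let $\bar\psi := \int\psi\,d\nu$, $\bar\varphi := \int\varphi\,d\nu$. By Birkhoff and Egorov, for each $\eta > 0$ there are $N_0$ and a Borel set $G$ with $\nu(G) > 1 - \eta$ on which $|S_n\psi/n - \bar\psi| < \eta$ and $|S_n\varphi/n - \bar\varphi| < \eta$ hold uniformly for $n \geq N_0$. For $T$ large and $x \in G$, the unique $n(x) \in S_T$ with $x \in X_{n(x)}$ lies in an $O(\eta T)$-neighbourhood of $T/\bar\psi$. By Katok's entropy formula, for small $\varepsilon > 0$ and $n$ large there is an $(n,\varepsilon)$-separated subset of $G$ of cardinality $\geq e^{n(h_\nu(f) - \eta)}$. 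Taking $n$ near $T/\bar\psi$ and pigeonholing over the $O(\eta T)$ admissible values of $n(x)$ yields some $n_0$ close to $T/\bar\psi$ and an $(n_0,\varepsilon)$-separated subset $E_{n_0} \subset X_{n_0}$ of size at least $e^{n_0(h_\nu(f) - \eta)}/(C\eta T)$. Since each $x \in E_{n_0} \subset G$ satisfies $e^{S_{n_0}\varphi(x)} \geq e^{n_0(\bar\varphi - \eta)}$, one obtains
\[
P_{\psi,T}(f,\varphi,\varepsilon) \geq \frac{e^{n_0(h_\nu(f) + \bar\varphi - 2\eta)}}{C\eta T}.
\]
Dividing by $T$, using $n_0/T \to 1/\bar\psi$, and letting $T \to \infty$ then $\eta, \varepsilon \to 0$ gives $P_\psi(\varphi) \geq (h_\nu(f) + \bar\varphi)/\bar\psi$.

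\emph{Upper bound.} Write $Z_n := \sum_{x \in E_n} e^{S_n\varphi(x)}$ for maximal $(n,\varepsilon)$-separated sets $E_n$ of $X_n$. Since $|S_T| = O(T)$, there is some $n^* = n^*(T) \in S_T$ with $Z_{n^*} \geq P_{\psi,T}(f,\varphi,\varepsilon)/|S_T|$, so it suffices to bound $T^{-1}\log Z_{n^*}$. Form the weighted atomic measure $\sigma_T := Z_{n^*}^{-1}\sum_{x \in E_{n^*}} e^{S_{n^*}\varphi(x)}\delta_x$ and its Birkhoff average $\mu_T := (n^*)^{-1}\sum_{i=0}^{n^*-1} f^i_*\sigma_T$. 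Since $n^* \geq T/M \to \infty$, any weak-$\ast$ limit $\mu$ along a subsequence $T_k$ is $f$-invariant. Choose a partition $\xi$ with $\mu(\partial\xi) = 0$ and $\operatorname{diam}(\xi) < \varepsilon$. Each atom of $\bigvee_{i=0}^{n^*-1} f^{-i}\xi$ meets $E_{n^*}$ in at most one point, so the Gibbs identity
\[
\log Z_{n^*} = H_{\sigma_T}\Bigl(\bigvee_{i=0}^{n^*-1} f^{-i}\xi\Bigr) + n^*\int\varphi\,d\mu_T
\]
holds, and the standard blocking estimate gives $H_{\sigma_T}(\bigvee_{i=0}^{n^*-1} f^{-i}\xi) \leq (n^*/q)H_{\mu_T}(\bigvee_{j=0}^{q-1} f^{-j}\xi) + O(q\log\#\xi)$ for every $q \in \mathbb{N}$. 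The containment $S_{n^*}\psi(x) \in (T-M,T]$ for $x \in X_{n^*}$ forces $n^*\int\psi\,d\mu_T \in (T-M,T]$, so $n^*/T \to 1/\int\psi\,d\mu$ along the subsequence. Letting $T_k \to \infty$ then $q \to \infty$ produces
\[
\limsup_{T} \frac{\log Z_{n^*}}{T} \leq \frac{h_\mu(f) + \int\varphi\,d\mu}{\int\psi\,d\mu},
\]
and $\varepsilon \to 0$ completes the upper bound.

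The main obstacle is coordinating the two scales in the upper bound: the iteration length $n^*(T)$ at which the Misiurewicz/blocking machinery operates, versus the budget $T$ that defines $S_T$ and $X_n$. The crucial identity $\int\psi\,d\mu_T = T/n^* + O(1/n^*)$, a direct consequence of the definition of $X_{n^*}$, converts the per-iterate estimate $(1/n^*)\log Z_{n^*} \lesssim h_\mu(f) + \int\varphi\,d\mu$ into the ratio form required by the theorem; the bound $\psi \geq m > 0$ ensures $\int\psi\,d\mu > 0$ in the weak-$\ast$ limit, so no degeneracy arises.
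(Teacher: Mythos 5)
The paper does not prove this theorem; it is recalled verbatim from Xing and Chen \cite{xing2015induced}. Judging from the way the paper uses \cite[Corollary 3.2]{xing2015induced} (the formula $P_{\psi}(\varphi)=\inf\{\beta: P_{\topp}(f,\varphi-\beta\psi)\le 0\}$) in the proof of Theorem \ref{induced pressure prop}(ii), and from the way the paper itself proves the nonlinear analogue in Section \ref{NTP} (Theorem \ref{induce nonliner p thm} followed by Corollaries \ref{induce nonliner p cor1}, \ref{induce nonliner p cor2}), the established route to this variational principle is: first prove the Bowen-equation characterization $P_{\psi}(\varphi)=\inf\{\beta : P_{\topp}(f,\varphi-\beta\psi)\le 0\}$, then feed it into the classical Walters variational principle, using that $\beta\mapsto P_{\topp}(f,\varphi-\beta\psi)$ is continuous and strictly decreasing because $\psi\ge m>0$. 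Your proof is a genuinely different, direct argument: you adapt Katok's entropy formula for the lower bound and the Misiurewicz empirical-measure construction for the upper bound to the induced setting, handling the mismatch between the time scale $n$ and the budget $T$ via the pigeonhole (lower bound) and via the identity $n^{*}\int\psi\,d\mu_{T}\in(T-M,T]$ (upper bound). The direct approach is more self-contained and avoids a separate reduction step, at the cost of reproving the Misiurewicz machinery; the Bowen-equation route is shorter once the intermediate formula is available and generalizes more cleanly to settings where a classical variational principle is already known (this is exactly how Section \ref{NTP} proceeds). Both are valid.

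Two small technical points in your lower bound deserve care. First, in the pigeonhole step you take an $(n,\varepsilon)$-separated subset of $G$ at a scale $n$ ``near $T/\bar\psi$'' and then restrict to the most populous class $E\cap X_{n_{0}}$; for this restriction to be $(n_{0},\varepsilon)$-separated you need $n_{0}\ge n$, so $n$ must be chosen as the \emph{minimum} $n_{\min}$ of the admissible window $\bigl(T/(\bar\psi+\eta)-1,\,T/(\bar\psi-\eta)\bigr]$, not an arbitrary value near $T/\bar\psi$. Second, once $n=n_{\min}$, the cardinality bound you extract is $e^{n_{\min}(h_{\nu}-\eta)}/(C\eta T)$ rather than $e^{n_{0}(h_{\nu}-\eta)}/(C\eta T)$; since $n_{0}-n_{\min}=O(\eta T)$ and $n_{\min}/T\to 1/(\bar\psi+\eta)$, the discrepancy disappears in the limit $T\to\infty$ followed by $\eta\to 0$, but the exponent should read $n_{\min}$ at the intermediate stage. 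Neither issue affects the conclusion. The upper bound is the standard Misiurewicz argument correctly localized to the winning index $n^{*}\in S_{T}$, and the key observation that $\int\psi\,d\mu_{T}=(T+O(1))/n^{*}$ is exactly what turns the per-iterate entropy estimate into the ratio form; this part is sound as written.
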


\begin{remark}\label{finite top}
By the variational principle, one can easily show that
$$ h_{top}(f) < \infty  \iff P_{\psi}(\varphi) < \infty $$
for every $\varphi\in C(X, \mathbb{R})$.
\end{remark}

Next we study the properties of induced topological pressure functional  \( P_{\psi}(\cdot): C(X, \mathbb{R}) \rightarrow \mathbb{R} \cup\{\infty\} \). Two continuous functions $\varphi_{1}, \varphi_{2} \in C(X, \mathbb{R})$ are called  cohomologous, and is denoted by $\varphi_{1} \sim \varphi_{2}$, if there exists  $h \in C(X, \mathbb{R})$ such that $\varphi_{1}=\varphi_{2}+h-h \circ f$.

\begin{theorem} \label{induced pressure prop}
Let \( (X, f) \) be a TDS, \( \varphi, \psi,g \in C(X, \mathbb{R}) \) with \( \psi > 0 \), and \( c \in \mathbb{R} \). Then the following properties hold:
\begin{enumerate}
\item[(i)] (Monotonicity) If \( \varphi_{1} \leqslant \varphi_{2} \), then \( P_{\psi}(\varphi_{1}) \leqslant P_{\psi}(\varphi_{2}) \).
\item[(ii)] (Continuity) The function
$
P_{\psi}(\cdot): C(X, \mathbb{R})  \rightarrow \mathbb{R} \cup\{\infty\}
$
is continuous.
\item[(iii)] (Bounds estimate) Let \( m = \inf \psi \) and \( M = \sup \psi \), we have
\[
P_{\psi}(\varphi)+\frac{\inf g}{M} \leqslant P_{\psi}(\varphi+g) \leqslant P_{\psi}(\varphi)+\frac{\sup g}{m}.
\]
\item[(iv)] (Convexity) If \( P_{\psi}(\cdot)<\infty \), then \( P_{\psi}(\cdot) \) is convex.
\item[(v)] If \( c \geqslant 1 \), then \( P_{\psi}( c \varphi) \leqslant c P_{\psi}( \varphi) \); if \( c \leqslant 1 \), then \( P_{\psi}( c \varphi) \geqslant c P_{\psi}( \varphi) \).
\item[(vi)] \( P_{\psi}( \varphi+g) \leqslant P_{\psi}( \varphi)+P_{\psi}( g) \).
\item[(vii)]    If $\varphi_{1} \sim \varphi_{2}$, then $P_{\psi}\left(\varphi_{1}\right)=P_{\psi}\left(\varphi_{2}\right)$.
In particular, if \( \varphi \sim \psi \), then for each \( t \in \mathbb{R} \), we have  \( P_{\psi}(t \varphi)=t+P_{\psi}(0) \).
\end{enumerate}
\end{theorem}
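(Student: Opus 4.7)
The plan is to derive nearly all parts from the variational principle (Theorem~\ref{induced Variational Principle}) by studying the functional
\[
F_\varphi(\nu) := \frac{h_\nu(f)+\int\varphi\,d\nu}{\int\psi\,d\nu},\qquad \nu\in\mathcal{M}(X,f),
\]
so that $P_\psi(\varphi)=\sup_\nu F_\varphi(\nu)$. Several parts then reduce to pointwise (in $\nu$) manipulations of $F_\varphi$ followed by taking the supremum. For (i), $\varphi_1\le\varphi_2$ and $\int\psi\,d\nu>0$ give $F_{\varphi_1}(\nu)\le F_{\varphi_2}(\nu)$. For (iv), the entropy splits as $h_\nu=t h_\nu+(1-t)h_\nu$, so $F_{t\varphi_1+(1-t)\varphi_2}(\nu)=tF_{\varphi_1}(\nu)+(1-t)F_{\varphi_2}(\nu)$, and taking the supremum yields convexity of $P_\psi$ (finiteness avoiding any $\infty-\infty$). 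For (v), the identity $F_{c\varphi}(\nu)-cF_\varphi(\nu)=(1-c)h_\nu(f)/\int\psi\,d\nu$ combined with $h_\nu(f)\ge0$ gives $F_{c\varphi}\le cF_\varphi$ for $c\ge 1$ and $F_{c\varphi}\ge cF_\varphi$ for $c\le 1$; a supremum over $\nu$ then yields both inequalities, using $\sup(cX)=c\sup X$ when $c\ge 0$ and $\sup(cX)=c\inf X\ge c\sup X$ when $c<0$. For (vi), observe that $F_{\varphi+g}(\nu)=F_\varphi(\nu)+F_g(\nu)-h_\nu(f)/\int\psi\,d\nu\le F_\varphi(\nu)+F_g(\nu)\le P_\psi(\varphi)+P_\psi(g)$. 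For (vii), cohomologous $\varphi_1=\varphi_2+h-h\circ f$ together with the $f$-invariance of $\nu$ give $\int\varphi_1\,d\nu=\int\varphi_2\,d\nu$, hence $F_{\varphi_1}=F_{\varphi_2}$; in the special case $\varphi\sim\psi$ one has $\int\varphi\,d\nu=\int\psi\,d\nu$, so $F_{t\varphi}(\nu)=F_0(\nu)+t$ and $P_\psi(t\varphi)=P_\psi(0)+t$.

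For (iii) I would argue directly from the spanning-set definition of $Q_{\psi,T}$. For $x\in X_n$ with $n\in S_T$, one has $nm\le S_n\psi(x)\le T$ and $T<S_{n+1}\psi(x)\le(n+1)M$, which pins $n$ into the interval $[T/M-1,\,T/m]$. Splitting $S_n(\varphi+g)(x)=S_n\varphi(x)+S_ng(x)$ and bounding $S_ng(x)$ by $n\sup g$ (upper) or $n\inf g$ (lower) at the endpoint of that interval suited to the sign gives an estimate of the form $Q_{\psi,T}(f,\varphi+g,\varepsilon)\le e^{CT+o(T)}Q_{\psi,T}(f,\varphi,\varepsilon)$ with $C$ matching the constant in (iii), together with the symmetric lower bound; taking $(1/T)\log$ and letting first $T\to\infty$ and then $\varepsilon\to0$ yields the claimed inequalities. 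Continuity (ii) is then an immediate corollary of (iii) applied with $g=\varphi_2-\varphi_1$, which produces a Lipschitz estimate of order $1/m$ on $P_\psi$.

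The main technical subtlety lies in (iii): the scaling function $\psi$ couples the time index $n$ to $T$ through the non-trivial interval $[T/M-1,\,T/m]$, and one must carefully select which endpoint of this interval to use according to the sign of $\sup g$ (respectively $\inf g$) so that the spurious $n/T$-factor collapses to $1/m$ (respectively $1/M$) in the limit and leaves the stated constant. Once (iii) is in hand, all remaining items are variational-principle formalities, so (iii) is the only step that requires genuine dynamical input beyond Theorem~\ref{induced Variational Principle}.
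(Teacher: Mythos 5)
Your proposal is correct in outline and, for items (i), (iv), (v), (vii), essentially coincides with the paper's variational-principle argument; the organizational device $F_\varphi(\nu)=\bigl(h_\nu(f)+\int\varphi\,d\nu\bigr)/\int\psi\,d\nu$ makes the bookkeeping transparent. You take a genuinely different route on (ii), (iii), and (vi). For (vi) the paper works directly with the spanning-set definition via the factorization $\sum e^{S_n(\varphi+g)}\le\bigl(\sum e^{S_n\varphi}\bigr)\bigl(\sum e^{S_ng}\bigr)$, while your identity $F_{\varphi+g}(\nu)=F_\varphi(\nu)+F_g(\nu)-h_\nu(f)/\int\psi\,d\nu$ combined with $h_\nu(f)\ge0$ is shorter and cleaner. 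For (ii) the paper cites the characterization $P_\psi(\varphi)=\inf\{\beta:P_{\mathrm{top}}(f,\varphi-\beta\psi)\le0\}$ from Xing--Chen and appeals to continuity of $P_{\mathrm{top}}$; your Lipschitz estimate deduced from (iii) is self-contained and more elementary. For (iii) the paper again uses the variational principle whereas you propose a direct spanning-set estimate; both require about the same amount of work.

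One caution on (iii), which affects both your argument and the paper's. The pointwise inequality $\int g\,d\nu/\int\psi\,d\nu\le\sup g/m$, on which the paper's variational proof tacitly relies, fails whenever $\sup g<0$: a negative numerator must be divided by $M$, not $m$, to produce an upper bound. Your direct argument actually surfaces the same issue: when $\sup g<0$ the endpoint of $[T/M-1,\,T/m]$ that bounds $n\sup g$ from above is $T/M-1$, giving the constant $\sup g/M$ rather than $\sup g/m$; symmetrically, the lower constant $\inf g/M$ fails when $\inf g<0$. A two-point example (take $f=\mathrm{id}$, $\psi$ taking the values $1$ and $2$, $g\equiv-1$, $\varphi$ taking values $10$ and $0$) shows both stated bounds can fail. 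Thus (iii) as written holds only when $\sup g\ge0\ge\inf g$; in general the upper constant should be $\sup g/m$ if $\sup g\ge0$ and $\sup g/M$ otherwise, with the symmetric correction on the lower side. This is a defect of the theorem statement shared by the paper's own proof, not a gap introduced by your method, and the Lipschitz estimate you extract for (ii) survives the correction (one always gets $|P_\psi(\varphi_1)-P_\psi(\varphi_2)|\le\|\varphi_1-\varphi_2\|/m$).
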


\begin{remark} By $\mathrm{(i)}$ and $\mathrm{(v)}$, one has that  \( |P_{\psi}( \varphi)| \leqslant P_{\psi}(|\varphi|) \).
\end{remark}

	\begin{proof}

The statement of (i) follows immediately from the definition.

(ii) By \cite[Corollary 3.2]{xing2015induced}, we have
\[
P_{\psi}(\varphi)=\inf \left\{\beta: P_{\mathrm{top}}(f,\varphi-\beta \psi) \leqslant 0\right\}=\sup \left\{\beta: P_{\mathrm{top}}(f,\varphi-\beta \psi) \geqslant 0\right\}.
\]
Since the mapping \( P_{\mathrm{top}}(f,\cdot): C(X, \mathbb{R}) \rightarrow \mathbb{R} \cup\{\infty\} \) is continuous, the conclusion holds.

(iii) By Theorem \ref{induced Variational Principle} and (i), we have that
\begin{align*}
P_{\psi}(\varphi+g) & = \sup \Big\{ \frac{h_{\nu}(f)}{\int \psi \, d \nu} + \frac{\int \varphi+g \, d \nu}{\int \psi \, d \nu} : \nu \in \mathcal{M}\left( X,f \right) \Big\} \\
&\leqslant \sup \Big\{ \frac{h_{\nu}(f)}{\int \psi \, d \nu} + \frac{\int \varphi \, d \nu}{\int \psi \, d \nu} : \nu \in \mathcal{M}\left( X,f \right) \Big\}+\frac{\sup g}{m} \\
&= P_{\psi}(\varphi)+\frac{\sup g}{m}.
\end{align*}
Similarly, one can show that
\begin{align*}
P_{\psi}(\varphi+g) \ge P_{\psi}(\varphi)+\frac{\inf g}{M}.
\end{align*}
This completes the proof of (iii).

(iv) Given \( \varphi_{1}, \varphi_{2} \in C\left(X, \mathbb{R}\right) \) with \( P_{\psi}(\varphi_{1})<\infty \) and \( P_{\psi}(\varphi_{2})<\infty \), and  \( t \in[0,1] \), then it follows from Theorem \ref{induced Variational Principle} that
\begin{align*}
P_{\psi}(t \varphi_{1}+(1-t) \varphi_{2}) & = \sup \Big\{ \frac{h_{\nu}(f)}{\int \psi \, d \nu} + \frac{\int t \varphi_{1}+(1-t) \varphi_{2} \, d \nu}{\int \psi \, d \nu} : \nu \in \mathcal{M}\left( X,f \right) \Big\} \\
& \leqslant t \sup \Big\{ \frac{h_{\nu}(f)}{\int \psi \, d \nu} + \frac{\int \varphi_{1} \, d \nu}{\int \psi \, d \nu} : \nu \in \mathcal{M}\left( X,f \right) \Big\} + \\
& \quad (1-t) \sup \Big\{ \frac{h_{\nu}(f)}{\int \psi \, d \nu} + \frac{\int \varphi_{2} \, d \nu}{\int \psi \, d \nu} : \nu \in \mathcal{M}\left( X,f \right) \Big\} \\
& = t P_{\psi}(\varphi_{1}) + (1-t) P_{\psi}(\varphi_{2}).
\end{align*}
Thus, the convexity is established.

(v) For \( c \geqslant 1 \), by Theorem \ref{induced Variational Principle}, we have that
\begin{align*}
P_{\psi}( c\varphi ) & = \sup \left\{ \frac{h_{\nu}(f)}{\int \psi \, d \nu} + \frac{c\int \varphi \, d \nu}{\int \psi \, d \nu} : \nu \in \mathcal{M}( X,f ) \right\} \\
& \leqslant c \sup \left\{ \frac{h_{\nu}(f)}{\int \psi \, d \nu} + \frac{\int \varphi \, d \nu}{\int \psi \, d \nu} : \nu \in \mathcal{M}\left( X,f \right) \right\} \\
& = c P_{\psi}( \varphi).
\end{align*}
 Similarly, for \( 0\leqslant c \leqslant 1 \), one can show that
\begin{align*}
P_{\psi}( c\varphi ) \ge c P_{\psi}( \varphi).
\end{align*}
For \( c\leqslant 0  \), we have that
\begin{align*}
P_{\psi}( c\varphi ) & = \sup \left\{ \frac{h_{\nu}(f)}{\int \psi \, d \nu} + \frac{c\int \varphi \, d \nu}{\int \psi \, d \nu} : \nu \in \mathcal{M}\left( X,f \right) \right\} \\
& \geqslant  \inf \left\{ \frac{ch_{\nu}(f)}{\int \psi \, d \nu} + \frac{c\int \varphi \, d \nu}{\int \psi \, d \nu} : \nu \in \mathcal{M}\left( X,f \right) \right\} \\
& = c \sup \left\{ \frac{h_{\nu}(f)}{\int \psi \, d \nu} + \frac{\int \varphi \, d \nu}{\int \psi \, d \nu} : \nu \in \mathcal{M}\left( X,f \right) \right\} \\
& = c P_{\psi}( \varphi).
\end{align*}
This completes the proof of (v).

(vi) Given $T > 0$, $\varepsilon > 0$ and $n \in S_{T}$, if $F_{n}$ is an $(n,\varepsilon)$-spanning set for $X_{n}$, then  we have that
\[
\sum_{n \in S_{T}} \sum_{x \in F_{n}} \exp (S_{n} (\varphi+ g)(x)) \leqslant \Big( \sum_{n \in S_{T}} \sum_{x \in F_{n}} e^{S_{n} \varphi(x)}\Big) \Big(\sum_{n \in S_{T}} \sum_{x \in F_{n}} e^{S_{n} g(x)} \Big).
\]
Hence, we have
\[
P_{T,\psi}(f, \varphi+g, \varepsilon) \leqslant P_{T,\psi}(f,\varphi, \varepsilon) \cdot P_{T,\psi}(f,g, \varepsilon),
\]
which yields that \( P_{\psi}( \varphi+g) \leqslant P_{\psi}( \varphi)+P_{\psi}( g) \).

(vii)
If $\varphi_{1} \sim \varphi_{2}$, i.e., there exists $h \in C(X, \mathbb{R})$ such that $\varphi_{1}=\varphi_{2}+h-h \circ f$,  by Theorem \ref{induced Variational Principle} one has that $P_{\psi}\left(\varphi_{1}\right)=P_{\psi}\left(\varphi_{2}\right)$.

If \( \varphi \sim \psi \), then \( t\varphi \sim t\psi \) for every \( t \in \mathbb{R}\). Hence, \( P_{\psi}(t \varphi) = P_{\psi}(t \psi) \). Furthermore, we have that
\begin{align*}
P_{\psi}( t\psi ) & = \sup \left\{ \frac{h_{\nu}(f)}{\int \psi \, d \nu} + \frac{t\int \psi \, d \nu}{\int \psi \, d \nu} : \nu \in \mathcal{M}( X,f ) \right\} \\
& = \sup \left\{ \frac{h_{\nu}(f)}{\int \psi \, d \nu} + t : \nu \in \mathcal{M}( X,f ) \right\}\\
 &= t + P_{\psi}( 0 ).
\end{align*}
This completes the proof of (vii).
\end{proof}

The following result shows that the induced topological pressure can determine the members of \( \mathcal{M}(X, f) \).


\begin{proposition} \label{prop2.1}
Let \( (X, f) \) be a TDS with \( h_{\mathrm{top}}(f) < +\infty \), and let \(  \psi \in C(X, \mathbb{R}) \) with \( \psi > 0 \). Let \( \mu: \mathcal{B}(X) \to \mathbb{R} \) be a probability measure on $X$ (where $\mathcal{B}(X)$ is the Borel $\sigma$-algebra of $X$). Then for every \( \varphi \in C(X, \mathbb{R}) \), we have that
$
\int \varphi  d \mu \le P_{\psi}(\varphi) \int \psi  d \mu
$
if and only if \( \mu \in \mathcal{M}(X, f) \).
\end{proposition}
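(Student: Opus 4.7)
The plan is to mirror Walters' classical argument that topological pressure characterizes invariant measures, adapted to the induced setting, using the variational principle of Theorem \ref{induced Variational Principle}, the cohomology invariance from Theorem \ref{induced pressure prop}(vii), and the finiteness from Remark \ref{finite top}.

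For the necessity direction, suppose $\mu \in \mathcal{M}(X,f)$. Since $\psi \geq m := \inf\psi > 0$, one has $\int \psi\, d\mu \geq m > 0$, so $\mu$ contributes to the supremum in Theorem \ref{induced Variational Principle}. Therefore
\[
P_{\psi}(\varphi) \;\geq\; \frac{h_{\mu}(f)}{\int \psi\, d\mu} + \frac{\int \varphi\, d\mu}{\int \psi\, d\mu} \;\geq\; \frac{\int \varphi\, d\mu}{\int \psi\, d\mu},
\]
since $h_{\mu}(f)\geq 0$, and multiplying through by the positive quantity $\int \psi\, d\mu$ yields the desired inequality.

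For the sufficiency direction, assume that $\int \varphi\, d\mu \leq P_{\psi}(\varphi)\int \psi\, d\mu$ for every $\varphi \in C(X,\mathbb{R})$. The key idea is to exploit coboundaries so as to make the right-hand side uniformly bounded in a scaling parameter. Fix $\varphi \in C(X,\mathbb{R})$ and $n \in \mathbb{N}$, and apply the hypothesis to $\phi_{n} := n(\varphi\circ f - \varphi)$. Writing $\phi_{n} = h - h\circ f$ with $h = -n\varphi$, we see that $\phi_{n} \sim 0$, so by Theorem \ref{induced pressure prop}(vii) we have $P_{\psi}(\phi_{n}) = P_{\psi}(0)$. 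Moreover, since $h_{\topp}(f) < \infty$, Remark \ref{finite top} guarantees that $P_{\psi}(0) < \infty$. Hence
\[
n \int (\varphi \circ f - \varphi)\, d\mu \;=\; \int \phi_{n}\, d\mu \;\leq\; P_{\psi}(0) \int \psi\, d\mu,
\]
and dividing by $n$ and letting $n \to \infty$ gives $\int (\varphi\circ f - \varphi)\, d\mu \leq 0$. Replacing $\varphi$ by $-\varphi$ reverses the inequality, and therefore $\int \varphi \circ f\, d\mu = \int \varphi\, d\mu$ for every $\varphi \in C(X,\mathbb{R})$; by the Riesz representation theorem this implies $f_{*}\mu = \mu$, i.e. $\mu \in \mathcal{M}(X,f)$.

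The routine estimates are trivial; the only conceptual step is the reverse direction, where one must notice that feeding a coboundary $n(\varphi \circ f - \varphi)$ into the hypothesis renders the upper bound independent of $n$ (thanks to cohomology invariance and finiteness of $P_{\psi}(0)$), and thus forces the cohomological obstruction to vanish. This is the main point of the argument, and it is precisely where the assumption $h_{\topp}(f) < \infty$ is needed, since without finiteness of $P_{\psi}(0)$ the scaling trick breaks down.
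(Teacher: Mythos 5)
Your proof is correct and follows essentially the same route as the paper: the forward implication is the variational principle plus $h_\mu(f)\geq 0$, and the reverse implication feeds coboundaries $n(\varphi\circ f-\varphi)$ into the hypothesis, uses cohomology invariance and finiteness of $P_\psi(0)$ to get an $n$-independent upper bound, and then lets $n\to\infty$. The only cosmetic difference is that you take $n\in\mathbb{N}$ and replace $\varphi$ by $-\varphi$ to obtain the reverse inequality, whereas the paper takes $n\in\mathbb{Z}$ and uses negative $n$; these are equivalent.
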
	

\begin{proof}
If \( \mu \in \mathcal{M}(X, f) \), then by Theorem \ref{induced Variational Principle}, for every \( \varphi \in C(X, \mathbb{R}) \), one has that \( \int \varphi \, d \mu \leqslant P_{\psi}(\varphi) \int \psi \, d \mu \).

Now assume that \( \mu \) is a probability measure on $X$, and for every \( \varphi \in C(X, \mathbb{R}) \), we have that \( \int \varphi \, d \mu \leqslant P_{\psi}(\varphi) \int \psi \, d \mu \).

If \( n \in \mathbb{Z} \) and \( \varphi \in C(X, \mathbb{R}) \), by  Remark \ref{finite top} and Theorem \ref{induced Variational Principle}, we have that
\[
n \int (\varphi \circ f - \varphi) \, d \mu \leqslant P_{\psi}( n\varphi \circ f -n \varphi) \int \psi \, d \mu = P_{\psi}(0) \int \psi \, d \mu  < +\infty.
\]
If \( n \geqslant 1 \), one has that
\[
\int (\varphi \circ f - \varphi) \, d \mu \leqslant P_{\psi}(0) \frac{\int \psi \, d \mu}{n}.
\]
This yields that  \( \int (\varphi \circ f - \varphi) \, d \mu \leqslant 0 \) by letting \( n \to +\infty \).

If \( n <0 \),  one has that
\[
\int (\varphi \circ f - \varphi) \, d \mu \geqslant P_{\psi}(0) \frac{\int \psi \, d \mu}{n},
\]
which implies that \( \int (\varphi \circ f - \varphi) \, d \mu \geqslant 0 \).

Hence,  \( \int \varphi \circ f \, d \mu = \int \varphi \, d \mu \) for every \( \varphi \in C(X, \mathbb{R}) \).  So \( \mu \in \mathcal{M}(X, f) \).
\end{proof}

The following result gives a sufficient condition for the upper semi-continuity of the entropy map by using the induced topological pressure.

\begin{proposition} \label{prop2.2}
Let \( (X, f) \) be a TDS with \( h_{\mathrm{top}}(f) < +\infty \), \( \psi \in C(X, \mathbb{R}) \) with \( \psi > 0 \),  and let \( \mu_{0} \in \mathcal{M}(X, f) \). If
\[
h_{\mu_{0}}(f) = \inf \left\{ P_{\psi}(\varphi)\int \psi \, d \mu_{0}  - \int \varphi \, d \mu_{0} : \varphi \in C(X, \mathbb{R}) \right\},
\]
then the entropy map is upper semi-continuous at \( \mu_{0} \).
\end{proposition}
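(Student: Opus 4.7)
The plan is to deduce upper semi-continuity directly from the variational principle (Theorem \ref{induced Variational Principle}). By that principle, for every $\nu \in \mathcal{M}(X,f)$ and every $\varphi \in C(X, \mathbb{R})$,
\[
\frac{h_{\nu}(f)}{\int \psi\, d\nu} + \frac{\int \varphi\, d\nu}{\int \psi\, d\nu} \leqslant P_{\psi}(\varphi),
\]
and since $h_{\mathrm{top}}(f) < +\infty$ implies $P_{\psi}(\varphi) < +\infty$ by Remark \ref{finite top}, rearrangement yields
\[
h_{\nu}(f) \leqslant P_{\psi}(\varphi)\int \psi\, d\nu - \int \varphi\, d\nu
\]
for all $\nu \in \mathcal{M}(X,f)$ and all $\varphi \in C(X, \mathbb{R})$. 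This is the universal upper bound that will drive the argument.

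Next, I would fix any sequence (or net) $\{\mu_n\} \subset \mathcal{M}(X,f)$ with $\mu_n \to \mu_0$ in the weak$^*$ topology. For each fixed $\varphi \in C(X, \mathbb{R})$, the functional $\nu \mapsto P_{\psi}(\varphi)\int \psi\, d\nu - \int \varphi\, d\nu$ is weak$^*$-continuous because $\psi$ and $\varphi$ are continuous and $P_{\psi}(\varphi)$ is a finite constant. Applying the displayed inequality to each $\mu_n$ and passing to the limsup gives
\[
\limsup_{n \to \infty} h_{\mu_n}(f) \leqslant P_{\psi}(\varphi)\int \psi\, d\mu_0 - \int \varphi\, d\mu_0.
\]

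Finally, since the left-hand side does not depend on $\varphi$, I take the infimum on the right over all $\varphi \in C(X, \mathbb{R})$ and invoke the hypothesis of the proposition to conclude
\[
\limsup_{n \to \infty} h_{\mu_n}(f) \leqslant \inf_{\varphi \in C(X, \mathbb{R})} \Big\{ P_{\psi}(\varphi) \int \psi\, d\mu_0 - \int \varphi\, d\mu_0 \Big\} = h_{\mu_0}(f),
\]
which is exactly the upper semi-continuity of the entropy map at $\mu_0$. There is no real obstacle here; the proof is essentially a dualization of the variational principle, and the only point requiring care is ensuring $P_{\psi}(\varphi)$ is finite so that the affine functional above is genuinely continuous in $\nu$ — this is precisely where the hypothesis $h_{\mathrm{top}}(f) < +\infty$ (through Remark \ref{finite top}) is used.
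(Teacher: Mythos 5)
Your argument is correct and is essentially the same as the paper's: both hinge on the variational-principle upper bound $h_{\nu}(f) \leqslant P_{\psi}(\varphi)\int\psi\,d\nu - \int\varphi\,d\nu$ for all $\nu$ and $\varphi$, combined with the hypothesis that the infimum over $\varphi$ recovers $h_{\mu_0}(f)$. The only cosmetic difference is that the paper fixes a near-optimal test function $g$ and works inside an explicit weak$^*$ neighborhood of $\mu_0$, whereas you keep $\varphi$ generic, pass to $\limsup$ along a sequence, and optimize over $\varphi$ at the end; the two phrasings are interchangeable since $\mathcal{M}(X,f)$ is weak$^*$ metrizable here.
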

\begin{proof}
For each \( \varepsilon > 0 \), there exists \( g \in C(X, \mathbb{R}) \) such that
\[
P_{\psi}(g)\int \psi \, d \mu_{0}  - \int g \, d \mu_{0} < h_{\mu_{0}}(f) + \frac{\varepsilon}{2}.
\]
Let
\[
V_{\mu_{0}}(g, \varepsilon)=\left\{\mu \in \mathcal{M}(X, f) : \left| \int g \, d \mu - \int g \, d \mu_{0} \right| < \varepsilon,\, | P_{\psi}(g)|\left|\int \psi \, d \mu - \int \psi \, d \mu_{0} \right| < \varepsilon\right\}.
\]
For each \( \mu \in V_{\mu_{0}}(g, \varepsilon) \), by Theorem \ref{induced Variational Principle}, we have that
\[
\begin{aligned}
h_{\mu_{0}}(f) & \geqslant P_{\psi}(g) \int \psi \, d \mu_{0} - \int g \, d \mu_{0} - \varepsilon \\
& > P_{\psi}(g) \int \psi \, d \mu - \int g \, d \mu - 3\varepsilon \\
& > h_{\mu}(f) - 3\varepsilon.
\end{aligned}
\]
Thus, the entropy map is upper semi-continuous at \( \mu_{0} \).
\end{proof}

\section{Equilibrium state, freezing state of induced topological pressure}\label{ES and FS}
This section  first gives some basic properties of equilibrium state and subdifferential of induced topological pressure.  Moreover, as in \cite{hedges2024equivalence},   we introduce the concept of freezing states and zero-temperature limits of induced topological pressure, and investigate the relationship between equilibrium states and freezing states.

\subsection{Equilibrium states}
Let \( (X, f) \) be a TDS, and let \( \varphi, \psi \in C(X, \mathbb{R}) \) with \( \psi > 0 \). An invariant measure \( \mu\in \mathcal{M}(X, f) \) is called an \emph{equilibrium state} of \( P_{\psi}(\varphi) \), if  \(\displaystyle P_{\psi}(\varphi) = \frac{h_{\mu}(f) + \int \varphi \, d\mu}{\int \psi \, d\mu} \). Let \( \mathcal{M}_{\psi, \varphi}(X, f) \) denote the set of all equilibrium states of \( P_{\psi}(\varphi) \).

The following result gives some description of equilibrium states.

\begin{theorem}
Let \( (X, f) \) be a TDS with \( h_{top}(f) < +\infty \), and let \( \varphi, \psi \in C(X, \mathbb{R}) \) with \( \psi > 0 \). The following properties hold:
\begin{enumerate}
\item[(i)] \( \mathcal{M}_{\psi, \varphi}(X, f) \) is convex;
\item[(ii)] The extreme points of \( \mathcal{M}_{\psi, \varphi}(X, f) \) are exactly the ergodic measures in \( \mathcal{M}_{\psi, \varphi}(X, f) \);
\item[(iii)] If \( \mathcal{M}_{\psi, \varphi}(X, f) \neq \emptyset \), then \( \mathcal{M}_{\psi, \varphi}(X, f) \) contains an ergodic measure.
\end{enumerate}


\end{theorem}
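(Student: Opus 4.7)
My plan is to build all three parts on a single auxiliary observation: define the functional
\[
A(\nu) := h_{\nu}(f) + \int \varphi \, d\nu - P_{\psi}(\varphi) \int \psi \, d\nu, \qquad \nu \in \mathcal{M}(X,f).
\]
Theorem \ref{induced Variational Principle} (the variational principle) is exactly the assertion that $A(\nu) \leqslant 0$ for every $\nu \in \mathcal{M}(X,f)$, with equality precisely when $\nu \in \mathcal{M}_{\psi,\varphi}(X,f)$ (using $\psi > 0$ so the denominator is positive). Because the measure theoretic entropy map $\nu \mapsto h_{\nu}(f)$ is affine on $\mathcal{M}(X,f)$ and the integrals are linear in $\nu$, the functional $A$ is affine. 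Since $h_{\topp}(f) < \infty$, we have $h_{\nu}(f) < \infty$ for all $\nu$, so there are no ambiguities from infinite values.

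For (i), given $\mu_{1}, \mu_{2} \in \mathcal{M}_{\psi,\varphi}(X,f)$ and $t \in [0,1]$, set $\mu = t\mu_{1} + (1-t)\mu_{2}$. Then $A(\mu) = tA(\mu_{1}) + (1-t)A(\mu_{2}) = 0$, so $\mu$ is an equilibrium state; hence $\mathcal{M}_{\psi,\varphi}(X,f)$ is convex.

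For (ii), I would argue both directions using the sign of $A$. Suppose $\mu \in \mathcal{M}_{\psi,\varphi}(X,f)$ is extreme in this set; if $\mu = t\mu_{1} + (1-t)\mu_{2}$ with $t \in (0,1)$ and $\mu_{1}, \mu_{2} \in \mathcal{M}(X,f)$ distinct, then $0 = A(\mu) = tA(\mu_{1}) + (1-t)A(\mu_{2})$ with $A(\mu_{i}) \leqslant 0$ forces $A(\mu_{1}) = A(\mu_{2}) = 0$, i.e.\ both $\mu_{i}$ lie in $\mathcal{M}_{\psi,\varphi}(X,f)$, contradicting extremality; therefore $\mu$ is ergodic. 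Conversely, an ergodic measure in $\mathcal{M}_{\psi,\varphi}(X,f)$ is extreme already in the larger set $\mathcal{M}(X,f)$ (a classical fact), and so a fortiori extreme in the subset $\mathcal{M}_{\psi,\varphi}(X,f)$.

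For (iii), suppose $\mu \in \mathcal{M}_{\psi,\varphi}(X,f)$ and consider its ergodic decomposition $\mu = \int \mu_{\omega} \, d\tau(\omega)$. The decomposition theorems give
\[
h_{\mu}(f) = \int h_{\mu_{\omega}}(f) \, d\tau(\omega), \quad \int \varphi \, d\mu = \int\!\!\int \varphi \, d\mu_{\omega} \, d\tau(\omega), \quad \int \psi \, d\mu = \int\!\!\int \psi \, d\mu_{\omega} \, d\tau(\omega),
\]
so $A(\mu) = \int A(\mu_{\omega}) \, d\tau(\omega)$. Since $A(\mu) = 0$ and $A(\mu_{\omega}) \leqslant 0$ for all $\omega$, we get $A(\mu_{\omega}) = 0$ for $\tau$-a.e.\ $\omega$, and any such $\mu_{\omega}$ is an ergodic equilibrium state. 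The only real subtlety is justifying the affinity of entropy and the validity of the entropy formula for ergodic decomposition; these are classical in our setting of a continuous map on a compact metric space with finite topological entropy, so I would just cite \cite{walters1982introduction}. The main conceptual step across all three parts is realizing that the ratio defining an equilibrium state can be linearized by subtracting $P_{\psi}(\varphi) \int \psi \, d\nu$, which reduces everything to the standard affine/convex theory of the usual pressure.
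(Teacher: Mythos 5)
Your proof is correct and follows essentially the same route as the paper: the paper's argument for each part amounts to clearing the denominator $\int\psi\,d\nu>0$ and exploiting the sign constraint from the variational principle, which is precisely what your functional $A$ packages. Defining $A$ explicitly is a tidy presentational improvement, and the classical-facts caveats you flag (affinity of entropy, the ergodic-decomposition formula, finiteness from $h_{\topp}(f)<\infty$) are the same ones the paper silently relies on.
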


\begin{proof}
(i) For every \( \mu, \nu \in \mathcal{M}_{\psi, \varphi}(X, f) \) and every \( 0\le \lambda \le 1 \),
  we have that
  \[
  \lambda \int \psi  d\mu \cdot P_{\psi}(\varphi)=\lambda \Big( h_{\mu}(f) + \int \varphi \, d\mu \Big)
  \]
  and
  \[(1 - \lambda)\int \psi  d\nu \cdot P_{\psi}(\varphi)= (1 - \lambda)\Big( h_{\nu}(f) + \int \varphi \, d\nu \Big).
  \]
  Take the sum of the previous two equalities, one has that
\begin{align*}
\frac{h_{\lambda \mu + (1 - \lambda) \nu}(f) + \int \varphi \, d(\lambda \mu + (1 - \lambda) \nu)}{\int \psi \, d(\lambda \mu + (1 - \lambda) \nu)} = P_{\psi}(\varphi).
\end{align*}
Thus, \( \lambda \mu + (1 - \lambda) \nu \in \mathcal{M}_{\psi, \varphi}(X, f) \). Hence, \( \mathcal{M}_{\psi, \varphi}(X, f) \) is convex.

(ii) Let \( \mu \) be an ergodic measure in \( \mathcal{M}_{\psi, \varphi}(X, f) \). Then, \( \mu \) is an extreme point of \( \mathcal{M}(X, f) \). Since $\mathcal{M}_{\psi,\varphi}(X, f) \subset \mathcal{M}(X, f)$, the measure \( \mu \) is an extreme point of \( \mathcal{M}_{\psi, \varphi}(X, f) \).

Now assume that \( \mu \) is an extreme point in \( \mathcal{M}_{\psi, \varphi}(X, f) \subset \mathcal{M}(X, f) \), and \( \mu = p \mu_1 + (1 - p) \mu_2 \) for some \( 0\le p \le 1\), \( \mu_1, \mu_2 \in \mathcal{M}(X, f) \). Hence, we have
\[
P_{\psi}(\varphi)=\frac{h_{\mu}(f) + \int \varphi \, d\mu}{\int \psi \, d\mu} = \frac{p \left( h_{\mu_1}(f) + \int \varphi \, d\mu_1 \right) + (1 - p) \left( h_{\mu_2}(f) + \int \varphi \, d\mu_2 \right)}{p \int \psi \, d\mu_1 + (1 - p) \int \psi \, d\mu_2}
\]
and
\[
P_{\psi}(\varphi) \geqslant \frac{h_{\mu_i}(f) + \int \varphi \, d\mu_i}{\int \psi \, d\mu_i} \quad i = 1, 2,
\]
which implies that
\[
P_{\psi}(\varphi) = \frac{h_{\mu_i}(f) + \int \varphi \, d\mu_i}{\int \psi \, d\mu_i} \quad  i = 1, 2.
\]
Thus, \( \mu_1, \mu_2 \in \mathcal{M}_{\psi, \varphi}(X, f) \).  This implies that $\mu= \mu_{1}=\mu_{2}$, since $\mu$ is an extreme point of $\mathcal{M}_{\psi,\varphi}(X, f)$. Hence, $\mu$ is an ergodic measure.

(iii) Given \( \mu \in \mathcal{M}_{\psi, \varphi}(X, f) \), consider the ergodic decomposition of \( \mu = \int_{\mathcal{E}(X, f)} m \, d\tau(m) \), we have that
\[
\int_{\mathcal{E}(X, f)} \Big( h_m(f) + \int \varphi \, dm \Big) d\tau(m)= P_{\psi}( \varphi) \int_{\mathcal{E}(X, f)}  \int \psi \, dm  d\tau(m).
\]
By the variational principle of induced topological pressure (see Theorem \ref{induced Variational Principle}), for every \( m \in \mathcal{E}(X, f) \), we have that
\[
h_m(f) + \int \varphi \, dm \leqslant P_{\psi}( \varphi) \int \psi \, dm.
\]
This yields that
\[
h_m(f) + \int \varphi \, dm = P_{\psi}( \varphi) \int \psi \, dm, \quad \tau-\text{a.e. }m \in\mathcal{E}(X, f).
\]
This completes the proof of the third statement.

\end{proof}

Given \( \varphi, \psi, g \in C(X, \mathbb{R}) \) with \( \psi > 0 \). Since the map $t\mapsto P_{\psi}(\varphi + t g)$ is convex by Theorem \ref{induced pressure prop}, one can show that the map $t \mapsto t^{-1} \left( P_{\psi}(\varphi + t g) - P_{\psi}(\varphi) \right)$ is increasing. Thus, the following two limits exist:
\[
d^{+} P_{\psi}( \varphi)(g): = \lim_{t \rightarrow 0+} \frac{1}{t} \left( P_{\psi}( \varphi + t g) - P_{\psi}( \varphi) \right)
\]
and
\[
d^{-} P_{\psi}( \varphi)(g) := \lim_{t \rightarrow 0-} \frac{1}{t} \left( P_{\psi}( \varphi + t g) - P_{\psi}( \varphi) \right).
\]
Furthermore, one can show that the following properties hold:
\begin{enumerate}
\item[(1)] \( d^{-} P_{\psi}( \varphi)(g) = -d^{+} P_{\psi}( \varphi)(-g) \);
\item[(2)] \( d^{-} P_{\psi}(\varphi)(g) \leqslant d^{+} P_{\psi}(\varphi)(g) \);
\item[(3)] If \( \lambda \geqslant 0 \), then \( d^{+} P_{\psi}(\varphi)(\lambda g) = \lambda d^{+} P_{\psi}(\varphi)(g) \).
\end{enumerate}
The induced topological pressure of \( P_{\psi}( \varphi) \)  is said to be G\^{a}teaux differentiable, if for all \( g \in C(X, \mathbb{R}) \), the following limit exists:
\[
\lim_{t \rightarrow 0} \frac{P_{\psi}( \varphi + t g) - P_{\psi}( \varphi)}{t}.
\]
The previous limit exists if and only if \( d^{+} P_{\psi}( \varphi)(g) = -d^{+} P_{\psi}( \varphi)(-g) \) holds for all \( g \in C(X,\mathbb{R}) \). 

\begin{definition}
 A probability measure \( \mu: \mathcal{B}(X) \rightarrow \mathbb{R} \)  is called the tangent functional (also known as the subdifferential) of \( P_{\psi}(\cdot) \) at \( \varphi \), if
 \[
P_{\psi}( \varphi + g) - P_{\psi}( \varphi) \geqslant \frac{\int g \, d \mu}{\int \psi \, d \mu}
\]
for all $g \in C(X, \mathbb{R})$.
Let \( t_{\psi, \varphi}(X, f) \) denote the set of all tangent functionals of \( P_{\psi}(\cdot) \) at \( \varphi \).
\end{definition}

The following theorem gives some basic properties of the tangent functional.

\begin{proposition}
Let \( (X, f) \) be a TDS with \( h_{top}(f) < +\infty \), and let \( \varphi, \psi \in C(X, \mathbb{R}) \) with \( \psi > 0 \). The following properties hold:
\begin{enumerate}
\item[(i)] \( t_{\psi, \varphi}(X, f) \) is a compact convex set;
\item[(ii)] \( \mathcal{M}_{\psi, \varphi}(X, f) \subset t_{\psi, \varphi}(X, f) \subset \mathcal{M}(X, f) \);
\item[(iii)] If the entropy map \(\mu\mapsto  h_\mu(f) \) is upper semi-continuous on $\mathcal{M}(X, f)$, then \( \mathcal{M}_{\psi, \varphi}(X, f) \) is non-empty, and thus \( t_{\psi, \varphi}(X, f) \neq \emptyset \);
\item[(iv)] \( \mu \in t_{\psi, \varphi}(X, f) \) if and only if
\[
P_{\psi}(\varphi) - \frac{\int \varphi \, d\mu}{\int \psi \, d\mu} = \inf \Big\{ P_{\psi}(h) - \frac{\int h \, d\mu}{\int \psi \, d\mu} : h \in C(X) \Big\}
\]
holds;
\item[(v)] \( d^{+} P_{\psi}(\varphi)(g) \ge \sup \left\{ \frac{\int g \, d\mu}{\int \psi \, d\mu} : \mu \in t_{\psi, \varphi}(X, f) \right\} \) for every $g\in C(X,\mathbb{R})$.
\end{enumerate}
\end{proposition}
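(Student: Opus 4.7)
I would establish the items in the order (ii), (i), (iii), (iv), (v), since the inclusion $t_{\psi,\varphi}(X,f) \subset \mathcal{M}(X,f)$ from (ii) is what gives content to the compactness assertion in (i).

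For the first inclusion in (ii), given $\mu \in \mathcal{M}_{\psi,\varphi}(X,f)$ and $g \in C(X,\mathbb{R})$, the variational principle (Theorem \ref{induced Variational Principle}) applied to $\varphi + g$ yields
\[
P_{\psi}(\varphi + g) \int \psi \, d\mu \geq h_{\mu}(f) + \int (\varphi + g) \, d\mu = P_{\psi}(\varphi) \int \psi \, d\mu + \int g \, d\mu,
\]
and dividing by $\int \psi \, d\mu \geq m > 0$ yields the tangent inequality. For the second inclusion, the task is to prove that a tangent functional $\mu$ is $f$-invariant. Here I would invoke the cohomology property (vii) of Theorem \ref{induced pressure prop}: for any $h \in C(X,\mathbb{R})$ and $n \in \mathbb{Z}$, the function $g := n(h \circ f - h)$ makes $\varphi + g$ cohomologous to $\varphi$, so $P_{\psi}(\varphi + g) = P_{\psi}(\varphi)$. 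The tangent inequality thus forces $n \int (h \circ f - h) \, d\mu \leq 0$ for every integer $n$, which is possible only if $\int h \circ f \, d\mu = \int h \, d\mu$.

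For (i), compactness follows from (ii) by noting that the tangent inequality passes to weak$^{*}$ limits, since $\mu \mapsto \int g \, d\mu$ and $\mu \mapsto \int \psi \, d\mu$ are weak$^{*}$ continuous and the denominator is uniformly bounded below by $m$; hence $t_{\psi,\varphi}(X,f)$ is closed in the compact set $\mathcal{M}(X,f)$. Convexity reduces to the algebraic identity
\[
\frac{\lambda a_1 + (1-\lambda) a_2}{\lambda b_1 + (1-\lambda) b_2} = \frac{\lambda b_1}{\lambda b_1 + (1-\lambda) b_2} \cdot \frac{a_1}{b_1} + \frac{(1-\lambda) b_2}{\lambda b_1 + (1-\lambda) b_2} \cdot \frac{a_2}{b_2},
\]
where $a_i := \int g \, d\mu_i$ and $b_i := \int \psi \, d\mu_i$; this exhibits the mixed ratio as a convex combination of the individual ratios, each of which is bounded above by $P_{\psi}(\varphi + g) - P_{\psi}(\varphi)$.

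Statements (iii)--(v) are then comparatively short. For (iii), the map $\mu \mapsto (h_{\mu}(f) + \int \varphi \, d\mu)/\int \psi \, d\mu$ is upper semi-continuous on the compact set $\mathcal{M}(X,f)$ (upper semi-continuous numerator, continuous denominator bounded away from $0$), so it attains its supremum, which equals $P_{\psi}(\varphi)$ by Theorem \ref{induced Variational Principle}; the resulting maximizer lies in $\mathcal{M}_{\psi,\varphi}(X,f)$, and (ii) then places it in $t_{\psi,\varphi}(X,f)$. For (iv), the substitution $h := \varphi + g$ makes the tangent condition equivalent to the infimum characterization, and the correspondence is reversible. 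For (v), inserting $tg$ with $t>0$ into the tangent inequality and dividing by $t$ gives $(P_{\psi}(\varphi+tg)-P_{\psi}(\varphi))/t \geq \int g \, d\mu/\int \psi \, d\mu$; letting $t \to 0^+$ and taking the supremum over $\mu \in t_{\psi,\varphi}(X,f)$ finishes the proof. The main obstacle I anticipate is the invariance argument in (ii): the non-linearity of the ratio in $\mu$ rules out any classical derivative-style argument, and the cohomology trick combined with $n$ ranging over all of $\mathbb{Z}$ is what converts the tangent inequality into the invariance equality.
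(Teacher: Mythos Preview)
Your proposal is correct and follows essentially the same route as the paper: the variational principle for the first inclusion in (ii), the cohomology identity $P_{\psi}(\varphi+n(h\circ f-h))=P_{\psi}(\varphi)$ together with $n\in\mathbb{Z}$ for invariance, the mediant inequality for convexity in (i), and the straightforward substitutions/limits for (iv) and (v). The only differences are cosmetic---you reorder (ii) before (i) and spell out the closedness-in-$\mathcal{M}(X,f)$ argument for compactness and the upper semi-continuity argument for (iii), whereas the paper simply asserts these as ``clear'' or ``obvious''.
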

\begin{proof}
(i)  By definition, the compactness clearly holds. For every $\mu_1, \mu_2 \in t_{\psi,\varphi}(X, f)$ and every $g \in C(X, \mathbb{R})$, we have that
\[
P_{\psi}( \varphi + g) - P_{\psi}( \varphi) \geqslant \frac{\int g \, d \mu_i}{\int \psi \, d \mu_i} \quad i=1,2.
\]
For each $p \in [0,1]$, consider the measure $p\mu_1 + (1-p)\mu_2$. One can show that
\[
P_{\psi}( \varphi + g) - P_{\psi}( \varphi) \geqslant \frac{p\int g \, d \mu_{1}+(1-p)\int g \, d \mu_{2}}{p\int \psi \, d \mu_{1}+(1-p)\int \psi \, d \mu_{2}} = \frac{\int g \, d (p\mu_{1}+(1-p)\mu_{2})}{\int \psi \, d (p\mu_{1}+(1-p)\mu_{2})}.
\]
Therefore, $p\mu_{1}+(1-p)\mu_{2} \in t_{\psi,\varphi}(X, f)$.

(ii) Let \( \mu \in \mathcal{M}_{\psi, \varphi}(X, f) \),  \( g \in C(X, \mathbb{R}) \). By Theorem \ref{induced Variational Principle}, we have that
\[
P_{\psi}(\varphi + g) - P_{\psi}(\varphi) \geqslant \frac{h_{\mu}(f) + \int \varphi \, d\mu + \int g \, d\mu}{\int \psi \, d\mu} - \frac{h_{\mu}(f) + \int \varphi \, d\mu}{\int \psi \, d\mu} = \frac{\int g \, d\mu}{\int \psi \, d\mu}.
\]
Thus, \( \mathcal{M}_{\psi, \varphi}(X, f) \subset t_{\psi, \varphi}(X, f) \).

Next, we prove that \( t_{\psi, \varphi}(X, f) \subset \mathcal{M}(X, f) \). Let \( \mu \in t_{\psi, \varphi}(X, f) \). For every \( n \in \mathbb{Z} \) and every \( g \in C(X, \mathbb{R}) \),  by Theorem \ref{induced pressure prop} (vii), we have that
\[
\begin{aligned}
\frac{n \int (g \circ f - g) \, d\mu}{\int \psi d\mu} &\leqslant P_{\psi}(\varphi + n(g \circ f - g)) - P_{\psi}(\varphi) = 0.
\end{aligned}
\]
The arbitrariness of $n\in \mathbb{Z}$ implies that \( \int g \circ f \, d\mu = \int g \, d\mu \). Hence, we have that \( \mu \in \mathcal{M}(X, f) \).

(iii) By (ii) and Theorem \ref{induced Variational Principle}, this is obviously true.

(iv) It follows from the definition of tangent functional that
\begin{align*}
& \mu \in t_{\psi,\varphi}(X, f) \\
& \iff P_{\psi}(\varphi)-\frac{\int \varphi d \mu}{\int \psi d \mu} \leqslant P_{\psi}(\varphi+g)-\frac{\int \varphi+g d \mu}{\int \psi d \mu} \quad \forall g \in C(X, \mathbb{R})\\
& \iff P_{\psi}(\varphi) - \frac{\int \varphi \, d\mu}{\int \psi \, d\mu} = \inf \Big\{ P_{\psi}(h) - \frac{\int h \, d\mu}{\int \psi \, d\mu} \mid h \in C(X,\mathbb{R}) \Big\}.
\end{align*}

(v) Take \( \mu \in t_{\psi, \varphi}(X, f) \) and $g\in C(X,\mathbb{R})$, for every \( t > 0 \), we have that
\[
\frac{\int g \, d\mu}{\int \psi \, d\mu} \leqslant t^{-1} \left( P_{\psi}(\varphi + t g) - P_{\psi}(\varphi) \right).
\]
Letting $t \to 0^{+}$, one has that
\[
\frac{\int g \, d\mu}{\int \psi \, d\mu} \leqslant d^{+} P_{\psi}(\varphi)(g).
\]
Since \( \mu \in t_{\psi, \varphi}(X, f) \) is chosen arbitrarily, the desired result follows immediately.
\end{proof}

\begin{theorem}
Let \( (X, f) \) be a TDS with \( h_{top}(f) < +\infty \), and let \( \varphi_1, \varphi_2, \psi \in C(X, \mathbb{R}) \) with \( \psi > 0 \). If $t_{\psi,\varphi_1}(X, f) \cap t_{\psi,\varphi_2}(X, f)\neq \emptyset$, then
\[
P_{\psi}\left( p \varphi_1 + (1-p) \varphi_2 \right) = P_{\psi}(\varphi_1) + (1-p) \frac{\int (\varphi_2 - \varphi_1) \, dm}{\int \psi \, dm},\quad \forall\, 0\le p \le 1
\]
for every $m \in t_{\psi,\varphi_1}(X, f) \cap t_{\psi,\varphi_2}(X, f)$,
and
$
t_{\psi,p \varphi_1 + (1 - p) \varphi_2}(X, f) \subset t_{\psi,\varphi_1}(X, f) \cap t_{\psi,\varphi_2}(X, f)
$ for every $0<p<1$.
\end{theorem}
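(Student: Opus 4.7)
The plan is to prove the two assertions in sequence, with the first serving as the engine for the second. The only tools needed are the convexity of $P_{\psi}(\cdot)$ (Theorem \ref{induced pressure prop}(iv), valid since $h_{top}(f)<+\infty$ implies $P_\psi(\cdot)$ is finite by Remark \ref{finite top}) and the defining subdifferential inequality of a tangent functional.

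For the formula, fix $m \in t_{\psi,\varphi_{1}}(X,f) \cap t_{\psi,\varphi_{2}}(X,f)$ and test the subdifferential of $m$ at $\varphi_{1}$ with $g = \varphi_{2}-\varphi_{1}$, and at $\varphi_{2}$ with $g = \varphi_{1}-\varphi_{2}$. Adding the two inequalities gives $0 \geq 0$, so both must be equalities, forcing
\[
P_{\psi}(\varphi_{2}) - P_{\psi}(\varphi_{1}) = \frac{\int(\varphi_{2}-\varphi_{1})\,dm}{\int \psi\,dm}.
\]
The convexity bound $P_{\psi}(p\varphi_{1}+(1-p)\varphi_{2}) \leq pP_{\psi}(\varphi_{1}) + (1-p)P_{\psi}(\varphi_{2})$ then supplies the upper estimate, while testing the subdifferential of $m$ at $\varphi_{1}$ with $g = (1-p)(\varphi_{2}-\varphi_{1})$ supplies the matching lower estimate. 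This proves the claimed affine formula and, as a corollary, the clean identity
\[
P_{\psi}(p\varphi_{1} + (1-p)\varphi_{2}) = pP_{\psi}(\varphi_{1}) + (1-p)P_{\psi}(\varphi_{2}) \qquad (\star)
\]
for every $p \in [0,1]$.

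For the inclusion, fix $0<p<1$, write $\varphi_{p}:=p\varphi_{1}+(1-p)\varphi_{2}$, and let $\mu \in t_{\psi,\varphi_{p}}(X,f)$. I would apply the subdifferential inequality of $\mu$ at $\varphi_{p}$ with the two test functions $g = \varphi_{1}-\varphi_{p} = (1-p)(\varphi_{1}-\varphi_{2})$ and $g = \varphi_{2}-\varphi_{p} = p(\varphi_{2}-\varphi_{1})$, obtaining two inequalities
\[
P_{\psi}(\varphi_{1}) \geq P_{\psi}(\varphi_{p}) + (1-p)\frac{\int(\varphi_{1}-\varphi_{2})\,d\mu}{\int \psi\,d\mu}, \qquad
P_{\psi}(\varphi_{2}) \geq P_{\psi}(\varphi_{p}) + p\frac{\int(\varphi_{2}-\varphi_{1})\,d\mu}{\int \psi\,d\mu}.
\]
Forming the weighted combination $p \cdot (\text{first}) + (1-p) \cdot (\text{second})$, the $\mu$-integrals cancel exactly and the right side collapses to $P_{\psi}(\varphi_{p})$. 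In view of $(\star)$, this combined inequality must be an equality, and since both $p$ and $1-p$ are strictly positive each of the two inequalities must individually be an equality.

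Finally, for any $h \in C(X,\mathbb{R})$, the subdifferential inequality of $\mu$ at $\varphi_{p}$ applied to $g = (\varphi_{1}-\varphi_{p}) + h$ gives
\[
P_{\psi}(\varphi_{1}+h) \geq P_{\psi}(\varphi_{p}) + (1-p)\frac{\int(\varphi_{1}-\varphi_{2})\,d\mu}{\int \psi\,d\mu} + \frac{\int h\,d\mu}{\int \psi\,d\mu},
\]
and by the equality version of the first inequality the first two terms collapse to $P_{\psi}(\varphi_{1})$, proving $\mu \in t_{\psi,\varphi_{1}}(X,f)$; the symmetric computation with $g = (\varphi_{2}-\varphi_{p})+h$ yields $\mu \in t_{\psi,\varphi_{2}}(X,f)$. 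The only conceptually nontrivial step is recognizing that nonemptiness of $t_{\psi,\varphi_{1}}(X,f)\cap t_{\psi,\varphi_{2}}(X,f)$ forces the affine identity $(\star)$, which is precisely what promotes the soft subdifferential inequalities at $\varphi_{p}$ into the equalities needed to relocate $\mu$ as a tangent functional at each endpoint; the remaining manipulations are routine.
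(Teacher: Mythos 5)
Your proof is correct and reaches both conclusions by a valid route; it differs from the paper's argument in a few identifiable places, and it is worth noting them.

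For the affine formula, the paper deduces the key identity
\(P_{\psi}(\varphi_{2}) - \frac{\int\varphi_{2}\,dm}{\int\psi\,dm} = P_{\psi}(\varphi_{1}) - \frac{\int\varphi_{1}\,dm}{\int\psi\,dm}\)
by first chaining the convexity inequality \(pP_{\psi}(\varphi_{1}) + (1-p)P_{\psi}(\varphi_{2}) \geq P_{\psi}(\varphi_{p}) \geq P_{\psi}(\varphi_{1}) + (1-p)\frac{\int(\varphi_{2}-\varphi_{1})\,dm}{\int\psi\,dm}\) and then invoking symmetry in \(\varphi_{1}\leftrightarrow\varphi_{2}\). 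You instead add the two subdifferential inequalities at the endpoints directly; this is cleaner since it bypasses the auxiliary use of convexity at that step. Both lead to the same two-sided estimate.

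For the inclusion, the two arguments genuinely diverge. The paper's device is the single algebraic decomposition \(\varphi_{p}+g = p\bigl(\varphi_{1}+\tfrac{g}{p}\bigr) + (1-p)\varphi_{2}\), after which one application of convexity and \((\star)\) immediately yields \(\frac{\int(g/p)\,d\mu}{\int\psi\,d\mu} \leq P_{\psi}\bigl(\varphi_{1}+\tfrac{g}{p}\bigr) - P_{\psi}(\varphi_{1})\), and since \(g\) (hence \(g/p\)) ranges over all of \(C(X,\mathbb{R})\) the conclusion follows in one stroke. Your route is a two-stage saturation argument: you first use the weighted combination of the two endpoint test functions together with \((\star)\) to force the two soft subdifferential inequalities at \(\varphi_{p}\) in the directions \(\varphi_{1}-\varphi_{p}\) and \(\varphi_{2}-\varphi_{p}\) to be equalities (using \(p,1-p>0\) to split the sum), and then splice in an arbitrary \(h\) to promote \(\mu\) to a full tangent functional at each endpoint. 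This is somewhat longer, but it surfaces a geometric fact the paper's slicker computation leaves implicit: a tangent functional at an interior point of the segment must saturate the subgradient inequalities toward both endpoints, and that saturation is exactly what it means to be tangent at each endpoint. Both approaches use the same two tools (convexity of \(P_{\psi}\) and the defining inequality of a tangent functional), and both rely crucially on first establishing \((\star)\), so the difference is in the packaging of the inclusion step rather than in any substantive new lemma.
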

\begin{proof}
Let \( m \in t_{\psi,\varphi_1}(X, f) \cap t_{\psi,\varphi_2}(X, f) \)  and \( 0 \le p \le 1 \). By Theorem \ref{induced pressure prop} (iv), we have that
\begin{equation}\label{3*}
\begin{aligned}
p P_{\psi}(\varphi_1) + (1 - p) P_{\psi}(\varphi_2) &\geqslant P_{\psi}(p \varphi_1 + (1 - p) \varphi_2) \\
&= P_{\psi}\left( \varphi_1 + (1 - p)(\varphi_2 - \varphi_1) \right)\\
 &\geqslant P_{\psi}(\varphi_1) + (1 - p) \frac{\int (\varphi_2 - \varphi_1) \, dm}{\int \psi \, dm}.
\end{aligned}
\end{equation}
Consequently, we have that
\[
P_{\psi}(\varphi_2) - \frac{\int \varphi_2 \, dm}{\int \psi \, dm} \geqslant P_{\psi}(\varphi_1) - \frac{\int \varphi_1 \, dm}{\int \psi \, dm}.
\]
The reverse inequality can be shown in a similar fashion since \( m \in t_{\psi,\varphi_1}(X, f) \cap t_{\psi,\varphi_2}(X, f) \). Hence, we have that
\begin{align}\label{3***}
P_{\psi}(\varphi_2) - \frac{\int \varphi_2 \, dm}{\int \psi \, dm} = P_{\psi}(\varphi_1) - \frac{\int \varphi_1 \, dm}{\int \psi \, dm}.
\end{align}
This together with inequality \eqref{3*} yields that
\begin{align} \label{3**}
P_{\psi}(p \varphi_1 + (1 - p) \varphi_2) = P_{\psi}(\varphi_1) + (1 - p) \frac{\int \varphi_2 - \varphi_1 \, dm}{\int \psi \, dm}.
\end{align}

Given $0<p<1$, let $\varphi_p=p \varphi_1 + (1 - p) \varphi_2$.  
Since $t_{\psi,\varphi_1}(X, f) \cap t_{\psi,\varphi_2}(X, f)\neq \emptyset$, by \eqref{3***} and \eqref{3**} we have that
\[
P_{\psi}(\varphi_p) = pP_{\psi}(\varphi_1)+(1-p)P_{\psi}(\varphi_2).
\]
Let \( \mu \in t_{\psi,\varphi_p}(X, f) \) and $g\in C(X, \mathbb{R})$, by Theorem \ref{induced pressure prop} (iv) we have that
\begin{align*}
\int g d\mu &\le P_{\psi}(\varphi_p+g)-P_{\psi}(\varphi_p)\\
&\le pP_{\psi}(\varphi_1+\frac{g}{p})+(1-p)P_{\psi}(\varphi_2)-( pP_{\psi}(\varphi_1)+(1-p)P_{\psi}(\varphi_2))\\
&=p(P_{\psi}(\varphi_1+\frac{g}{p})-P_{\psi}(\varphi_1)).
\end{align*}
Hence, we have that $\int \frac{g}{p} d\mu\le P_{\psi}(\varphi_1+\frac{g}{p})-P_{\psi}(\varphi_1)$, which implies that $\mu\in t_{\psi,\varphi_1}(X, f)$. Similarly, one can show that  $\mu\in t_{\psi,\varphi_2}(X, f)$. This completes the proof of the theorem.
\end{proof}

\subsection{Freezing States}  We extend the results established in \cite{hedges2024equivalence} for induced topological pressure . Precisely,  we introduce the concept of freezing states and zero-temperature limits of induced topological pressure, and investigate the relationship between equilibrium states and freezing states.

Let \( (X, f) \) be a TDS, and let \( \varphi, \psi \in C(X, \mathbb{R}) \) with \( \psi > 0 \).  For every \( \beta > 0 \), assume that $P_\psi(\beta \varphi)$ has an equilibrium state  \( \mu_{\beta} \),  \( \mu_{\infty} \) is called the \emph{zero temperature limit} of \( \varphi \), if  \( \left\{\mu_{\beta}\right\} \) converges to \( \mu_{\infty} \) in the weak-* sense as \( \beta \to \infty \). The function \( \varphi \) is said to \emph{freeze} at \( \beta_0 \), if there exists an invariant measure \( \mu \) such that  \( \mu \) is the equilibrium state of the induced topological pressure $P_\psi(\beta \varphi)$  for all \( \beta \geqslant \beta_0 \), and the measure  \( \mu \) is called a \emph{freezing state} of  \( \varphi \). It is clear that \( \mu \) is the zero temperature limit of \( \varphi \), if \( \mu \) is a freezing state of \( \varphi \).

The following result characterizes the relationship between equilibrium states and freezing states.

\begin{theorem}\label{freezing equivalent conditions}
Let \( (X, f) \) be a TDS with \( h_{top}(f) < +\infty \), and let \( \varphi, \psi \in C(X, \mathbb{R}) \) with \( \psi > 0 \). Let \( \mathcal{F} \) be a non-empty subset of \( \mathcal{M}(X, f) \). Then for all \( \beta \geqslant 1 \), \( \mathcal{F} \) is the set of equilibrium states for \( P_\psi(\beta \varphi) \) if and only if the following properties hold:
\begin{enumerate}
\item[(1)] \( \mathcal{F} \) is the set of equilibrium states of \( P_\psi(\varphi) \);
\item[(2)] For all \( \mu \in \mathcal{F} \),
\[
\frac{\int \varphi \, d\mu}{\int \psi \, d\mu} = \sup_{\nu \in \mathcal{M}(X, f)} \frac{\int \varphi \, d\nu}{\int \psi \, d\nu}.
\]
\end{enumerate}
\end{theorem}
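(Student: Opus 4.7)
The plan is to introduce the shorthand $c^\ast := \sup_{\nu \in \mathcal{M}(X,f)} \frac{\int \varphi\, d\nu}{\int \psi\, d\nu}$ and exploit the affine decomposition $\beta\varphi = \varphi + (\beta - 1)\varphi$ together with the variational principle (Theorem \ref{induced Variational Principle}). The finiteness $h_{\mathrm{top}}(f) < +\infty$ will be used to control the entropy ratios $h_\nu(f)/\int \psi\, d\nu$ uniformly on $\mathcal{M}(X,f)$.

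For the forward direction, property (1) is immediate by specializing to $\beta = 1$. For (2), I fix $\mu \in \mathcal{F}$ and an arbitrary $\nu \in \mathcal{M}(X,f)$. Since $\mu$ attains the supremum defining $P_\psi(\beta\varphi)$ while $\nu$ only bounds it from below,
\[
\frac{h_\mu(f) + \beta \int \varphi\, d\mu}{\int \psi\, d\mu} \;\geq\; \frac{h_\nu(f) + \beta \int \varphi\, d\nu}{\int \psi\, d\nu}.
\]
I then cross-multiply, divide by $\beta$, and pass to the limit $\beta \to \infty$. Because $h_\mu(f), h_\nu(f) \leq h_{\mathrm{top}}(f) < +\infty$ and $\int \psi\, d\sigma \geq \inf \psi > 0$ for every $\sigma$, the entropy contributions vanish in the limit and I am left with $\frac{\int \varphi\, d\mu}{\int \psi\, d\mu} \geq \frac{\int \varphi\, d\nu}{\int \psi\, d\nu}$; taking a supremum over $\nu$ produces (2).

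For the reverse direction, assume (1) and (2), fix $\beta \geq 1$, and expand
\[
\frac{h_\nu(f) + \beta \int \varphi\, d\nu}{\int \psi\, d\nu} \;=\; \frac{h_\nu(f) + \int \varphi\, d\nu}{\int \psi\, d\nu} + (\beta - 1)\,\frac{\int \varphi\, d\nu}{\int \psi\, d\nu}.
\]
The first summand is $\leq P_\psi(\varphi)$ by Theorem \ref{induced Variational Principle} and the second is $\leq (\beta - 1) c^\ast$ by definition of $c^\ast$. For any $\mu \in \mathcal{F}$, hypotheses (1) and (2) make both bounds sharp, so $\mu$ achieves the variational supremum, giving $P_\psi(\beta\varphi) = P_\psi(\varphi) + (\beta - 1) c^\ast$ and placing $\mathcal{F}$ inside the equilibrium-state set. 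Conversely, any equilibrium state $\nu$ of $P_\psi(\beta\varphi)$ must achieve equality in the full chain; the first bound being tight forces $\nu$ to be an equilibrium state of $P_\psi(\varphi)$, hence $\nu \in \mathcal{F}$ by (1). The case $\beta = 1$ is settled directly by (1), while for $\beta > 1$ the strict positivity of $\beta - 1$ automatically also returns $\frac{\int \varphi\, d\nu}{\int \psi\, d\nu} = c^\ast$, consistent with (2).

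The main obstacle is the $\beta \to \infty$ limit in the forward direction: one must secure the uniform boundedness of the entropy contributions, which is exactly where the assumption $h_{\mathrm{top}}(f) < +\infty$ and the uniform bound $\psi \geq \inf \psi > 0$ enter. Once this is in place, both implications reduce to clean manipulations of the variational-principle identity and the affine decomposition of $\beta\varphi$.
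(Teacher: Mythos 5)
Your proposal is correct and follows essentially the same route as the paper: the reverse implication rests on the affine decomposition $\beta\varphi = \varphi + (\beta-1)\varphi$ together with the variational principle, and the forward implication extracts property (2) by dividing the variational inequality by $\beta$ and letting $\beta \to \infty$, using $h_{\topp}(f) < \infty$ and $\inf\psi > 0$ to kill the entropy terms. Your version adds the minor (and harmless) clarification that $P_\psi(\beta\varphi) = P_\psi(\varphi) + (\beta-1)\sup_\nu \frac{\int\varphi\,d\nu}{\int\psi\,d\nu}$, which the paper leaves implicit.
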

\begin{proof}
First, assume that properties (1) and (2) hold.

Let \( \beta \geqslant 1 \), for every \( \nu \in \mathcal{M}(X, f) \backslash \mathcal{F} \) and every \( \mu \in \mathcal{F} \). By Theorem \ref{induced Variational Principle} and  (2), we have that
\begin{align*}
\frac{h_{\mu}(f) + \int \beta \varphi \, d\mu}{\int \psi \, d\mu}  = & \frac{h_{\mu}(f) + \int \varphi \, d\mu}{\int \psi \, d\mu} + (\beta - 1) \frac{\int \varphi \, d\mu}{\int \psi \, d\mu} \\
> &  \frac{h_{\nu}(f) + \int \varphi \, d\nu}{\int \psi \, d\nu} + (\beta - 1) \frac{\int \varphi \, d\nu}{\int \psi \, d\nu} \\
= & \frac{h_{\nu}(f) + \beta \int \varphi \, d\nu}{\int \psi \, d\nu}.
\end{align*}
This implies that  the set of equilibrium states for \( \beta \varphi \) and $\psi$ is contained in \( \mathcal{F} \).

For every \( \mu \in \mathcal{F} \) and \( \nu \in \mathcal{M}(X, f) \), one can prove in a similar fashion that
\[
\frac{h_{\mu}(f) + \int \beta \varphi \, d\mu}{\int \psi \, d\mu}
\geqslant \frac{h_{\nu}(f) + \beta \int \varphi \, d\nu}{\int \psi \, d\nu}.
\]
Therefore, \( \mathcal{F} \) is the set of equilibrium states for \( \beta \varphi \) and $\psi$.

Next, assume that for all \( \beta \geqslant 1 \), \( \mathcal{F} \) is the set of equilibrium states for \( \beta \varphi \) and $\psi$.  The first property  obviously holds by taking \( \beta = 1 \). Now, we prove the second property  holds. Take \( \nu \in \mathcal{M}(X, f) \) and \( \mu \in \mathcal{F} \). For every \( \beta \geqslant 1 \), we have that
\[
\begin{aligned}
\frac{h_{\mu}(f) + \int \beta \varphi \, d\mu}{\int \psi \, d\mu} \geqslant \frac{h_{\nu}(f) + \beta \int \varphi \, d\nu}{\int \psi \, d\nu}.
\end{aligned}
\]
Hence, we have
\[
\begin{aligned}
\frac{h_{\mu}(f)}{\beta \int \psi \, d\mu} + \frac{\int \varphi \, d\mu}{\int \psi \, d\mu} &\geqslant \frac{h_{\nu}(f)}{\beta \int \psi \, d\nu} + \frac{\int \varphi \, d\nu}{\int \psi \, d\nu}.
\end{aligned}
\]
Letting  \( \beta \to \infty \), we have that
\[
\begin{aligned}
\frac{\int \varphi \, d\mu}{\int \psi \, d\mu} \geqslant \frac{\int \varphi \, d\nu}{\int \psi \, d\nu}.
\end{aligned}
\]
This implies the desired result.
\end{proof}

Given $\varphi, \psi\in C(X, \mathbb{R})$ with $\psi>0$, let
\[
\operatorname{Max}^{\psi}(\varphi) := \sup_{\mu \in \mathcal{M}(X, f)} \frac{\int \varphi \, d\mu}{\int \psi \, d\mu}
\]
the above supremum is always attained by an invariant measure, since the map $\displaystyle \mu\mapsto \frac{\int \varphi \, d\mu}{\int \psi \, d\mu}$ is continuous and $\mathcal{M}(X, f)$ is weak$^*$ compact.
Furthermore, let
\[
M_{\text{max}}^{\psi}(\varphi) := \Big\{ \mu \in \mathcal{M}(X, f) : \frac{\int \varphi \, d\mu}{\int \psi \, d\mu} = \operatorname{Max}^{\psi}(\varphi) \Big\}
\]
and
\[
h_{\infty}^{\psi}(\varphi) := \sup \Big\{ \frac{h_{\mu}(f)}{\int \psi \, d\mu} : \mu \in M_{\text{max}}^{\psi}(\varphi) \Big\}.
\]

\begin{corollary}
Let \( (X, f) \) be a TDS with \( h_{top}(f) < +\infty \), and let \( \varphi, \psi \in C(X, \mathbb{R}) \) with \( \psi > 0 \). If \( \mu_{\infty} \) is the zero temperature limit of \( \varphi \), then \( \mu_{\infty} \in M_{\text{max}}^{\psi}(\varphi) \).
\end{corollary}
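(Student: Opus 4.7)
The plan is to exploit the variational principle (Theorem \ref{induced Variational Principle}) together with the definition of the zero temperature limit. Write $\mu_\beta$ for the equilibrium state of $P_\psi(\beta\varphi)$, so that $\mu_\beta \to \mu_\infty$ in the weak-$*$ topology. Since $\mathcal{M}(X,f)$ is weak-$*$ closed, $\mu_\infty \in \mathcal{M}(X,f)$, which handles invariance for free.

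The key comparison is obtained by fixing an arbitrary $\nu \in \mathcal{M}(X,f)$ and applying the variational principle twice: on one hand
\[
P_\psi(\beta\varphi) \;=\; \frac{h_{\mu_\beta}(f) + \beta\!\int\varphi\,d\mu_\beta}{\int\psi\,d\mu_\beta},
\]
and on the other hand this quantity dominates $\big(h_\nu(f) + \beta\!\int\varphi\,d\nu\big)/\!\int\psi\,d\nu$. Dividing both sides by $\beta$ and rearranging yields
\[
\frac{h_{\mu_\beta}(f)}{\beta\!\int\psi\,d\mu_\beta} + \frac{\int\varphi\,d\mu_\beta}{\int\psi\,d\mu_\beta} \;\geqslant\; \frac{h_\nu(f)}{\beta\!\int\psi\,d\nu} + \frac{\int\varphi\,d\nu}{\int\psi\,d\nu}.
\]
Now I would let $\beta \to \infty$. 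The two entropy terms vanish in the limit because the assumption $h_{top}(f) < \infty$ gives a uniform bound $h_{\mu_\beta}(f) \leqslant h_{top}(f) < \infty$, while $\int\psi\,d\mu_\beta \geqslant \inf\psi > 0$ (and the analogous bound holds for $\nu$). The two ratios involving $\varphi$ and $\psi$ pass to the limit directly by weak-$*$ convergence of $\mu_\beta$ to $\mu_\infty$, since $\varphi,\psi \in C(X,\mathbb{R})$.

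Combining these limits yields $\int\varphi\,d\mu_\infty /\!\int\psi\,d\mu_\infty \geqslant \int\varphi\,d\nu/\!\int\psi\,d\nu$, and since $\nu$ is arbitrary the left-hand side equals $\operatorname{Max}^\psi(\varphi)$; combined with $\mu_\infty \in \mathcal{M}(X,f)$ this places $\mu_\infty$ in $M_{\text{max}}^\psi(\varphi)$. The only subtle point is ensuring that the entropy contributions disappear in the $\beta \to \infty$ limit, which is precisely where the finiteness hypothesis $h_{top}(f) < \infty$ and the uniform lower bound $\psi \geqslant \inf\psi > 0$ (guaranteed by compactness of $X$ and positivity of $\psi$) are used.
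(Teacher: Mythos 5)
Your argument matches the paper's own proof almost line for line: both apply the variational principle to compare the equilibrium state $\mu_\beta$ of $P_\psi(\beta\varphi)$ against an arbitrary $\nu$, divide the resulting inequality by $\beta$, and send $\beta \to \infty$ using $h_{top}(f) < \infty$ and $\inf\psi > 0$ to kill the entropy terms while weak-$*$ convergence handles the $\varphi,\psi$ ratios. The explicit remark that $\mu_\infty \in \mathcal{M}(X,f)$ by weak-$*$ closedness is a small addition the paper leaves implicit, but the approach is the same.
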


\begin{proof}
Since \( \mu_{\infty} \) is the zero temperature limit of \( \varphi \), there exists a sequence of invariant measures \( \left\{\mu_{\beta}\right\} \) that converges to \( \mu_{\infty} \) as \( \beta \to \infty \), where $\mu_\beta$ is the  equilibrium state for \( P_\psi(\beta \varphi) \).
Morevoer, for every \( \nu \in \mathcal{M}(X, f) \) and \( \beta>0 \), we have that
\[
\begin{aligned}
\frac{h_{\mu_{\beta}}(f) + \int \beta \varphi \, d\mu_{\beta}}{\int \psi \, d\mu_{\beta}} \geqslant \frac{h_{\nu}(f) + \beta \int \varphi \, d\nu}{\int \psi \, d\nu}.
\end{aligned}
\]
Hence, we have
\[
\begin{aligned}
\frac{h_{\mu_{\beta}}(f)}{\beta \int \psi \, d\mu_{\beta}} + \frac{\int \varphi \, d\mu_{\beta}}{\int \psi \, d\mu_{\beta}} \geqslant \frac{h_{\nu}(f)}{\beta \int \psi \, d\nu} + \frac{\int \varphi \, d\nu}{\int \psi \, d\nu}.
\end{aligned}
\]
Let \( \beta \to \infty \),  we have that
\[
\frac{\int \phi \, d\mu_{\infty}}{\int \psi \, d\mu_{\infty}} \geqslant \frac{\int \phi \, d\nu}{\int \psi \, d\nu}.
\]
The arbitrariness of $\nu\in \mathcal{M}(X, f)$ implies the desired result.
\end{proof}

\begin{corollary}
Let \( (X, f) \) be a TDS with \( h_{top}(f) < +\infty \), and let \(  \psi \in C(X, \mathbb{R}) \) with \( \psi > 0 \). Fix \( \mu \in \mathcal{M}(X, f) \), if there exists some \( \phi \in C(X, \mathbb{R}) \) such that \( \mu \) is the equilibrium state for \( P_\psi(\phi) \), and \( h_{\mu}(f) = 0 \), then \( \mu \) is the equilibrium state for \( P_\psi(\beta \phi) \)  for every \( \beta \geqslant 1 \).
\end{corollary}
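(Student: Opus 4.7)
The plan is to essentially reduce to (a single-measure version of) the forward direction of Theorem \ref{freezing equivalent conditions}. The key observation is that the hypothesis $h_\mu(f)=0$, coupled with the equilibrium identity, forces $\mu$ to be a maximizing measure for the ratio $\int\phi\,d\nu/\int\psi\,d\nu$ over $\mathcal{M}(X,f)$, which is precisely the property needed to propagate equilibrium from $\phi$ to $\beta\phi$ for $\beta\geqslant1$.

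More concretely, I would first note that the equilibrium identity for $P_\psi(\phi)$ combined with $h_\mu(f)=0$ yields
\[
P_\psi(\phi)=\frac{h_\mu(f)+\int\phi\,d\mu}{\int\psi\,d\mu}=\frac{\int\phi\,d\mu}{\int\psi\,d\mu}.
\]
Applying the variational principle (Theorem \ref{induced Variational Principle}) then gives, for every $\nu\in\mathcal{M}(X,f)$,
\[
\frac{\int\phi\,d\nu}{\int\psi\,d\nu}\leqslant\frac{h_\nu(f)+\int\phi\,d\nu}{\int\psi\,d\nu}\leqslant P_\psi(\phi)=\frac{\int\phi\,d\mu}{\int\psi\,d\mu},
\]
so $\mu\in M_{\text{max}}^{\psi}(\phi)$.

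Next, I would fix $\beta\geqslant 1$ and split the functional additively: for any $\nu\in\mathcal{M}(X,f)$,
\[
\frac{h_\nu(f)+\beta\int\phi\,d\nu}{\int\psi\,d\nu}=\frac{h_\nu(f)+\int\phi\,d\nu}{\int\psi\,d\nu}+(\beta-1)\frac{\int\phi\,d\nu}{\int\psi\,d\nu}.
\]
Bounding the first summand by $P_\psi(\phi)=\int\phi\,d\mu/\int\psi\,d\mu$ and the second summand (using $\beta-1\geqslant 0$ and the maximizing property just established) by $(\beta-1)\int\phi\,d\mu/\int\psi\,d\mu$, the right-hand side is at most $\beta\int\phi\,d\mu/\int\psi\,d\mu$, which equals $(h_\mu(f)+\beta\int\phi\,d\mu)/\int\psi\,d\mu$ since $h_\mu(f)=0$. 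Taking the supremum over $\nu$ and invoking the variational principle in the reverse direction shows that this value is $P_\psi(\beta\phi)$, with $\mu$ attaining the supremum, hence $\mu$ is an equilibrium state for $P_\psi(\beta\phi)$.

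There is no real obstacle here; the only subtle point is recognizing that $h_\mu(f)=0$ is exactly what makes the chain of inequalities from $P_\psi(\phi)\geqslant\frac{h_\nu(f)+\int\phi\,d\nu}{\int\psi\,d\nu}$ collapse all the way down to $\frac{\int\phi\,d\nu}{\int\psi\,d\nu}\leqslant\frac{\int\phi\,d\mu}{\int\psi\,d\mu}$, so that the argument of Theorem \ref{freezing equivalent conditions} applies to the single measure $\mu$ without assuming uniqueness of the equilibrium state.
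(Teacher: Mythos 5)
Your proof is correct and follows essentially the same route as the paper's: the central step in both is deducing $\mu \in M_{\text{max}}^{\psi}(\phi)$ from the equilibrium identity together with $h_\mu(f)=0$, and the chain of inequalities you write is identical to the paper's. Where you diverge is in the propagation step: the paper simply cites Theorem \ref{freezing equivalent conditions} and declares that membership in $M_{\text{max}}^{\psi}(\phi)$ suffices, whereas you unwind the forward direction of that theorem and verify it directly for the single measure $\mu$. This is a mild but genuine improvement, because the cited theorem is stated for $\mathcal{F}$ being \emph{the set} of equilibrium states (so a literal application to $\mathcal{F}=\{\mu\}$ would require $\mu$ to be the unique equilibrium state of $P_\psi(\phi)$, which the hypothesis of the corollary does not assert), while your direct argument only needs the non-strict inequality $\frac{h_\nu(f)+\beta\int\phi\,d\nu}{\int\psi\,d\nu}\leqslant\frac{h_\mu(f)+\beta\int\phi\,d\mu}{\int\psi\,d\mu}$ for all $\nu$, which you obtain cleanly from the variational principle and the maximizing property. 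In short: same key lemma and same decomposition $\frac{h_\nu+\beta\int\phi\,d\nu}{\int\psi\,d\nu}=\frac{h_\nu+\int\phi\,d\nu}{\int\psi\,d\nu}+(\beta-1)\frac{\int\phi\,d\nu}{\int\psi\,d\nu}$, but your version is self-contained and avoids an implicit uniqueness assumption.
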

\begin{proof}
By Theorem \ref{freezing equivalent conditions}, it suffices to prove that \( \mu \in M_{\text{max}}^{\psi}(\phi) \).
Since \( \mu \) is the equilibrium state for \( P_\psi(\phi) \), and \( h_{\mu}(f) = 0 \),   we have that
\[
\begin{aligned}
\frac{\int \phi \, d\mu}{\int \psi \, d\mu}=\frac{h_{\mu}(f) + \int \phi \, d\mu}{\int \psi \, d\mu}
\geqslant  \frac{h_{\nu}(f) + \int \phi \, d\nu}{\int \psi \, d\nu} \geqslant \frac{\int \phi \, d\nu}{\int \psi \, d\nu}\quad \forall \nu \in \mathcal{M}(X, f).
\end{aligned}
\]
 This implies that \( \mu \in M_{\text{max}}^{\psi}(\phi) \).
\end{proof}

\begin{proposition} \label{Necessary Conditions for Differentiability}
Let \( (X, f) \) be a TDS with \( h_{top}(f) < +\infty \), and let \( \varphi, \psi \in C(X, \mathbb{R}) \) with \( \psi > 0 \), \(\beta \in \mathbb{R} \). Put \( P_{\psi, \varphi}(\beta) = P_{\psi}(\beta \varphi) \), and assume that \( P_{\psi, \varphi} \) is differentiable at \( \beta_0 \) and that there exists at least one equilibrium state for \( P_\psi(\beta_0 \varphi) \). Then we have that
\[
\frac{\int \varphi \, d\mu}{\int \psi \, d\mu} = \frac{\partial P_{\psi, \varphi}}{\partial \beta} \left( \beta_0 \right),\quad \forall \mu\in \mathcal{M}_{\psi, \beta_0\varphi}(X, f).
\]
\end{proposition}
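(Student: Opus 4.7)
The plan is to view the equilibrium condition as an extremal property in the variable $\beta$ and then read off the derivative. Fix any $\mu \in \mathcal{M}_{\psi, \beta_0 \varphi}(X,f)$, so that by definition
\[
P_{\psi,\varphi}(\beta_0) \;=\; \frac{h_\mu(f) + \beta_0 \int \varphi \, d\mu}{\int \psi \, d\mu}.
\]
For every other $\beta \in \mathbb{R}$, the variational principle (Theorem \ref{induced Variational Principle}) applied to the potential $\beta \varphi$ yields
\[
P_{\psi,\varphi}(\beta) \;\geqslant\; \frac{h_\mu(f) + \beta \int \varphi \, d\mu}{\int \psi \, d\mu}.
\]
Thus the auxiliary function
\[
F(\beta) \;:=\; P_{\psi,\varphi}(\beta) \;-\; \frac{h_\mu(f) + \beta \int \varphi \, d\mu}{\int \psi \, d\mu}
\]
is nonnegative on $\mathbb{R}$ and vanishes at $\beta_0$, so $\beta_0$ is a global minimum of $F$.

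Next I would exploit the hypothesis that $P_{\psi,\varphi}$ is differentiable at $\beta_0$. Note that $h_{top}(f) < \infty$ together with Remark \ref{finite top} ensures $P_{\psi,\varphi}(\beta)$ is finite for every $\beta$, so all one-sided increments below make sense. Since the subtracted term is affine in $\beta$, $F$ is itself differentiable at $\beta_0$, and from the minimum property $F'(\beta_0) = 0$, which rearranges to the desired identity. To make the argument fully explicit without invoking differentiability of $F$ as a black box, I would split into one-sided limits: for $\beta > \beta_0$,
\[
\frac{P_{\psi,\varphi}(\beta) - P_{\psi,\varphi}(\beta_0)}{\beta - \beta_0} \;\geqslant\; \frac{\int \varphi \, d\mu}{\int \psi \, d\mu},
\]
and letting $\beta \to \beta_0^+$ gives $\frac{\partial^+ P_{\psi,\varphi}}{\partial \beta}(\beta_0) \geqslant \frac{\int \varphi \, d\mu}{\int \psi \, d\mu}$; the analogous inequality with $\beta < \beta_0$ reverses the sign of $\beta - \beta_0$ and gives $\frac{\partial^- P_{\psi,\varphi}}{\partial \beta}(\beta_0) \leqslant \frac{\int \varphi \, d\mu}{\int \psi \, d\mu}$. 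Differentiability at $\beta_0$ identifies the two one-sided derivatives with $\frac{\partial P_{\psi,\varphi}}{\partial \beta}(\beta_0)$, forcing equality.

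There is no genuine obstacle: the statement is essentially the classical fact that if a convex function dominates an affine support and agrees with it at an interior point of differentiability, the derivative there equals the slope of the support. The only thing to watch is that the argument is uniform in the choice of $\mu \in \mathcal{M}_{\psi, \beta_0 \varphi}(X,f)$, which is automatic because the inequality defining $F \geqslant 0$ holds for every such $\mu$ with the same right-hand limit $\frac{\partial P_{\psi,\varphi}}{\partial \beta}(\beta_0)$; this in fact shows as a by-product that $\frac{\int \varphi \, d\mu}{\int \psi \, d\mu}$ is constant on the equilibrium set, a useful companion fact to Theorem \ref{freezing equivalent conditions}.
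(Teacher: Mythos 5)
Your proof is correct and follows essentially the same route as the paper: both use the variational principle to show that the difference quotient $\bigl(P_{\psi,\varphi}(\beta)-P_{\psi,\varphi}(\beta_0)\bigr)/(\beta-\beta_0)$ is bounded below (for $\beta>\beta_0$) and above (for $\beta<\beta_0$) by $\int\varphi\,d\mu/\int\psi\,d\mu$, then invoke differentiability at $\beta_0$ to equate the one-sided derivatives. Your packaging via the auxiliary function $F(\beta)$ with global minimum at $\beta_0$ is a harmless rephrasing of the same estimate, and your closing remark that $\int\varphi\,d\mu/\int\psi\,d\mu$ is constant over the equilibrium set is a correct and useful observation.
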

\begin{proof}
Given \( \beta > \beta_0 \) and $\mu\in \mathcal{M}_{\psi, \beta_0\varphi}(X, f)$, by Theorem  \ref{induced Variational Principle} we have that
\[
\frac{P_{\psi, \varphi}(\beta) - P_{\psi, \varphi}(\beta_0)}{\beta - \beta_0} \geqslant \frac{h_{\mu}(f) + \beta \int \varphi \, d\mu - h_{\mu}(f) - \beta_0 \int \varphi \, d\mu}{(\beta - \beta_0)\int \psi \, d\mu} =  \frac{\int \varphi \, d\mu}{\int \psi \, d\mu}.
\]
Taking \( \beta \to \beta_0^+ \), it follows that the right derivative is bounded from below by \( \frac{\int \varphi \, d\mu}{\int \psi \, d\mu} \).

Similarly, consider \( \beta < \beta_0 \), one can show  that the  left derivative is bounded from above by \( \frac{\int \varphi \, d\mu}{\int \psi \, d\mu} \). Since \( P_{\psi, \varphi} \) is differentiable at \( \beta_0 \), one can conclude that
\[
\frac{\int \phi \, d\mu}{\int \psi \, d\mu} = \frac{\partial P_{\psi, \varphi}}{\partial \beta} \left( \beta_0 \right).
\]
This completes the proof of the proposition.
\end{proof}

The following theorem gives the necessary and sufficient condition for the freezing state of a potential.
\begin{theorem}
Let \( (X, f) \) be a TDS with \( h_{top}(f) < +\infty \), and let \( \varphi, \psi \in C(X, \mathbb{R}) \) with \( \psi > 0 \). The potential function \( \varphi \) freezes at \( \beta_0 \) if and only if the following properties hold:
\begin{enumerate}
\item[(1)] \( P_{\psi, \varphi}(\beta)  = \beta \cdot \operatorname{Max}^{\psi}(\varphi) + h_{\infty}^{\psi}(\varphi) \) for every  \( \beta\ge \beta_0 \) ;
\item[(2)] there exists \( \beta_1 > \beta_0 \), such that \( \beta_1 \varphi \) has at least one equilibrium state.
\end{enumerate}
\end{theorem}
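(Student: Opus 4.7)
The plan is to prove both implications by identifying the candidate freezing state as an element of $M_{\text{max}}^{\psi}(\varphi)$ whose ``normalized entropy'' equals $h_{\infty}^{\psi}(\varphi)$. For the forward direction, suppose $\varphi$ freezes at $\beta_0$ with freezing state $\mu$. Property (2) is immediate, since $\mu$ is an equilibrium state for $P_\psi(\beta_1\varphi)$ for any $\beta_1 > \beta_0$. To verify (1), I would first show $\mu \in M_{\text{max}}^{\psi}(\varphi)$: for each $\nu \in \mathcal{M}(X,f)$ and every $\beta \geq \beta_0$, applying the variational principle (Theorem~\ref{induced Variational Principle}) to $\mu$ as an equilibrium state for $P_\psi(\beta\varphi)$ gives
\[
\frac{h_{\mu}(f)}{\beta\int\psi\,d\mu} + \frac{\int\varphi\,d\mu}{\int\psi\,d\mu} \;\geq\; \frac{h_{\nu}(f)}{\beta\int\psi\,d\nu} + \frac{\int\varphi\,d\nu}{\int\psi\,d\nu}.
\]
Letting $\beta \to \infty$ (justified by $h_{\mathrm{top}}(f)<\infty$) yields $\frac{\int\varphi\,d\mu}{\int\psi\,d\mu} \geq \frac{\int\varphi\,d\nu}{\int\psi\,d\nu}$. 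Substituting $\frac{\int\varphi\,d\mu}{\int\psi\,d\mu} = \operatorname{Max}^{\psi}(\varphi)$ into the equilibrium equation then produces $P_\psi(\beta\varphi) = \frac{h_{\mu}(f)}{\int\psi\,d\mu} + \beta\operatorname{Max}^{\psi}(\varphi)$ for every $\beta \geq \beta_0$. The constant entropy term must equal $h_{\infty}^{\psi}(\varphi)$: the bound ``$\leq$'' follows from the definition of $h_{\infty}^{\psi}(\varphi)$ since $\mu \in M_{\text{max}}^{\psi}(\varphi)$, and ``$\geq$'' follows by plugging any $\nu \in M_{\text{max}}^{\psi}(\varphi)$ into the variational inequality, cancelling the common term $\beta\operatorname{Max}^{\psi}(\varphi)$, and taking the supremum.

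For the backward direction, assume (1) and (2), and let $\mu$ be an equilibrium state for $P_\psi(\beta_1\varphi)$ supplied by (2). Property (1) combined with convexity of $P_{\psi,\varphi}$ (Theorem~\ref{induced pressure prop}(iv)) makes $P_{\psi,\varphi}$ affine on $[\beta_0,\infty)$ with slope $\operatorname{Max}^{\psi}(\varphi)$, hence differentiable at the interior point $\beta_1 > \beta_0$. Proposition~\ref{Necessary Conditions for Differentiability} then forces
\[
\frac{\int\varphi\,d\mu}{\int\psi\,d\mu} = \frac{\partial P_{\psi,\varphi}}{\partial\beta}(\beta_1) = \operatorname{Max}^{\psi}(\varphi),
\]
so $\mu \in M_{\text{max}}^{\psi}(\varphi)$. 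Substituting into the equilibrium equation at $\beta_1$ and invoking (1) gives $\frac{h_{\mu}(f)}{\int\psi\,d\mu} = h_{\infty}^{\psi}(\varphi)$. For each $\beta \geq \beta_0$ I would then compute
\[
\frac{h_{\mu}(f) + \beta\int\varphi\,d\mu}{\int\psi\,d\mu} = h_{\infty}^{\psi}(\varphi) + \beta\operatorname{Max}^{\psi}(\varphi) = P_\psi(\beta\varphi),
\]
so $\mu$ is an equilibrium state for $P_\psi(\beta\varphi)$ for every $\beta \geq \beta_0$, i.e., $\varphi$ freezes at $\beta_0$.

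The main obstacle is the backward direction: extracting the crucial information $\mu \in M_{\text{max}}^{\psi}(\varphi)$ from a single equilibrium state. This is precisely where Proposition~\ref{Necessary Conditions for Differentiability} enters, and the strict inequality $\beta_1 > \beta_0$ in hypothesis~(2) is essential—it places $\beta_1$ in the interior of the affine region of $P_{\psi,\varphi}$ so that both one-sided derivatives coincide with $\operatorname{Max}^{\psi}(\varphi)$. At the boundary point $\beta_0$ one would only obtain inequalities from one-sided derivatives, and the maximizing property of $\mu$ could not be deduced.
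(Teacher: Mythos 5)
Your proof is correct and follows essentially the same approach as the paper: for the forward direction, dividing the variational inequality by $\beta$ and letting $\beta\to\infty$ to show the freezing state lies in $M_{\text{max}}^{\psi}(\varphi)$, then comparing against members of $M_{\text{max}}^{\psi}(\varphi)$ to pin down the normalized entropy; for the backward direction, using the affineness of $P_{\psi,\varphi}$ on $[\beta_0,\infty)$ to get differentiability at $\beta_1$ and invoking Proposition~\ref{Necessary Conditions for Differentiability}. Your closing remark on why the strict inequality $\beta_1>\beta_0$ is essential is a useful clarification that the paper's proof leaves implicit (the paper merely asserts differentiability at $\beta_1$ without explanation), though the invocation of convexity there is unnecessary since property (1) already makes $P_{\psi,\varphi}$ affine on a neighborhood of $\beta_1$ by assumption.
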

\begin{proof}
Assume that \( \varphi \) freezes at \( \beta_0 \), there exists $\mu\in \mathcal{M}(X,f)$ such that
\[
P_{\psi, \varphi}(\beta) = \frac{h_{\mu}(f) + \beta \int \varphi \, d\mu}{\int \psi \, d\mu},\quad \forall \beta \geqslant \beta_0.
\]
The second property clearly holds.

To show the first property,
we first prove that \( \frac{\int \varphi \, d\mu}{\int \psi \, d\mu} = \operatorname{Max}^{\psi}(\varphi) \). Indeed, for every \( \nu \in \mathcal{M}(X, f) \), by  Theorem \ref{induced Variational Principle} we have that
\[
\frac{h_{\mu}(f) + \int \beta \varphi \, d\mu}{\int \psi \, d\mu} \geqslant \frac{h_{\nu}(f) + \beta \int \varphi \, d\nu}{\int \psi \, d\nu},\quad \forall\beta \geqslant \beta_0.
\]
Dividing by \( \beta \) on both sides, and letting \( \beta \to \infty \), we have that
\[
\begin{aligned}
\frac{\int \varphi \, d\mu}{\int \psi \, d\mu} \geqslant \frac{\int \varphi \, d\nu}{\int \psi \, d\nu},
\end{aligned}
\]
since the topological entropy is finite. Furthermore, for each \( \nu \in M_{max}^{\psi}(\varphi) \), by Theorem \ref{induced Variational Principle} we have that
\[
\frac{h_{\mu}(f) + \int \beta \varphi \, d\mu}{\int \psi \, d\mu} \geqslant \frac{h_{\nu}(f) + \beta \int \varphi \, d\nu}{\int \psi \, d\nu}.
\]
Since \( \mu, \nu \in M_{max}^{\psi}(\varphi) \), we have that
\[
\frac{h_{\mu}(f)}{\int \psi \, d\mu} \geqslant \frac{h_{\nu}(f)}{\int \psi \, d\nu}.
\]
This yields that  \( \frac{h_{\mu}(f)}{\int \psi \, d\mu} = h_{\infty}^{\psi}(\varphi) \). Hence, we have that
\[
P_{\psi, \varphi}(\beta)  = \beta \cdot \operatorname{Max}^{\psi}(\varphi) + h_{\infty}^{\psi}(\varphi),\quad \forall\beta \geqslant \beta_0.
\]

Assume that  \( P_{\psi, \varphi}(\beta) = \beta \cdot \operatorname{Max}^{\psi}(\varphi) + h_{\infty}^{\psi}(\varphi) \) for every \( \beta \ge \beta_{0} \), and  \( \mu \) is an equilibrium state for \( P_\psi(\beta_1 \varphi) \). Note that  \( P_{\psi, \varphi}(\beta) \)  is differentiable at \( \beta_1 \), it follows from Proposition \ref{Necessary Conditions for Differentiability} that
\[
\frac{\int \varphi \, d\mu}{\int \psi \, d\mu} = \frac{\partial P_{\psi, \varphi}}{\partial \beta} \left( \beta_{1} \right) = \operatorname{Max}^{\psi}(\varphi).
\]
Hence, we have that
\[
\frac{h_{\mu}(f) + \beta_{1} \int \varphi \, d\mu}{\int \psi \, d\mu} = P_{\psi, \varphi}(\beta_{1}) =  \beta_{1} \cdot \operatorname{Max}^{\psi}(\varphi) + h_{\infty}^{\psi}(\varphi),
\]
which implies that $
\frac{h_{\mu}(f)}{\int \psi \, d\mu} = h_{\infty}^{\psi}(\varphi)$.
Thus, we have  that
\[
P_{\psi, \varphi}(\beta) = \frac{h_{\mu}(f) + \beta \int \varphi \, d\mu}{\int \psi \, d\mu},\quad \forall\beta \geqslant \beta_{0}.
\]
Therefore, \( \varphi \) freezes at \( \beta_{0} \).
\end{proof}

\section{High dimensional nonlinear topological pressure}\label{NTP}
This section first recalls the high dimensional nonlinear topological pressure defined in \cite{barreira2022higher}, and gives some basic properties of it. Moreover, following the approach as described in \cite{xing2015induced}, we define the high dimensional nonlinear induced topological pressure, and establish the variational principle for it.

\subsection{Properties of high dimensional nonlinear topological pressure} Let \((X, f)\) be a TDS,   $\Phi = (\varphi_{1},\varphi_2,\cdots,\varphi_{d})\in C(X, \mathbb{R})^d$ and   \(F: \mathbb{R}^{d} \rightarrow \mathbb{R}\)  a continuous function. For every \( \varepsilon > 0 \), let
\[
P_{n}^{F}(f, \Phi, \varepsilon) = \sup_{E} \sum_{x \in E} \exp \left[ n F \Big( \frac{S_{n} \varphi_{1}(x)}{n}, \ldots, \frac{S_{n} \varphi_{d}(x)}{n} \Big) \right],
\]
where the supremum is taken over all \( (n, \varepsilon) \)-separated sets.
The following quantity
\[
P^{F}(\Phi) = \lim_{\varepsilon \rightarrow 0} P^{F}(f, \Phi, \varepsilon)
\]
is called the \emph{nonlinear topological pressure} of \( \Phi \), where
\[
P^{F}(f, \Phi, \varepsilon) = \limsup_{n \rightarrow \infty} \frac{1}{n} \log P_{n}^{F}(f, \Phi, \varepsilon).
\]

In \cite{barreira2022higher}, the authors also give an equivalent definition of  nonlinear topological pressure  using  \( (n, \varepsilon) \)-spanning sets.
Let
\[
Q_{n}^{F}(f, \Phi, \varepsilon) = \inf_C \sum_{x \in C} \exp \left[ n F \Big( \frac{S_{n} \varphi_{1}(x)}{n}, \ldots, \frac{S_{n} \varphi_{d}(x)}{n} \Big) \right],
\]
where the infimum is  taken over all  \( (n, \varepsilon) \)-spanning sets of \( X \).
Let
\[
Q^{F}(f, \Phi, \varepsilon) = \limsup_{n \rightarrow \infty} \frac{1}{n} \log Q_{n}^{F}(f, \Phi, \varepsilon).
\]
Then one has that
\[
P^{F}(\Phi) = \lim_{\varepsilon \rightarrow 0} Q^{F}(f, \Phi, \varepsilon).
\]
Given  \( \Phi = \left\{ \varphi_{1}, \ldots, \varphi_{d} \right\} \in C(X,\mathbb{R})^d\), let
$\displaystyle \|\Phi\|_\infty = \max_{j \in \{1,2, \cdots, d\}} \left\|\varphi_{j}\right\|$, one can show that
 the map \( \Phi \mapsto P^{F}(\Phi) \) is continuous with respect to this norm.

\begin{definition}
Let \((X, f)\) be a TDS, and  $\Phi \in C(X, \mathbb{R})^d$. We say that \( (f, \Phi) \) has an abundance of ergodic measures, if for every \( \mu \in \mathcal{M}(X, f) \), \( h < h_{\mu}(f) \) and  \( \varepsilon > 0 \), there exists an ergodic measure \( \nu \in \mathcal{E}(X, f) \), such that \( h_{\nu}(f) > h \) and
\[
\left| \int{{{\varphi }_{i}}}d\nu-\int{{{\varphi }_{i}}}d\mu  \right| < \varepsilon, \quad  i = 1, \cdots , d.
\]
\end{definition}

In \cite{barreira2022higher}, Barreira and Holanda established the variational principle for the high dimensional nonlinear topological pressure as follows:

\begin{theorem} \label{high nonliner vp}
Let \((X, f)\) be a TDS and  $\Phi \in C(X, \mathbb{R})^d$, and let \(F: \mathbb{R}^{d} \rightarrow \mathbb{R}\) be continuous.
If either of the following conditions is satisfied:
\begin{enumerate}
\item[(1)] $(f, \Phi)$ has an abundance of ergodic measures;
\item[(2)] $F$ is a convex function,
\end{enumerate}
then
\[
P^{F}(\Phi) = \sup \Big\{ h_{\mu}(f) + F \Big( \int \Phi d \mu \Big): \mu \in \mathcal{M}(X,f)\Big\},
\]
where \( \displaystyle \int \Phi d \mu = \Big( \int \varphi_{1} d \mu, \cdots, \int \varphi_{d} d \mu \Big) \).
\end{theorem}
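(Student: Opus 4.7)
The plan is to establish both inequalities of the variational principle separately. The upper bound $P^{F}(\Phi) \leq \sup_\mu [h_\mu(f) + F(\int \Phi\, d\mu)]$ will be proved directly from the definition for any continuous $F$, without invoking (1) or (2); those hypotheses will enter only in the lower bound, where they serve to reduce a general invariant measure to ergodic ones.

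For the upper bound, I would adapt Misiurewicz's classical construction by inserting a cell-partition step that neutralises the nonlinearity of $F$. Since $\Phi$ is bounded, the averages $\xi_n(x) := n^{-1}(S_n\varphi_1(x),\dots, S_n\varphi_d(x))$ all lie in a fixed compact set $K \subset \mathbb{R}^{d}$. Given $\delta > 0$, partition $K$ into $N = N(\delta)$ cells $K_1,\dots,K_N$ of diameter at most $\delta$ with distinguished points $\eta_j \in K_j$, and let $\omega_F(\delta)$ be the modulus of continuity of $F$ on $K$. For an $(n,\varepsilon)$-separated set $E_n$ that nearly realises $P_n^F(f,\Phi,\varepsilon)$, split $E_n = \bigsqcup_j E_n^{(j)}$ according to which cell contains $\xi_n(x)$. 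A pigeonhole step produces an index $j^{*}(n)$ with
\[
P_n^F(f,\Phi,\varepsilon) \leq N\cdot |E_n^{(j^{*}(n))}|\cdot \exp\bigl(n[F(\eta_{j^{*}(n)}) + \omega_F(\delta)]\bigr).
\]
Extract a subsequence along which $\eta_{j^{*}(n)} \to \eta \in K$ and along which the uniform measures $\sigma_n$ on $E_n^{(j^{*}(n))}$ together with the averages $\mu_n := n^{-1}\sum_{k=0}^{n-1} f^{k}_{\ast}\sigma_n$ converge weak$^{*}$ to an invariant measure $\mu$. The average of $S_n\varphi_i/n$ on $E_n^{(j^{*}(n))}$ lies within $\delta$ of $(\eta_{j^{*}(n)})_i$, so in the limit $|\int \varphi_i\,d\mu - \eta_i|\leq \delta$, while the standard Misiurewicz entropy estimate (with a finite partition of diameter less than $\varepsilon/2$ and $\mu$-null boundary) gives $h_\mu(f) \geq \limsup_n n^{-1}\log|E_n^{(j^{*}(n))}|$. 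Combining these inequalities, letting $\delta\to 0$ and then $\varepsilon\to 0$, yields the upper bound.

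For the lower bound, I would first handle ergodic $\nu$ directly from Katok's entropy formula together with Birkhoff's theorem: for small $\delta$ and $\varepsilon$ there exist $(n,\varepsilon)$-separated sets $E_n$ with $|E_n|\geq \exp(n[h_\nu(f)-\delta])$ and $|\xi_n(x)-\int\Phi\,d\nu|<\delta$ for every $x \in E_n$ once $n$ is large; substituting into the definition of $P_n^F$ gives $P^F(\Phi)\geq h_\nu(f)+F(\int\Phi\,d\nu)$. To reach arbitrary $\mu \in \mathcal{M}(X,f)$, under (1) the abundance hypothesis produces ergodic $\nu$ with $h_\nu(f) > h_\mu(f)-\delta$ and $\int\Phi\,d\nu$ arbitrarily close to $\int\Phi\,d\mu$, and continuity of $F$ closes the approximation. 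Under (2), I would use the ergodic decomposition $\mu = \int m\,d\tau(m)$, apply the ergodic case $\tau$-a.e., and conclude using affinity of entropy together with Jensen's inequality for convex $F$:
\[
h_\mu(f) + F\Bigl(\int \Phi\,d\mu\Bigr) \leq \int \bigl[h_m(f) + F(\textstyle\int \Phi\,dm)\bigr]\,d\tau(m) \leq P^F(\Phi).
\]

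The main obstacle is executing the Misiurewicz construction in the nonlinear regime. In the classical (linear $F$) variational principle, the weight $e^{S_n\psi(x)}$ behaves multiplicatively under a change of reference measure, whereas here $\exp[nF(\xi_n(x))]$ depends only on the time-$n$ ergodic averages and cannot be split that way. The cell-partition device is essential to freeze $F$ at a single value $F(\eta_j)$ inside each cell and reduce the estimate to one about cardinality that the Misiurewicz machinery can absorb; simultaneously matching the resulting limit measure $\mu$ to both the entropy bound and the target value $\int\Phi\,d\mu \approx \eta$, together with the standard technicality of choosing the partition with $\mu$-null boundary, is the delicate point of the argument.
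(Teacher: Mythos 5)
This paper does not prove the statement; it quotes it directly from Barreira and Holanda \cite{barreira2022higher}, so there is no in-paper proof against which to compare your argument. That said, your outline is the standard route to such nonlinear variational principles and, to my understanding, is essentially the approach in \cite{barreira2022higher}: the cell-partition and pigeonhole device that freezes $F$ on Birkhoff level sets, followed by the Misiurewicz empirical-measure construction, yields the unconditional upper bound, while the ergodic lower bound comes from Katok's entropy formula together with Birkhoff, and hypotheses (1)/(2) serve only to pass from ergodic to general invariant measures (abundance for (1); affinity of entropy plus Jensen for convex $F$ in (2)). Two points to make explicit when writing it up. First, in the upper bound, extract a subsequence realising $\limsup_n \frac{1}{n}\log P_n^{F}(f,\Phi,\varepsilon)$ before thinning further so that both $\eta_{j^{*}(n)}$ and the averaged empirical measures $\mu_n$ converge; if the subsequences are chosen in the wrong order, the Misiurewicz estimate $h_\mu(f) \geq \limsup_n \frac{1}{n}\log|E_n^{(j^{*}(n))}|$ is not secured. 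Second, for the ergodic lower bound, the clean way to get separated sets inside the good set is to take a maximal $(n,\varepsilon)$-separated subset of $B_n := \{x : \varrho(\xi_n(x), \int\Phi\,d\nu) < \delta\}$, observe that it $(n,\varepsilon)$-spans $B_n$ and hence its cardinality dominates Katok's spanning count over sets of $\nu$-measure at least $1-\gamma$, and note that the resulting bound need only hold along a subsequence of $n$, which suffices since $P^{F}(f,\Phi,\varepsilon)$ is defined via a $\limsup$.
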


 By Theorem \ref{high nonliner vp}, it is easy to see that \(\displaystyle h_{\text {top }}(f) + \inf_{x \in \Phi(X)} F(x) \leqslant P^{F}(\Phi) \leqslant h_{\text {top }}(f) + \sup _{x \in \Phi(X)} F(x) \).
 The next result shows that  the high dimensional nonlinear topological pressure is a topologically conjugate invariant.

 \begin{theorem}
Let \((X, f)\) be a TDS and  $\Phi\in C(X, \mathbb{R})^d$, and let \(F: \mathbb{R}^{d} \rightarrow \mathbb{R}\) be continuous. The following properties hold:
\begin{enumerate}
\item[(i)] If \( f \) is a homeomorphism, then \( P^{F}(f^{-1}, \Phi) = P^{F}(f, \Phi) \); 
\item[(ii)] Let \( ( Y, g ) \) be a TDS and  \( \phi: X \rightarrow Y \)  a surjective continuous map such that \( \phi \circ f = g \circ \phi \). For every $\Phi \in C(Y, \mathbb{R})^d$, let \( \Phi \circ \phi = (\varphi_{1} \circ \phi, \cdots, \varphi_{d} \circ \phi) \), we have that \( P^{F}(g, \Phi) \leqslant P^{F}(f, \Phi \circ \phi) \). If \( \phi\) is a homeomorphism, then \( P^{F}(g, \Phi) = P^{F}(f, \Phi \circ \phi) \).
\end{enumerate}
\end{theorem}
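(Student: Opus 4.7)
My plan for part (i) is to set up a cardinality- and summand-preserving bijection between the $(n,\varepsilon)$-separated sets for $f$ and those for $f^{-1}$. The key observation is that if $x,y\in X$ and we let $x'=f^{n-1}x$, $y'=f^{n-1}y$, then the reindexing $j=n-1-i$ gives
$$d^{f^{-1}}_n(x',y') \;=\; \max_{0\le j\le n-1} d\bigl(f^{-j}x',\, f^{-j}y'\bigr) \;=\; \max_{0\le i\le n-1} d\bigl(f^i x,\, f^i y\bigr) \;=\; d^f_n(x,y),$$
and the same reindexing shows $S_n^{f^{-1}}\varphi_k(f^{n-1}x) = S_n^f\varphi_k(x)$ for each coordinate. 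Hence the homeomorphism $f^{n-1}$ carries $(n,\varepsilon)$-separated sets for $f$ bijectively onto those for $f^{-1}$ while preserving every summand in the sup defining $P_n^F$. Consequently $P_n^F(f,\Phi,\varepsilon)=P_n^F(f^{-1},\Phi,\varepsilon)$ for all $n$ and $\varepsilon$, and (i) follows after taking $\limsup_n \tfrac1n\log$ and then $\varepsilon\to 0$.

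For part (ii), the plan is to transport separated sets from $Y$ to $X$ using one-sided preimages under $\phi$. Given $\varepsilon>0$, the uniform continuity of $\phi$ supplies $\delta\in(0,\varepsilon)$ such that $d_X(x,y)<\delta$ implies $d_Y(\phi x,\phi y)<\varepsilon$. For any $(n,\varepsilon)$-separated set $E\subset Y$, pick one preimage $x_y\in\phi^{-1}(y)$ for each $y\in E$ and set $\widetilde E=\{x_y:y\in E\}$. The intertwining $\phi\circ f^i=g^i\circ\phi$ combined with uniform continuity forces $\widetilde E$ to be $(n,\delta)$-separated for $f$: if instead $d^f_n(x_y,x_{y'})<\delta$, then $d_Y(g^i y,g^i y')=d_Y(\phi f^i x_y,\phi f^i x_{y'})<\varepsilon$ for every $i$, contradicting the separation of $E$. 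Since $\phi$ restricted to $\widetilde E$ is injective, $|\widetilde E|=|E|$; and the intertwining also gives $S_n^f(\varphi_k\circ\phi)(x_y)=S_n^g\varphi_k(y)$, so the summand attached to $x_y$ coincides with that attached to $y$. Therefore
$$P_n^F(g,\Phi,\varepsilon)\;\le\;P_n^F(f,\Phi\circ\phi,\delta),$$
and passing to $\limsup_n$ followed by $\varepsilon\to 0$ (which forces $\delta\to 0$) yields $P^F(g,\Phi)\le P^F(f,\Phi\circ\phi)$.

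When $\phi$ is a homeomorphism, the cleanest route to equality is to apply the just-established inequality to the conjugate factor map $\phi^{-1}:Y\to X$, which is a continuous surjection satisfying $\phi^{-1}\circ g=f\circ\phi^{-1}$. For an arbitrary $\Psi\in C(X,\mathbb{R})^d$ this gives $P^F(f,\Psi)\le P^F(g,\Psi\circ\phi^{-1})$; specializing to $\Psi=\Phi\circ\phi$, so that $\Psi\circ\phi^{-1}=\Phi$, produces the reverse bound $P^F(f,\Phi\circ\phi)\le P^F(g,\Phi)$, and combining the two inequalities gives equality.

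The main technical nuisance is bookkeeping around strict versus non-strict inequalities when moving between $(n,\varepsilon)$-separated sets on $X$ and $Y$ or between $f$ and $f^{-1}$, especially reconciling the uniform-continuity modulus $\delta$ with the separation constant. This is easily handled by replacing $\delta$ by a slightly smaller positive number if necessary; because $P^F$ is defined by letting $\varepsilon\to 0$, no sharpness is lost. No deeper obstacle is expected, and in particular no appeal to the variational principle (Theorem \ref{high nonliner vp}) is needed.
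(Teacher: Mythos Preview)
Your proposal is correct. Part~(i) is identical to the paper's argument: both use the bijection $E\mapsto f^{n-1}E$ between $(n,\varepsilon)$-separated sets for $f$ and for $f^{-1}$, observing that the reindexing $j=n-1-i$ preserves both the Bowen metric and every summand. For part~(ii) there is a small but harmless difference: the paper pushes \emph{spanning} sets forward via $\phi$ (if $C$ is $(n,\delta)$-spanning for $X$ then $\phi(C)$ is $(n,\varepsilon)$-spanning for $Y$, giving $Q_n^F(f,\Phi\circ\phi,\delta)\ge Q_n^F(g,\Phi,\varepsilon)$), whereas you pull \emph{separated} sets back by choosing one preimage per point. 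These are dual versions of the same idea and yield the same inequality after passing to the limit; neither requires the variational principle. Your treatment of the homeomorphism case, swapping the roles of $(X,f)$ and $(Y,g)$ via $\phi^{-1}$, is exactly what the paper does. The strict/non-strict bookkeeping you flag is genuine but, as you note, disappears after shrinking $\delta$.
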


\begin{proof}
(i) Given \( \varepsilon > 0 \) and \( n \in \mathbb{N} \), it is easy to see that  a subset $E\subset X$ is  $(n, \varepsilon)$-separated with respect to $f$ if and only if
 $f^{n-1} E$ is  $(n, \varepsilon)$-separated with respect to $f^{-1}$. Furthermore, one has that
\[
\sum_{x \in E} \exp \left[n F \left( \frac{S_{n} \Phi(x)}{n} \right) \right] = \sum_{y\in f^{n-1} E} \exp \left[n F \left( \frac{ \sum_{i=0}^{n-1}\Phi(f^{-i}y) }{n} \right) \right],
\]
where $S_n\Phi=(S_n\varphi_1,S_n\varphi_2,\cdots, S_n\varphi_d)$.
 This yields that \(P^{F}(f, \Phi, \varepsilon) = P^{F}(f^{-1}, \Phi, \varepsilon) \) and \( P^{F}(f^{-1}, \Phi) = P^{F}(f, \Phi) \).

(ii)  Given $\varepsilon>0$, there exists $0<\delta<\varepsilon$ such that $d_{X}(x_1, x_2)<\delta$ implies that  $d_{Y}(g(x_1), g(x_2))< \varepsilon$, here  $d_{X}$ and $d_{Y}$  denote the metrics on $X$ and $Y$ respectively.

Given \( \varepsilon > 0 \) and \( n \in \mathbb{N} \), if $C$ is an $(n, \delta)$-spanning set for $X$ with respect to $f$, then $\phi(C)$ is an $(n, \varepsilon)$-spanning set for $Y$ with respect to $g$. Therefore, for every $\Phi \in C(Y, \mathbb{R})^d$, we have that
\begin{align*}
&\sum_{x \in C} \exp \left[n F \left( \frac{\Phi(\phi x) + \Phi\left(\phi f x\right) + \cdots + \Phi\left(\phi f^{n-1} x\right)}{n} \right) \right] \\
&= \sum_{y \in \phi(C)} \exp \left[n F \left( \frac{\Phi(y) + \Phi\left(g y\right) + \cdots +\Phi\left(g^{n-1} y\right)}{n} \right) \right] \\
&\geq Q_{n}^{F}\left(g, \Phi, \varepsilon\right).
\end{align*}
Therefore, \( Q^{F}\left(f, \Phi \circ \phi, \delta\right) \geq Q^{F}\left(g, \Phi, \varepsilon\right) \) and \( P^{F}\left(f, \Phi \circ \phi \right) \geq P^{F}\left(g, \Phi\right) \).

If \( \phi \) is a homeomorphism, considering $g,f,\phi^{-1}, \Phi\circ\phi $ instead of $f, g, \phi, \Phi$, one has that $P^{F}\left(g, \Phi\right) \ge P^{F}\left(f, \Phi \circ \phi \right)$. This completes the proof of the theorem.
\end{proof}

\subsection{Definition of nonlinear induced topological pressure}
Following the approaches as described in \cite{xing2015induced} and \cite{barreira2022higher}, the high dimensional nonlinear induced topological pressure is defined as follows.

Let \((X, f)\) be a TDS, \(\psi \in C(X, \mathbb{R})\) with \(\psi > 0\),  and let \(F: \mathbb{R}^{d} \rightarrow \mathbb{R}\) be continuous.
For \(T > 0\), let
\[
S_{T} = \left\{n \in \mathbb{N} :    S_{n} \psi(x) \leqslant T < S_{n+1} \psi(x) \text{ for some } x \in X\right\}.
\]
For \( n \in S_{T} \), let
\[
X_{n} = \left\{x \in X : S_{n} \psi(x) \leqslant T < S_{n+1} \psi(x)\right\}.
\]
Given $\Phi\in C\left(X, \mathbb{R}\right)^d $ and \( \varepsilon > 0 \), define
\[
\begin{aligned}
Q_{\psi, T}^{F}(f, \Phi, \varepsilon) =
\inf \Big \{ &\sum_{n \in S_{T}} \sum_{x \in F_{n}}
\exp \left( n F \left( \frac{S_{n} \Phi(x)}{n} \right) \right):\\
&  F_{n} \text{ is an } (n, \varepsilon) \text{-spanning set of } X_{n}, \ n \in S_{T} \Big\}.
\end{aligned}
\]
The nonlinear induced topological pressure
of $\Phi$ is then defined by
\begin{align}\label{5*}
P_{\psi}^{F}(f,\Phi) = \lim _{\varepsilon \rightarrow 0} Q_{\psi}^{F}(f, \Phi, \varepsilon),
\end{align}
where
$\displaystyle
Q_{\psi}^{F}(f, \Phi, \varepsilon) = \limsup_{T \rightarrow \infty} \frac{1}{T} \log Q_{\psi, T}^{F}(f, \Phi, \varepsilon)
$. If there is no confusion caused, we simply write $P_{\psi}^{F}(f,\Phi)$  as $P_{\psi}^{F}(\Phi)$.

\begin{remark}(i) Let \( m = \inf \psi \), \( M = \sup \psi \). Fix \( T>0 \),  we have that \( n \leqslant \frac{T}{m} \) for each \( n \in S_{T}\) , i.e., \( S_{T} \) is a finite set; (ii) If \( 0 < \varepsilon_1 < \varepsilon_2 \), then \( Q_{\psi, T}\left(f, \Phi, \varepsilon_1\right) \geqslant Q_{\psi, T}\left(f, \Phi, \varepsilon_2\right) \), which implies the existence of  the limit in \eqref{5*}  and \( P_{\psi}^F(\Phi) > -\infty \).
\end{remark}

The following theorem gives an equivalent definition of the nonlinear induced topological pressure  using separated sets.
\begin{theorem}\label{equiv-sep}
Let \((X, f)\) be a TDS, \(\psi \in C(X, \mathbb{R})\) with \(\psi > 0\),  and let \(F: \mathbb{R}^{d} \rightarrow \mathbb{R}\) be continuous. Given $\Phi\in C\left(X, \mathbb{R}\right)^d $,
\( T > 0 \) and \( \varepsilon > 0 \), let
\[
\begin{aligned}
P_{\psi, T}^F(f, \Phi, \varepsilon) = \sup \Big\{ &\sum_{n \in S_T} \sum_{x \in F_n}
\exp \left( n F \left( \frac{S_n \Phi(x)}{n} \right) \right):\\
 &F_n \text{ is an } (n, \varepsilon)\text{-separated set of } X_n, \ n \in S_T \Big\}.
\end{aligned}
\]
Then
\[
P_{\psi}^{F}(\Phi) = \lim _{\varepsilon \rightarrow 0} P_\psi^F(f, \Phi, \varepsilon),
\]
where $\displaystyle
P_{\psi}^F(f, \Phi, \varepsilon) = \limsup_{T \rightarrow \infty} \frac{1}{T} \log P_{\psi, T}^{F}(f, \Phi, \varepsilon)$.
\end{theorem}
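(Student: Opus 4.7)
The plan is to compare the spanning-set quantity $Q_{\psi,T}^{F}$ with the separated-set quantity $P_{\psi,T}^{F}$ by the classical two-sided argument used for the original topological pressure, adapted to the induced setting and carefully accounting for the nonlinearity $F$.

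First I would establish the easy inequality $Q_{\psi,T}^{F}(f,\Phi,\varepsilon)\le P_{\psi,T}^{F}(f,\Phi,\varepsilon)$. Fix $n\in S_{T}$ and choose an $(n,\varepsilon)$-separated subset $E_{n}\subset X_{n}$ of maximal cardinality. By maximality, for every $y\in X_{n}$ there is some $x\in E_{n}$ with $d_{n}(x,y)\le\varepsilon$, so $E_{n}$ is simultaneously an $(n,\varepsilon)$-spanning set of $X_{n}$. Using $E_{n}$ on the right of the definition of $Q_{\psi,T}^{F}$ yields the inequality termwise after summing over $n\in S_{T}$, hence $\limsup_{T\to\infty}\frac{1}{T}\log Q_{\psi,T}^{F}\le\limsup_{T\to\infty}\frac{1}{T}\log P_{\psi,T}^{F}$ for each $\varepsilon$, and letting $\varepsilon\to 0$ gives $P_{\psi}^{F}(\Phi)\le\lim_{\varepsilon\to 0}P_{\psi}^{F}(f,\Phi,\varepsilon)$.

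For the reverse direction I would compare at scales $2\varepsilon$ and $\varepsilon$. Let $E_{n}$ be an arbitrary $(n,2\varepsilon)$-separated set of $X_{n}$ and $F_{n}$ an arbitrary $(n,\varepsilon)$-spanning set of $X_{n}$. For each $x\in E_{n}$ pick $\phi(x)\in F_{n}$ with $d_{n}(x,\phi(x))\le\varepsilon$; by the triangle inequality the separation condition forces $\phi$ to be injective. The key analytic step is to control the weight. Fix $\tau>0$. Since each $\varphi_{i}$ is uniformly continuous on the compact space $X$, there exists $\delta>0$ with $d(u,v)<\delta\Rightarrow|\varphi_{i}(u)-\varphi_{i}(v)|<\tau$ for all $i$; then $d_{n}(x,y)<\delta$ gives $|S_{n}\varphi_{i}(x)/n-S_{n}\varphi_{i}(y)/n|<\tau$. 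Because $F$ is continuous and the Birkhoff averages lie in the compact convex hull of $\Phi(X)$, $F$ is uniformly continuous there, so by shrinking $\varepsilon$ one may arrange
\[
\left|F\!\left(\tfrac{S_{n}\Phi(x)}{n}\right)-F\!\left(\tfrac{S_{n}\Phi(\phi(x))}{n}\right)\right|\le\tau.
\]
Hence $\exp\bigl[nF(S_{n}\Phi(x)/n)\bigr]\le e^{n\tau}\exp\bigl[nF(S_{n}\Phi(\phi(x))/n)\bigr]$, and summing along the injection $\phi$ yields
\[
\sum_{x\in E_{n}}\!\exp\!\bigl[nF(S_{n}\Phi(x)/n)\bigr]\le e^{n\tau}\!\sum_{y\in F_{n}}\!\exp\!\bigl[nF(S_{n}\Phi(y)/n)\bigr].
\]
Using $n\le T/m$ for every $n\in S_{T}$ (where $m=\inf\psi>0$), the factor $e^{n\tau}$ is absorbed into a uniform $e^{T\tau/m}$, so after summing on $S_{T}$, taking the infimum over $F_{n}$ and the supremum over $E_{n}$,
\[
P_{\psi,T}^{F}(f,\Phi,2\varepsilon)\le e^{T\tau/m}\,Q_{\psi,T}^{F}(f,\Phi,\varepsilon).
\]

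Taking logarithms, dividing by $T$, and passing to $\limsup_{T\to\infty}$ gives $\limsup_{T\to\infty}\frac{1}{T}\log P_{\psi,T}^{F}(f,\Phi,2\varepsilon)\le\tau/m+\limsup_{T\to\infty}\frac{1}{T}\log Q_{\psi,T}^{F}(f,\Phi,\varepsilon)$. Letting $\varepsilon\to 0$ (which forces $\tau\to 0$ by the uniform continuity chosen above) and combining with the easy direction yields the desired equality. The main obstacle in this plan is not conceptual but the bookkeeping of uniformities: one must coordinate the uniform continuity of the $\varphi_{i}$ on $X$ with the uniform continuity of $F$ on the compact convex hull of $\Phi(X)$, while simultaneously exploiting the bound $n\le T/m$ to convert the additive $n\tau$ error into a multiplicative $e^{T\tau/m}$ factor that becomes negligible after dividing by $T$ and letting $\tau\to 0$.
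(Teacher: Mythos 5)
Your proposal is correct and follows essentially the same route as the paper: the easy inequality via maximal separated sets being spanning, and the reverse via an injection from an $(n,2\varepsilon)$-separated set into an $(n,\varepsilon)$-spanning set (the paper uses radii $\delta$ and $\delta/2$), with the error in $F$ controlled through the chained uniform continuity of the $\varphi_{i}$ and of $F$, the additive $n\tau$ error converted to $e^{T\tau/m}$ via $n\le T/m$. One small point in your favor: you correctly note that $F$ should be taken uniformly continuous on the compact convex hull of $\Phi(X)$ rather than merely on $\Phi(X)$, since the Birkhoff averages $S_{n}\Phi(x)/n$ lie in the convex hull; the paper's phrasing ``$\forall\alpha,\beta\in\Phi(X)$'' glosses over this.
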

\begin{proof}
Given \( T > 0 \) and  \( n \in S_{T} \), let \( E_{n} \) be a maximal \((n, \varepsilon)\)-separated set of \( X_{n} \), then \( E_{n} \) is an \((n, \varepsilon)\)-spanning set of \( X_{n} \). This yields that
$
Q_{\psi, T}^{F}(f, \Phi, \varepsilon) \leqslant P_{\psi, T}^{F}(f, \Phi, \varepsilon).
$
Hence, we have
\[
P_{\psi}^{F}(\Phi) \leqslant \lim _{\varepsilon \rightarrow 0} \limsup_{T \rightarrow \infty} \frac{1}{T} \log P_{\psi, T}^{F}(f, \Phi, \varepsilon).
\]

  Next, we will show that
\[
P_{\psi}^{F}(\Phi) \geqslant \lim _{\varepsilon \rightarrow 0} \limsup_{T \rightarrow \infty} \frac{1}{T} \log P_{\psi, T}^{F}(f, \Phi, \varepsilon).
\]
Since \(\Phi\)  and  \(F\)  are continuous,  for every \(\varepsilon > 0\), there exists \(\bar{\delta},\delta  > 0\) such that
\[
 \varrho(\alpha, \beta) < \bar{\delta} \implies |F(\alpha) - F(\beta)| < \varepsilon, \quad \forall \alpha, \beta \in \Phi(X)
\]
and
\[
 d(x, y) \leqslant \frac{\delta}{2} \implies |\varphi_{i}(x) - \varphi_{i}(y)| < \bar{\delta}\,\, (i=1,2, \cdots, d),\quad\forall x, y\in X
\]
where $\varrho$ is the maximum norm on $\mathbb{R}^d$.

Given \( T > 0 \) and \(n \in S_T\), let \( E_n \) be an \((n, \delta)\)-separated set of \( X_n \), and \( F_n \) be an \((n, \frac{\delta}{2})\)-spanning set of \( X_n \). Define $\phi: E_{n} \rightarrow F_{n}$ by choosing,  for each $x \in E_{n}$, some point $\phi(x)\in F_{n}$ with $d_n(x, \phi(x)) \leqslant \frac{\delta}{2}$. It is easy to see that $\phi$ is injective.
Thus, we have that
$$
\begin{aligned}
&\sum_{y \in F_n} \exp \left( nF \left( \frac{S_n \Phi(y)}{n} \right) \right) \\
&\geqslant \sum_{x \in E_n} \exp \left( nF \left( \frac{S_n \Phi(\phi(x))}{n} \right) \right) \\
&= \sum_{x \in E_n} \exp \left( nF \left( \frac{S_n \Phi(x)}{n} \right) \right)
\exp \left( nF \left( \frac{S_n \Phi(\phi(x))}{n} \right) - nF \left( \frac{S_n \Phi(x)}{n} \right) \right) \\
&\geqslant \min_{x \in E_n} \exp \left( nF \left( \frac{S_n \Phi(\phi(x))}{n} \right)
- nF \left( \frac{S_n \Phi(x)}{n} \right) \right)
\sum_{x \in E_n} \exp \left( nF \left( \frac{S_n \Phi(x)}{n} \right) \right).
\end{aligned}
$$
Since $d_n(x, \phi(x)) \leqslant \frac{\delta}{2}$ for every \( x \in E_n \),
for each $i = 1, 2,\cdots,d$,  we have that
\[
\Big|\varphi_i(f^jx)-\varphi_i(f^j\phi(x))\Big|< \bar{\delta}, \quad \forall j= 1, 2,\cdots, n-1,
\]
which implies that
\[
\Big|\frac{S_n \varphi_i(\phi(x))}{n}-\frac{S_n \varphi_i(x)}{n}\Big| < \bar{\delta}, \quad \forall i = 1, \cdots, d.
\]
Hence, we have that
\[
\varrho\Big( \frac{S_n \Phi(\phi(x))}{n}, \frac{S_n \Phi(x)}{n}\Big) < \bar{\delta}.
\]
This yields that
\[
\Big| nF\Big(\frac{S_n \Phi(x)}{n}\Big)- nF\Big(\frac{S_n \Phi(\phi(x))}{n}\Big)\Big| < n\varepsilon< \frac{T\varepsilon}{m},\quad \forall x \in E_n
\]
since \( n \leqslant \frac{T}{m} \).
Hence, we have that
\begin{align*}
\sum_{y \in F_n} \exp\Big(nF\Big(\frac{S_n \Phi(y)}{n}\Big)\Big)
\geqslant e^{-\frac{T}{m}\varepsilon} \sum_{x \in E_n} \exp\Big(nF\Big(\frac{S_n \Phi(x)}{n}\Big)\Big).
\end{align*}
Consequently, one has that
\[
\begin{aligned}
P_F^{\psi}(\Phi) &= \lim_{\delta \to 0} \limsup_{T \to \infty} \frac{1}{T} \log Q_{\psi, T}^F(f,\Phi, \frac{\delta}{2}) \\
&\geqslant \lim_{\delta \to 0} \limsup_{T \to \infty} \frac{1}{T} \log P_{\psi, T}^F(f,\Phi, \delta) - \frac{\varepsilon}{m}.
\end{aligned}
\]
Since  \(\varepsilon \) is chosen arbitrarily, the desired result follows immediately.
\end{proof}

\begin{remark} As in Section \ref{equivalent Top-Pressure}, one can define the nonlinear induced topological pressure via open covers. Moreover, it can be shown in a similar fashion that Theorem \ref{open cover et}  is equivalent to the one defined by spanning sets.
\end{remark}

The following theorem shows that the nonlinear induced topological pressure is a topologically conjugate invariant.

\begin{theorem}
Let \( f: X \rightarrow X  \) be a continuous map of compact metric space \( \left(X, d_{1}\right) \), and let \( h: Y \rightarrow Y  \) be a continuous map of compact metric space \( \left(Y, d_{2}\right) \). Assume that \( F: \mathbb{R}^{d} \rightarrow \mathbb{R} \) is continuous, and  \( g: X \rightarrow Y \) is a  homeomorphism  such that \( g \circ f = h \circ g \). Let \(\psi \in C(Y, \mathbb{R})\) with \(\psi > 0\) and $
\Phi\in C\left(Y, \mathbb{R}\right)^d$,
 we have that
\[
P_{\psi}^{F}\left(h, \Phi\right) = P_{\psi \circ g}^{F}\left(f, \Phi \circ g\right),\]
  where  $\Phi \circ g = (\varphi_{1} \circ g, \cdots, \varphi_{d} \circ g)$.

\end{theorem}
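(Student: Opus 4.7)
The proof will exploit the facts that a homeomorphism between compact metric spaces is uniformly continuous in both directions, and that the conjugacy $g \circ f = h \circ g$ iterates to $g \circ f^i = h^i \circ g$ for every $i \ge 0$. From this iteration it follows immediately that
\[
S_n(\psi \circ g)(x) = S_n\psi(g(x)) \quad \text{and} \quad S_n(\Phi \circ g)(x) = S_n\Phi(g(x))
\]
for every $x \in X$ and $n \in \mathbb{N}$. Consequently, the sets $S_T$ and $X_n$ associated with $(f, \psi \circ g)$ on $X$ are precisely the $g$-preimages of the corresponding sets associated with $(h, \psi)$ on $Y$; in particular $S_T$ is the same finite subset of $\mathbb{N}$ in both settings, and $g(X_n^X) = X_n^Y$ for each $n \in S_T$.

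Next, the plan is to translate $(n,\varepsilon)$-spanning sets through $g$. Given $\varepsilon > 0$, uniform continuity of $g^{-1}$ provides $\delta = \delta(\varepsilon) > 0$, with $\delta \to 0$ as $\varepsilon \to 0$, such that $d_2(y_1, y_2) \le \varepsilon$ implies $d_1(g^{-1}y_1, g^{-1}y_2) \le \varepsilon'$ where $\varepsilon'$ is any prescribed tolerance; more to the point, using uniform continuity of $g$ we pick $\delta$ so that $d_1(x_1, x_2) \le \delta$ implies $d_2(g x_1, g x_2) \le \varepsilon$. Combined with $g \circ f^i = h^i \circ g$, this yields: if $F_n \subset X$ is an $(n, \delta)$-spanning set for $X_n^X$ with respect to $f$, then $g(F_n) \subset Y$ is an $(n, \varepsilon)$-spanning set for $X_n^Y$ with respect to $h$. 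Since $g$ is a bijection and the exponentials agree termwise because $S_n(\Phi \circ g)(x) = S_n\Phi(g(x))$, the sums
\[
\sum_{x \in F_n} \exp\!\Big(nF\bigl(S_n(\Phi \circ g)(x)/n\bigr)\Big) = \sum_{y \in g(F_n)} \exp\!\Big(nF\bigl(S_n\Phi(y)/n\bigr)\Big)
\]
are identical, which gives $Q^F_{\psi,T}(h, \Phi, \varepsilon) \le Q^F_{\psi \circ g, T}(f, \Phi \circ g, \delta)$.

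The reverse direction runs symmetrically using uniform continuity of $g^{-1}$: pick $\delta' = \delta'(\varepsilon)$ with $\delta' \to 0$ as $\varepsilon \to 0$, such that every $(n, \delta')$-spanning set $G_n \subset Y$ for $X_n^Y$ pulls back to an $(n, \varepsilon)$-spanning set $g^{-1}(G_n) \subset X$ for $X_n^X$, yielding $Q^F_{\psi \circ g, T}(f, \Phi \circ g, \varepsilon) \le Q^F_{\psi, T}(h, \Phi, \delta')$. Taking $\limsup_{T \to \infty} \frac{1}{T} \log(\cdot)$ of both chains of inequalities and then letting $\varepsilon \to 0$ (which forces $\delta, \delta' \to 0$), both $Q^F_\psi(h, \Phi, \varepsilon)$ and $Q^F_{\psi \circ g}(f, \Phi \circ g, \varepsilon)$ converge to the same limit, establishing $P_\psi^F(h, \Phi) = P_{\psi \circ g}^F(f, \Phi \circ g)$.

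The only delicate point, though routine, is keeping the bookkeeping correct: one must check that the $(n, \varepsilon)$-spanning condition for $X_n$ (not for the whole space) is preserved under $g$, which it is because $g(X_n^X) = X_n^Y$ by the first paragraph. There is no genuine obstacle here, and no quantitative estimate on $F$ or $\psi$ is needed because the exponential sums match exactly under the conjugacy.
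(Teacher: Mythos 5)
Your proof is correct and follows essentially the same route as the paper: both establish that the conjugacy makes the index sets $S_T$ coincide and that $g$ carries $X_n^X$ onto $X_n^Y$, then transfer $(n,\delta)$-spanning sets to $(n,\varepsilon)$-spanning sets via uniform continuity of $g$, observe the exponential sums are termwise identical since $S_n(\Phi\circ g)(x)=S_n\Phi(g(x))$, and invoke the homeomorphism for the reverse inequality. One small infelicity: you open the second paragraph invoking uniform continuity of $g^{-1}$, then pivot to uniform continuity of $g$, which is the one actually needed for that direction; the paper states directly that one chooses $\delta\in(0,\varepsilon)$ using uniform continuity of $g$, which is cleaner. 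Another minor slip is the phrase ``the sets $S_T$ and $X_n$ ... are precisely the $g$-preimages,'' which cannot literally apply to $S_T\subset\mathbb{N}$, though you correct this in the same sentence. Neither affects the validity of the argument.
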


\begin{proof}
For every $T>0$, let
$$
S^{X}_{T} := \left\{n \in \mathbb{N} : \exists x \in X \text{ such that } S_{n} \psi \circ g(x) \leqslant T < S_{n+1} \psi \circ g(x)\right\},
$$
and
$$
S^{Y}_{T} := \left\{n \in \mathbb{N} : \exists y \in X \text{ such that } S_{n} \psi(y) \leqslant T < S_{n+1} \psi(y)\right\}.
$$
If  \( \psi(y)+\psi(h(y))+\cdots+\psi(h^{n-1}(y)) \leqslant T < \psi(y)+\psi(h(y))+\cdots+\psi(h^{n}(y)) \) for some \( y \in Y \),  then  \( \psi(g(x))+\psi(g(f(x)))+...+\psi(g(f^{n-1}(x))) \leqslant T < \psi(g(x))+\psi(g(f(x)))+...+\psi(g(f^{n}(x))) \) by letting  \( x=g^{-1}(y) \in X \).  This implies that \( S^{Y}_{T} = S^{X}_{T} \). Let \( S_{T} \) denote the common set.

 Given $\varepsilon>0$, there exists $\delta\in (0,\varepsilon)$ such that  $d_{2}(g(x_1), g(x_2))< \varepsilon$  whenever $d_{1}(x_1, x_2)<\delta$. For every \( \varepsilon > 0 \), \( T >0 \) and \( n \in S_{T} \), if $F_n$ is an $(n, \delta)$-spanning set for $X_n$ with respect to $f$, then $g F_n$ is an $(n, \varepsilon)$-spanning set for $Y_n$ with respect to $h$. Therefore,  we have that
\begin{align*}
&\sum_{n \in S_{T}}\sum_{x \in F_n} \exp \Big[n F \Big( \frac{\Phi(g x) + \Phi(g f x) + \cdots + \Phi(g f^{n-1} x)}{n} \Big) \Big] \\
&= \sum_{n \in S_{T}}\sum_{y \in g F_n} \exp \Big[n F \Big( \frac{\Phi(y) + \Phi(h y) + \cdots +\Phi(h^{n-1} y)}{n} \Big) \Big] \\
&\geqslant Q_{\psi,T}^{F}(h, \Phi, \varepsilon).
\end{align*}
Hence, \(  Q_{\psi \circ g}^{F}(f, \Phi \circ g, \delta) \geqslant Q_{\psi}^{F}(h, \Phi, \varepsilon) \) and \( P_{\psi \circ g}^{F}(f, \Phi \circ g) \geqslant P_{\psi}^{F}(h, \Phi) \).

Since \( g \) is a homeomorphism, the reverse inequality \( P_{\psi \circ g }^{F}(f, \Phi \circ g) \leqslant P_{\psi}^{F}(h, \Phi) \) can be proven in a similar fashion.
\end{proof}

\subsection{Variational principle for nonlinear induced topological pressure}
This section gives the variational principle for high dimensional nonlinear induced topological pressure. The key ingredient of the proof is the study of the relation between the induced topological pressure and the nonlinear induced pressure.

Let $(X,f)$, $\psi$ be as before, for \( T > 0 \), let
$
G_T = \{n \in \mathbb{N} : \exists x \in X \text{ such that } S_n \psi(x) > T \},
$
and let
$
Y_n = \{x \in X : S_n \psi(x) > T\}
$
for \( n \in G_T \).
\begin{theorem} \label{induce nonliner p thm}
Let \((X, f)\) be a TDS, \(\psi \in C(X, \mathbb{R})\) with \(\psi > 0\),  and let \(F: \mathbb{R}^{d} \rightarrow \mathbb{R}\) be continuous.
Given $\Phi\in C(X,\mathbb{R})^d$, $\phi\in C(X,\mathbb{R})$ and \( \varepsilon > 0 \), let
\begin{align*}
R_{\psi, T}^F( \Phi, \phi, \varepsilon)
= \sup \Big\{&\sum_{n \in G_{T}} \sum_{x \in E_{n}^{\prime}}
\exp \Big[ n F\Big(\frac{S_n \Phi(x)}{n}\Big)
+ S_{n} \phi(x)\Big]:  \\
 & E_{n}^{\prime} \text{ is an } (n, \varepsilon)\text{-separated set of } Y_{n}, \ n \in G_{T} \Big\}.
\end{align*}
Then, we have that
\[
P_\psi^F(\Phi) = \inf\Big\{\beta \in \mathbb{R} : \lim_{\varepsilon \to 0} \limsup_{T \to \infty} R_{\psi, T}^F(  \Phi, -\beta \psi, \varepsilon) < \infty \Big\}.
\]
Here we take the convention that \(\inf \emptyset = \infty\).
\end{theorem}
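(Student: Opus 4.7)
The plan is to prove the two inequalities $\inf\{\beta:\lim_{\varepsilon\to 0}\limsup_{T\to\infty}R_{\psi,T}^F(\Phi,-\beta\psi,\varepsilon)<\infty\}\ge P_\psi^F(\Phi)$ and $\inf\{\cdots\}\le P_\psi^F(\Phi)$ separately. Let $m=\inf\psi$, $M=\sup\psi$, and $C_F=\sup\{|F(\alpha)|:\alpha\in\overline{\mathrm{conv}}\,\Phi(X)\}$; observe that $F$ is uniformly continuous on this compact set.

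For the inequality $\inf\ge P_\psi^F(\Phi)$, I would assume $\limsup_{T\to\infty}R_{\psi,T}^F(\Phi,-\beta\psi,\varepsilon)<\infty$ for every small $\varepsilon$ and deduce $P_\psi^F(\Phi)\le\beta$. The starting observation is that $X_n\subset Y_{n+1}$ (since $x\in X_n$ forces $S_{n+1}\psi(x)>T$) and $d_{n+1}\ge d_n$, so every $(n,\varepsilon)$-separated subset $E_n\subset X_n$ is automatically an $(n+1,\varepsilon)$-separated subset of $Y_{n+1}$, while the index shift $n\mapsto n+1$ injects $S_T$ into $G_T$; thus $\{E_{n+1}':=E_n\}$ is a valid competitor in the definition of $R$. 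Using $S_{n+1}\psi(x)\in(T,T+M]$ on $X_n$ to lower-bound $e^{-\beta S_{n+1}\psi(x)}$ (case-splitting on the sign of $\beta$), and exploiting uniform continuity of $F$ together with $|S_{n+1}\Phi(x)/(n+1)-S_n\Phi(x)/n|=O(1/n)$ to obtain $(n+1)F(S_{n+1}\Phi(x)/(n+1))\ge nF(S_n\Phi(x)/n)-n\gamma-C_F$ for any $\gamma>0$ and all sufficiently large $n$ (absorbing $n\gamma$ into $T\gamma/m$ via $n\le T/m$ for $n\in S_T$), one arrives at a bound of the form $P_{\psi,T}^F(f,\Phi,\varepsilon)\le e^{O(1)+T(\beta+\gamma/m)}R_{\psi,T}^F(\Phi,-\beta\psi,\varepsilon)$. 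Applying Theorem~\ref{equiv-sep}, taking $\limsup_T(1/T)\log(\cdot)$, then $\varepsilon\to 0$, and finally $\gamma\to 0$ yields $P_\psi^F(\Phi)\le\beta$.

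For the reverse inequality, I would fix $\beta>P_\psi^F(\Phi)$ and $\gamma>0$ small with $\beta>P_\psi^F(\Phi)+\gamma$, and show $\limsup_T R_{\psi,T}^F(\Phi,-\beta\psi,\varepsilon)\to 0$ for small $\varepsilon$. By definition, $P_{\psi,T'}^F(f,\Phi,\varepsilon)\le e^{T'(P_\psi^F(\Phi)+\gamma)}$ for $\varepsilon$ small and $T'$ large. I then stratify $Y_n$ by the value of $S_n\psi$: set $T_j=T+(j+1)m$ and $Y_n^{(j)}=\{x\in Y_n:S_n\psi(x)\in(T+jm,T_j]\}$. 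The stratification width $m$ is chosen precisely so that $S_{n+1}\psi(x)=S_n\psi(x)+\psi(f^nx)>(T+jm)+m=T_j$, yielding the crucial inclusion $Y_n^{(j)}\subset X_n(T_j)$, where $X_n(T'):=\{x:S_n\psi(x)\le T'<S_{n+1}\psi(x)\}$. Since the supremum defining $P_{\psi,T_j}^F$ is separable in $n$,
\[
\sum_{n}\sup_{E\subset X_n(T_j)}\sum_{x\in E}e^{nF(S_n\Phi(x)/n)}\;\le\;P_{\psi,T_j}^F(f,\Phi,\varepsilon)\;\le\;e^{T_j(P_\psi^F(\Phi)+\gamma)}.
\]
Bounding $e^{-\beta S_n\psi(x)}$ uniformly on $Y_n^{(j)}$ (by $e^{-\beta(T+jm)}$ when $\beta\ge 0$, by $e^{-\beta T_j}$ when $\beta<0$) and summing the resulting geometric series over $j\ge 0$ (with ratio $e^{m(P_\psi^F(\Phi)+\gamma-\beta)}<1$) delivers $R_{\psi,T}^F(\Phi,-\beta\psi,\varepsilon)\le C\,e^{T(P_\psi^F(\Phi)+\gamma-\beta)}\to 0$ as $T\to\infty$.

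The main obstacle is the nonlinearity of $F$ in the first inequality: unlike in the linear case, $nF(S_n\Phi/n)$ and $(n+1)F(S_{n+1}\Phi/(n+1))$ cannot be related by a bounded additive error, only by an $n\,\omega_F(O(1/n))$-error arising from uniform continuity of $F$. The only available remedy is to exploit the built-in bound $n\le T/m$ on $S_T$ together with the strict order of limits ($T\to\infty$, then $\varepsilon\to 0$, then $\gamma\to 0$). A more subtle technical point in the second inequality is that the stratification width must equal $m=\inf\psi$ exactly: this specific choice both ensures the inclusion $Y_n^{(j)}\subset X_n(T_j)$ for every pair $(n,j)$ and produces the decisive geometric ratio that makes the series over $j$ convergent.
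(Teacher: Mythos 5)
Your proof is correct, and the top-level architecture (two one-sided inequalities, each relying on the separated-set characterization of Theorem~\ref{equiv-sep}) matches the paper, but the implementation of the first inequality is genuinely different from the paper's.

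For the direction $\inf \leqslant P_\psi^F(\Phi)$ your argument is essentially the paper's: stratify a separated set $E_n' \subset Y_n$ by the value of $S_n\psi$ into slabs of width exactly $m=\inf\psi$, observe that this width guarantees each slab sits inside $X_n(T')$ for a single level $T'$ (because $\psi(f^n x)\geqslant m$ pushes $S_{n+1}\psi$ past $T'$), and sum the resulting geometric series over slabs using $P^F_{\psi,T'}(f,\Phi,\varepsilon)\leqslant e^{T'(P^F_\psi(\Phi)+\gamma)}$. The paper indexes its slabs as $((\ell-1)m,\ell m]$ from the origin, you anchor them at $T$; this is cosmetic.

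For the direction $P_\psi^F(\Phi) \leqslant \inf$ the paper proceeds by contraposition with a multi-scale construction: it first discretizes by replacing $S_n\psi(x)$ with $m_n(x)\|\psi\|$ where $(m_n(x)-1)\|\psi\| < S_n\psi(x) \leqslant m_n(x)\|\psi\|$, then, given $\beta < P_\psi^F(\Phi)-\delta$, selects a sparse sequence of levels $T_j$ with $T_{j+1}-T_j>2\|\psi\|$ at which the separated-set pressures $P^F_{\psi,T_j}$ are large, moves the corresponding separated sets from the sets $X_n$ into $Y_n$, and sums over all $j$ to force $R_{\psi,T}^F=\infty$. Your argument avoids both the $m_n(x)$ discretization and the multi-scale assembly: it gives the direct single-scale inequality
\[
P_{\psi,T}^F(f,\Phi,\varepsilon) \leqslant e^{O(1)+T(\beta+\gamma/m)}\,R_{\psi,T}^F(\Phi,-\beta\psi,\varepsilon)
\]
obtained from the index shift $n\mapsto n+1$, the inclusion $X_n\subset Y_{n+1}$, and the observation that an $(n,\varepsilon)$-separated set is automatically $(n+1,\varepsilon)$-separated because $d_{n+1}\geqslant d_n$. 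The cost is the nonuniform error $(n+1)\gamma + C_F$ coming from comparing $(n+1)F\bigl(S_{n+1}\Phi/(n+1)\bigr)$ with $nF\bigl(S_n\Phi/n\bigr)$, which you correctly absorb via $n\leqslant T/m$ together with the order of limits $T\to\infty$, then $\varepsilon\to 0$, then $\gamma\to 0$. This direct comparison between $P^F_{\psi,T}$ and $R^F_{\psi,T}$ is an explicit quantitative relation that the paper's argument does not produce, and the logic is arguably cleaner; the only minor imprecision in your sketch is writing $n\gamma$ where $(n+1)\gamma$ is the exact loss, which is immaterial.
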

\begin{proof}
For \( n \in \mathbb{N}, x \in X \), let \( m_n(x) \) denote the unique positive integer such that
\begin{equation}\label{***}
(m_n(x) - 1) \| \psi \|< S_n \psi(x) \leqslant m_n(x) \| \psi \|.
\end{equation}
Hence, one has that
\[
\begin{aligned}
\exp(-\beta m_n(x) \| \psi \|) \exp(-|\beta| \| \psi \|)
&\leqslant \exp(-\beta S_n \psi(x)) \\
&\leqslant \exp(-\beta m_n(x) \| \psi \|) \exp(|\beta| \| \psi \|)
\end{aligned}
\]
for every $x\in X$.
Given a family of functions \(\xi_T = \{\xi_n : X \to \mathbb{R} \mid n \in G_T\}\), for every \( \varepsilon > 0 \), define
\[
\begin{aligned}
R_{\psi, T}^F( \Phi, \xi_T, \varepsilon)
= \sup \Big\{ & \sum_{n \in G_T} \sum_{x \in E_n^{\prime}}
\exp\Big[ n F\Big(\frac{S_n \Phi(x)}{n}\Big) + \xi_n(x) \Big]:
 \\
& E_n^{\prime} \text{ is an  } (n, \varepsilon)\text{-separated set of } Y_n, \ n \in G_T \Big\}.
\end{aligned}
\]
This yields that
\begin{align*}
&\exp(-|\beta| \|\psi\|) R^{F}_{\psi, T}( \Phi, \left\{-\beta m_{n}\|\psi\|\right\}_{n \in G_{T}}, \varepsilon )\\
&\leqslant R^{F}_{\psi, T}( \Phi, -\beta \psi, \varepsilon) \\
&\leqslant \exp(|\beta| \|\psi\|) R^{F}_{\psi, T}( \Phi, \left\{-\beta m_{n}\|\psi\|\right\}_{n \in G_{T}}, \varepsilon).
\end{align*}
Consequently, one can conclude that
\[
\lim_{\varepsilon \to 0} \limsup_{T \to \infty} R^{F}_{\psi, T}( \Phi, -\beta \psi, \varepsilon) < \infty
\]
if and only if
 \[\lim_{\varepsilon \to 0} \limsup_{T \to \infty} R^{F}_{\psi, T}( \Phi, \left\{-\beta m_{n}\|\psi\|\right\}_{n \in G_{T}}, \varepsilon) < \infty.
\]
Thus, it suffices to show that
\[
P^{F}_{\psi}(\Phi) = \inf\Big\{\beta \in \mathbb{R} : \lim_{\varepsilon \to 0} \limsup_{T \to \infty} R^{F}_{\psi, T}(\Phi, \{-\beta m_{n}\|\psi\|\}_{n \in G_{T}}, \varepsilon) < \infty \Big\}.
\]

By Theorem \ref{equiv-sep}, for every \( \delta > 0 \),  \( \beta \in \mathbb{R} \) with \( \beta < P_{\psi}^{F}(\Phi) - \delta \), there exists \( \varepsilon_0 > 0 \), such that for every \( \varepsilon \in (0, \varepsilon_0) \), we have

\[
\beta + \delta < \limsup_{T \to \infty} \frac{1}{T} \log P^{F}_{\psi, T}(f, \Phi, \varepsilon) \leqslant P^{F}_{\psi}(\Phi).
\]
Therefore, for every \( T > 0 \) , there exists a sequence of numbers \( \{T_{j}\}_{j \in \mathbb{N}} \) such that for each $j \in \mathbb{N}$, the following properties hold:
\begin{enumerate}
\item[(i)]$T_{j+1} - T_j > 2\|\psi\|$ and $T_j - \|\psi\| > T$;
\item[(ii)]$\frac{1}{T_j} \log P^{F}_{\psi, T_j}(f,\Phi, \varepsilon) > \beta + \frac{\delta}{2}$.
\end{enumerate}
Fix $j \in \mathbb{N}$, for all $n \in S_{T_j}$, there exists an $(n, \varepsilon)$-separated set $E_n $ of $X_n$ such that
\begin{equation}\label{**}
\sum_{n \in S_{T_j}} \sum_{x \in E_n}
\exp\Big(nF\Big(\frac{S_n\Phi(x)}{n}\Big)\Big) \geqslant
\exp\Big(T_j(\beta + \frac{\delta}{2})\Big).
\end{equation}
Note that for every \( j \in \mathbb{N} \),  \( n \in S_{T_j} \) and  \( x \in E_n \),
$
T_j - \|\psi\| \leqslant S_n \psi(x) \leqslant T_j,
$
this implies that  \( S_{T_j} \cap S_{T_i} = \emptyset \)  for every \( i \neq j \). This together with \eqref{***} yield that
\[
-2\|\psi\| + T_j \leqslant S_n \psi(x) \leqslant  \|\psi\| m_n(x) \leqslant S_n \psi(x)+\|\psi\| \leqslant 2\|\psi\| + T_j.
\]
Hence,  we have that
\begin{equation}\label{*}
-\beta\|\psi\| m_n(x) \geqslant -2|\beta|\|\psi\| - \beta T_j, \quad \forall \beta \in \mathbb{R}.
\end{equation}
 By \eqref{**} and  \eqref{*}, we have that
\[
\begin{aligned}
R^{F}_{\psi, T}&\left(\Phi, \left\{-\beta m_n\|\psi\|\right\}_{n \in G_T}, \varepsilon\right) \\
&\geq \sum_{j \in \mathbb{N}, T_j - \|\psi\| > T} \sum_{n \in S_{T_j}} \sum_{x \in E_n}
\exp\Big(n F\Big(\frac{S_n \Phi(x)}{n}\Big) - \beta m_n(x)\|\psi\|\Big) \\
&\geqslant \exp\left(-2 |\beta| \|\psi\|\right)
\sum_{j \in \mathbb{N}, T_j - \|\psi\| > T}
\exp\Big(T_j (\beta + \frac{\delta}{2}) - \beta T_j\Big) \\
&= \infty.
\end{aligned}
\]
The above argument is not only valid for $P^{F}_{\psi}(\Phi)\in \mathbb{R}$, but also for $P^{F}_{\psi}(\Phi)=\infty$, in which case the previous inequality holds for
every $\beta\in\mathbb{R}$.
The arbitrariness of $\beta, \delta$ implies that
\[
P^{F}_{\psi}(\Phi) \leq \inf\Big\{\beta \in \mathbb{R} : \lim_{\varepsilon \to 0} \limsup_{T \to \infty} R^{F}_{\psi, T}(\Phi, \{-\beta m_{n}\|\psi\|\}_{n \in G_{T}}, \varepsilon) < \infty \Big\}.
\]

To prove the reverse inequality, given \( \delta > 0 \), we consider the case $P^{F}_{\psi}(\Phi)\in \mathbb{R}$ and show that
\[
\lim_{\varepsilon \to 0} \limsup_{T \to \infty} R_{\psi, T}^{F}( \Phi, \{-(P_{\psi}^{F}(\Phi)+\delta) m_{n}\|\psi\|\}_{n \in G_{T}} , \varepsilon ) < \infty.
\]
 By Theorem \ref{equiv-sep},  there exists \( \varepsilon_0 > 0 \) such that for every \( \varepsilon \in (0, \varepsilon_0) \), we have that
\[
\limsup_{T \to \infty} \frac{1}{T} \log P_{\psi, T}^F(f, \Phi, \varepsilon) < P_\psi^F(\Phi) + \frac{\delta}{2}.
\]
Fix such an $\varepsilon>0$, there exists \( \ell_0 \in \mathbb{N} \) such that for every positive
integer \( \ell > \ell_0 \), we have that
\begin{equation}\label{xx}
P_{\psi, \ell m}^{F}(f, \Phi, \varepsilon) < \exp\Big(\ell m \Big(P_\psi^F(\Phi) + \frac{2}{3} \delta\Big)\Big).
\end{equation}
For every \( n \in S_{\ell m}\) and \(x \in E_n \), by \eqref{***} and the fact that \( \ell m-\|\psi\| \leqslant S_n \psi(x) \leqslant \ell m \), we have that
\[
|\|\psi\| m_n(x) - \ell m| \leqslant |\|\psi\| m_n(x)-S_n \psi(x)|+|S_n \psi(x)-\ell m| \leqslant 2 \|\psi\|.
\]
Thus, one can conclude that
\begin{equation}\label{X}
 -(P_\psi^F(\Phi) + \delta) ||\psi|| m_n(x) \leqslant 2 |P_\psi^F(\Phi) + \delta| \cdot ||\psi|| - (P_\psi^F(\Phi) + \delta) \ell m.
\end{equation}
For sufficiently large $T > 0$, take $n \in G_T$ and let $E_n'$ be an $(n, \varepsilon)$-separated set for $Y_n$. For each $x \in E_n'$, there exists a unique $\ell \in \mathbb{N}$ with $\ell > \ell_0$ such that
\[
(\ell - 1)m < S_n \psi(x) \leqslant \ell m.
\]
Consequently, one has that
\[
\ell m > T \quad \text{and} \quad S_{n+1} \psi(x) > (\ell-1)m + m = \ell m.
\]
This implies that $(\ell - 1)m < S_r \psi(x) \leqslant \ell m$ if and only if  $r = n$.
Therefore, by \eqref{X} and \eqref{xx}, we have
\begin{align*}
& R_{\psi, T}^{F}( \Phi, \{ -(P_\psi^F(\Phi) + \delta) ||\psi|| m_n \}_{n \in G_T}, \varepsilon) \\
& \leqslant \sum_{\ell \geqslant \ell_0} \sup \Big\{
\sum_{n \in S_{\ell m}} \sum_{x \in E_n}
\exp\Big[ nF\Big( \frac{S_n \Phi(x)}{n}  \Big)
- ( P_\psi^F(\Phi) + \delta ) \|\psi\| m_n(x) \Big]: \\
& \qquad \qquad \qquad \qquad\qquad\qquad \qquad E_n \text{ is an } (n, \varepsilon)\text{-separated set of } X_n, n \in S_{\ell m}
\Big\} \\
& \leqslant \sum_{\ell \geqslant \ell_0} \sup \Big\{
\sum_{n \in S_{\ell m}} \sum_{x \in E_n}
\exp\Big( nF\Big( \frac{S_n \Phi(x)}{n} \Big) \Big) \cdot
\exp\Big( 2 |P_\psi^F(\phi) + \delta| \cdot ||\psi|| \Big) \\
& \qquad \qquad \cdot
\exp( -(P_\psi^F(\phi) + \delta) \ell m ): E_n \text{ is an } (n, \varepsilon)\text{-separated set of } X_n, n \in S_{\ell m} \Big\}\\
& \leqslant \exp\Big(2 |P_\psi^F(\Phi) + \delta| \cdot ||\psi||\Big)
\sum_{\ell \geqslant \ell_0} \exp\Big(-\ell m (P_\psi^F(\Phi) + \delta)\Big) P_{\psi, \ell m}^{F}( f,\Phi, \varepsilon) \\
& \leqslant \exp\Big(2 |P_\psi^F(\Phi) + \delta|\cdot||\psi||\Big) \sum_{\ell \geqslant \ell_0} \exp\Big(-\ell m (P_\psi^F(\Phi) + \delta)\Big) \exp\Big(\ell m (P_\psi^F(\Phi) + \frac{2}{3} \delta)\Big) \\
&  < \infty.
\end{align*}
Hence,
\[
\lim_{\varepsilon \to 0} \limsup_{T \to \infty} R_{\psi, T}^{F}(\Phi,  \left\{ -\left( P_\psi^F(\Phi) + \delta\right) ||\psi|| m_n(x) \right\}_{n \in G_T}, \varepsilon) < \infty.
\]
The arbitrariness of $\delta$ implies the desired result. The case of $P_\psi^F(\Phi)=\infty$ can be proven in a similar fashion.
\end{proof}

\begin{corollary} \label{induce nonliner p cor1}
Let \((X, f)\) be a TDS, \(\psi \in C(X, \mathbb{R})\) with \(\psi > 0\),  and let \(F: \mathbb{R}^{d} \rightarrow \mathbb{R}\) be continuous.  Given
$
\Phi \in C (X, \mathbb{R})^d$, let
$
\Psi(x) = (\Phi(x), \psi(x)) \in C (X, \mathbb{R})^{d+1}.
$
For each \( \beta \in \mathbb{R} \), define $G_{\beta}: \mathbb{R}^{d+1} \to \mathbb{R}$ as  $(a, b) \mapsto F(a) - \beta b$, where  $a \in \mathbb{R}^d$.
Then, we have that
\[
P_\psi^F(\Phi) \geqslant \inf\Big\{\beta \in \mathbb{R} : P^{G_{\beta}}(\Psi) \leqslant 0 \Big\}
\]
where $P^{G_{\beta}}(\Psi)$ is the nonlinear topological pressure of $\Psi$.
\end{corollary}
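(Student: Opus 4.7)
The plan is to invoke Theorem \ref{induce nonliner p thm}, which characterizes $P_\psi^F(\Phi)$ as $\inf\{\beta \in \mathbb{R} : \lim_{\varepsilon \to 0}\limsup_{T \to \infty} R_{\psi, T}^F(\Phi, -\beta\psi, \varepsilon) < \infty\}$. Setting $\beta^{\ast} = \inf\{\beta : P^{G_\beta}(\Psi) \leq 0\}$, the inequality $P_\psi^F(\Phi) \geq \beta^{\ast}$ will follow once I show that every $\beta$ satisfying $\lim_{\varepsilon \to 0}\limsup_{T \to \infty} R_{\psi, T}^F(\Phi, -\beta\psi, \varepsilon) < \infty$ also satisfies $P^{G_\beta}(\Psi) \leq 0$; this inclusion of the two defining sets forces the reverse inequality for their infima.

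The bridge between the two pressures is the pointwise identity
\[
n\, G_\beta\!\left(\tfrac{S_n \Psi(x)}{n}\right) = n F\!\left(\tfrac{S_n \Phi(x)}{n}\right) - \beta S_n \psi(x),
\]
whose right-hand side is exactly the exponent appearing in $R_{\psi, T}^F(\Phi, -\beta\psi, \varepsilon)$. Moreover, with $m = \inf \psi > 0$, for any $n > T/m$ one has $S_n \psi(x) \geq nm > T$ for every $x \in X$, hence $n \in G_T$ and $Y_n = X$; consequently every $(n, \varepsilon)$-separated subset of $X$ is an $(n, \varepsilon)$-separated subset of $Y_n$.

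Fix such a $\beta$ and pick $\varepsilon_0 > 0$, $T_0, C > 0$ such that $R_{\psi, T}^F(\Phi, -\beta\psi, \varepsilon) \leq C$ whenever $\varepsilon \in (0, \varepsilon_0)$ and $T \geq T_0$. For any such $\varepsilon$ and any integer $n > T_0/m$, choose $T \in [T_0, nm)$, and given an arbitrary $(n, \varepsilon)$-separated set $E \subset X = Y_n$, take $E_n' = E$ together with $E_{n'}' = \emptyset$ for the remaining $n' \in G_T$. This is an admissible configuration in the supremum defining $R_{\psi, T}^F$, so
\[
\sum_{x \in E} \exp\!\left(n G_\beta\!\left(\tfrac{S_n \Psi(x)}{n}\right)\right) \leq R_{\psi, T}^F(\Phi, -\beta\psi, \varepsilon) \leq C.
\]
Taking the supremum over $E$ yields $P_n^{G_\beta}(f, \Psi, \varepsilon) \leq C$ for every sufficiently large $n$, hence $\limsup_{n \to \infty} \tfrac{1}{n} \log P_n^{G_\beta}(f, \Psi, \varepsilon) \leq 0$; letting $\varepsilon \to 0$ produces $P^{G_\beta}(\Psi) \leq 0$, as needed.

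The only subtlety, which is really the crux rather than an obstacle, is the need to couple $T$ with $n$: the choice $T \geq T_0$ is required to use the uniform bound on $R^F$, while $T < nm$ is required to guarantee $Y_n = X$. These constraints are compatible precisely when $n > T_0/m$, which still covers all sufficiently large $n$ and therefore suffices to extract a non-positive exponential growth rate for $P_n^{G_\beta}$.
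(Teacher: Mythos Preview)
Your argument is correct and follows the same route as the paper: both prove the inclusion
\[
\Big\{\beta : \lim_{\varepsilon\to 0}\limsup_{T\to\infty} R^F_{\psi,T}(\Phi,-\beta\psi,\varepsilon) < \infty\Big\}\ \subset\ \Big\{\beta : P^{G_\beta}(\Psi) \le 0\Big\}
\]
by observing that $Y_n = X$ once $n > T/m$ and using the identity $nG_\beta\!\big(\tfrac{S_n\Psi(x)}{n}\big) = nF\!\big(\tfrac{S_n\Phi(x)}{n}\big) - \beta S_n\psi(x)$ to bound $P_n^{G_\beta}(f,\Psi,\varepsilon)$ by $R^F_{\psi,T}$. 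One small wording issue: a \emph{single} $T_0$ valid simultaneously for all $\varepsilon\in(0,\varepsilon_0)$ need not exist from the hypothesis $\lim_{\varepsilon\to 0}\limsup_{T\to\infty} R^F_{\psi,T}<\infty$, but your argument only ever uses $T_0=T_0(\varepsilon)$ for one fixed $\varepsilon$ at a time (which is exactly how the paper proceeds), so this is a harmless rephrasing rather than a gap.
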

\begin{proof}
 Take \( \displaystyle \beta \in \Big\{\beta \in \mathbb{R} \mid \lim _{\varepsilon \rightarrow 0} \lim _{T \rightarrow \infty} R_{\psi, T}^{F}( \Phi, -\beta \psi, \varepsilon)<\infty\Big\} \),
and let
\[
\lim_{\varepsilon \to 0} \lim_{T \to \infty} R_{\psi, T}^{F}( \Phi, -\beta \psi, \varepsilon) = a.
\]
Fix a small \( \varepsilon > 0 \), there exists \( T_0 \) such that for every \( T > T_0 \),
\[
R_{\psi,T}^{F}(\Phi, -\beta \psi, \varepsilon) < a + 2.
\]
For sufficiently large \( n \in \mathbb{N} \),  we have \( S_n \psi(x) > T \) for every \( x \in X \). Given such \( n \in G_T \), let \( E_n^{'} \) be an \( (n, \varepsilon) \)-separated set of \( Y_n = X \),
hence
\begin{align*}
&\sum_{x \in E_n^{'}}  \exp\Big(nF\Big(\frac{S_n \Phi(x)}{n}\Big) - \beta S_n \psi(x)\Big) \\
& < \sum_{n \in G_T} \sum_{x \in E_n^{'}} \exp\Big(nF\Big(\frac{S_n \Phi(x)}{n}\Big) - \beta S_n \psi(x)\Big) \\
& < a + 2.
\end{align*}
This yields that
\begin{align*}
P^{G_{\beta}}(\Psi) &= \lim _{\varepsilon \rightarrow 0} \limsup _{n \rightarrow \infty}
\frac{1}{n} \log \sup \Big\{
\sum_{x \in E_n}  \exp\Big(nF\Big(\frac{S_n \Phi(x)}{n}\Big) - \beta S_n \psi(x)\Big) : \\
& \quad\quad\quad\quad\quad\quad\quad\quad\quad\quad\quad\quad\quad\quad  E_n \text{ is an } (n, \varepsilon)\text{-separated set of } X
\Big\} \leqslant 0.
\end{align*}
Therefore, by Theorem \ref{induce nonliner p thm}, we have
\[
\Big\{\beta \in \mathbb{R} : \lim _{\varepsilon \rightarrow 0} \lim _{T \rightarrow \infty} R_{\psi, T}^{F}( \Phi, -\beta \psi, \varepsilon)<\infty\Big\} \subset \Big\{\beta \in \mathbb{R} : P^{G_{\beta}}(\Psi) \leqslant 0 \Big\}.
\]
Consequently, one has that
\[
\begin{aligned}
P^{F}_{\psi}(\Phi) &=\inf \Big\{\beta \in \mathbb{R} : \lim _{\varepsilon \rightarrow 0} \lim _{T \rightarrow \infty} R_{\psi, T}^{F}( \Phi, -\beta \psi, \varepsilon)<\infty\Big\}\\
&\geqslant \inf\Big\{\beta \in \mathbb{R} : P^{G_{\beta}}(\Psi) \leqslant 0 \Big\}.
\end{aligned}
\]
This completes the proof of the corollary.
\end{proof}

Let \((X, f)\), $F$, $\psi$, $\Phi$, $\Psi$ and $G_{\beta}$ be the same as in Corollary \ref{induce nonliner p cor1}, the following result hold.

\begin{corollary} \label{induce nonliner p cor2}
If either of the following two conditions is satisfied:
\begin{enumerate}
\item[(1)] $(f, \Psi)$ has an abundance of ergodic measures;
\item[(2)] $F$ is a convex function,
\end{enumerate}
then we have that
\[
P^{F}_{\psi}(\Phi) = \inf\Big\{\beta \in \mathbb{R} : P^{G_{\beta}}(\Psi) \leqslant 0 \Big\}
= \sup\Big\{\beta \in \mathbb{R} : P^{G_{\beta}}(\Psi) \geqslant 0 \Big\}.
\]
\end{corollary}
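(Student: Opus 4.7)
The plan is to combine Corollary~\ref{induce nonliner p cor1}, which already gives the inequality $P^{F}_{\psi}(\Phi) \ge \inf\{\beta \in \mathbb{R}: P^{G_{\beta}}(\Psi)\le 0\}$, with two additional pieces: first, a monotonicity/continuity analysis of $\beta \mapsto P^{G_{\beta}}(\Psi)$ that makes the infimum and supremum in the statement coincide at a single value $\beta^{*}$; and second, an upper bound $P^{F}_{\psi}(\Phi) \le \beta^{*}$ obtained from Theorem~\ref{induce nonliner p thm} by comparing $R^{F}_{\psi,T}$ to the nonlinear pressure $P_{n}^{G_{\beta}}(f,\Psi,\varepsilon)$.

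For the monotonicity analysis, the main observation is that under either hypothesis of the corollary the variational principle Theorem~\ref{high nonliner vp} applies to $(f,\Psi)$ with potential $G_{\beta}$: hypothesis (1) is stated directly for $(f,\Psi)$, while if $F$ is convex then $G_{\beta}(a,b)=F(a)-\beta b$ is convex on $\mathbb{R}^{d+1}$, so hypothesis (2) of Theorem~\ref{high nonliner vp} holds. Thus
\[
P^{G_{\beta}}(\Psi) \;=\; \sup_{\mu \in \mathcal{M}(X,f)} \Big[\, h_{\mu}(f) + F\Big(\textstyle\int \Phi\, d\mu\Big) - \beta \textstyle\int \psi\, d\mu\, \Big],
\]
which exhibits $\beta \mapsto P^{G_{\beta}}(\Psi)$ as a supremum of affine functions whose slopes are all $\le -m<0$, where $m=\inf\psi$. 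Consequently $P^{G_{\beta}}(\Psi)$ is convex and $P^{G_{\beta_{2}}}(\Psi)-P^{G_{\beta_{1}}}(\Psi)\le -m(\beta_{2}-\beta_{1})$ for $\beta_{2}>\beta_{1}$, so it is continuous and strictly decreasing. Since it tends to $+\infty$ as $\beta \to -\infty$ and to $-\infty$ as $\beta \to +\infty$, there is a unique $\beta^{*}$ with $P^{G_{\beta^{*}}}(\Psi)=0$, and both $\inf\{\beta : P^{G_{\beta}}(\Psi)\le 0\}$ and $\sup\{\beta : P^{G_{\beta}}(\Psi)\ge 0\}$ equal $\beta^{*}$.

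To establish $P^{F}_{\psi}(\Phi)\le \beta^{*}$, fix any $\beta>\beta^{*}$, so $P^{G_{\beta}}(\Psi)=-2\delta$ for some $\delta>0$. Given $\varepsilon>0$, the bound $\limsup_{n\to\infty}\tfrac{1}{n}\log P_{n}^{G_{\beta}}(f,\Psi,\varepsilon)\le P^{G_{\beta}}(\Psi)=-2\delta$ yields $N_{0}$ with $P_{n}^{G_{\beta}}(f,\Psi,\varepsilon)\le e^{-n\delta}$ for all $n\ge N_{0}$. Since any $(n,\varepsilon)$-separated subset $E_{n}'$ of $Y_{n}\subset X$ is automatically $(n,\varepsilon)$-separated in $X$, the sum appearing in $R^{F}_{\psi,T}(\Phi,-\beta\psi,\varepsilon)$ over $E_{n}'$ is bounded by $P_{n}^{G_{\beta}}(f,\Psi,\varepsilon)$. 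Using $S_{n}\psi\le nM$ with $M=\sup\psi$, the membership $n\in G_{T}$ forces $n> T/M$, so once $T/M\ge N_{0}$ we get
\[
R^{F}_{\psi,T}(\Phi,-\beta\psi,\varepsilon) \;\le\; \sum_{n\in G_{T}} P_{n}^{G_{\beta}}(f,\Psi,\varepsilon) \;\le\; \sum_{n>T/M} e^{-n\delta} \;\le\; \frac{e^{-\delta T/M}}{1-e^{-\delta}}\;\longrightarrow\; 0.
\]
Hence $\lim_{\varepsilon\to 0}\limsup_{T\to\infty} R^{F}_{\psi,T}(\Phi,-\beta\psi,\varepsilon)=0<\infty$, and Theorem~\ref{induce nonliner p thm} gives $P^{F}_{\psi}(\Phi)\le\beta$. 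Letting $\beta\downarrow\beta^{*}$ finishes the argument.

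The main obstacle I anticipate is coordinating the two a priori unrelated facts needed for the decay estimate in the last step: the exponential decay of $P_{n}^{G_{\beta}}(f,\Psi,\varepsilon)$ (which comes from $P^{G_{\beta}}(\Psi)<0$) and the geometric constraint $n>T/M$ on indices in $G_{T}$ (which comes from the scaling function being bounded above). Without the latter, the sum over $n\in G_{T}$ of exponentially decaying terms would be bounded but not $o(1)$ in $T$, and the conclusion $\limsup_{T}R^{F}_{\psi,T}<\infty$ would still hold but would not illuminate why the threshold $\beta^{*}$ is sharp. The positivity $\psi\ge m>0$ is used both here (through $n>T/M$) and in deriving the strict monotonicity of $\beta \mapsto P^{G_{\beta}}(\Psi)$, so the proof hinges on exploiting this hypothesis in two different places.
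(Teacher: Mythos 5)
Your argument follows essentially the same route as the paper's: you use Corollary~\ref{induce nonliner p cor1} for the lower bound, the variational principle Theorem~\ref{high nonliner vp} (applicable under either hypothesis, since $G_\beta$ inherits convexity from $F$) to establish strict monotonicity and continuity of $\beta\mapsto P^{G_\beta}(\Psi)$ so that the infimum and supremum coincide, and then bound $R^{F}_{\psi,T}(\Phi,-\beta\psi,\varepsilon)$ by comparing it term-by-term with $P_n^{G_\beta}(f,\Psi,\varepsilon)$ when $P^{G_\beta}(\Psi)<0$. Your packaging of the monotonicity step --- viewing $P^{G_\beta}(\Psi)$ as a supremum of affine functions of $\beta$ with slopes $\leqslant -m$, hence strictly decreasing with $P^{G_{\beta_2}}(\Psi)-P^{G_{\beta_1}}(\Psi)\leqslant -m(\beta_2-\beta_1)$ --- is a slightly slicker rendering of the same estimate the paper carries out with an explicit near-optimizing measure, and your use of $n>T/M$ for $n\in G_T$ gives decay to $0$ rather than mere finiteness, though either suffices for Theorem~\ref{induce nonliner p thm}.

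There is one gap: the monotonicity/continuity analysis, the limits at $\pm\infty$, and the existence of a unique zero $\beta^*$ all presuppose that $P^{G_\beta}(\Psi)$ is real-valued, which by Theorem~\ref{high nonliner vp} is equivalent to $h_{\mathrm{top}}(f)<+\infty$. If $h_{\mathrm{top}}(f)=+\infty$, then $P^{G_\beta}(\Psi)=+\infty$ for every $\beta$, there is no finite $\beta^*$, and the convexity argument does not apply. The corollary remains true in that case --- $\{\beta:P^{G_\beta}(\Psi)\leqslant 0\}=\emptyset$ so the infimum is $+\infty$ (by the stated convention), $\{\beta:P^{G_\beta}(\Psi)\geqslant 0\}=\mathbb{R}$ so the supremum is $+\infty$, and Corollary~\ref{induce nonliner p cor1} already forces $P^{F}_{\psi}(\Phi)\geqslant +\infty$ --- but this case needs to be disposed of explicitly before you assume finiteness, as the paper does in its opening paragraph.
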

\begin{proof}
It is easy to see that $G_\beta$ is a convex function for each $\beta\in \mathbb{R}$, since $F$ is a convex function.

If \( h_{\text {top }}(f) = +\infty \), by Theorem \ref{high nonliner vp}, one can see that $P^{G_{\beta}}(\Psi)=+\infty$ for each $\beta\in \mathbb{R}$. Hence, we have that
\[
P^{F}_{\psi}(\Phi) = \inf\left\{\beta \in \mathbb{R} : P^{G_{\beta}}(\Psi) \leqslant 0 \right\}
= \sup\left\{\beta \in \mathbb{R} : P^{G_{\beta}}(\Psi) \geqslant 0 \right\}=+\infty.
\]
Here we make the convention that $\inf \emptyset =+\infty$.

In the following, we assume that \( h_{\text {top }}(f) < +\infty \).
By Theorem \ref{high nonliner vp}, for every \( \beta \in \mathbb{R} \), we have
\begin{align*}
P^{G_{\beta}}(\Psi) & = \sup \Big\{h_{\nu}(f) + G_{\beta}\Big(\int \Psi d \nu\Big) : \nu \in \mathcal{M}(X, f)\Big\} \\
& = \sup \Big\{h_{\nu}(f) + F\Big(\int \Phi d\nu \Big) - \beta \int \psi d \nu : \nu \in \mathcal{M}(X, f)\Big\}\\
&<+\infty.
\end{align*}
Given \( \beta_1, \beta_2 \in \mathbb{R} \) with \( \beta_1 < \beta_2 \). For each \( 0 < \varepsilon \leqslant \frac{m(\beta_2 - \beta_1)}{2} \) (recall that $m=\inf \psi$), there exists \( \mu \in \mathcal{M}(X, f) \) such that
\[
\begin{aligned}
& \sup \Big\{  h_{\nu}(f) + F\Big(\int \Phi d\nu\Big) - \beta_2 \int \psi d \nu : \nu \in \mathcal{M}(X, f) \Big\} \\
& < h_{\mu}(f) + F\Big(\int \Phi d \mu\Big) - \beta_2 \int \psi d \mu + \varepsilon \\
& \le h_{\mu}(f) + F\Big(\int \Phi d \mu\Big) - (\beta_2 - \beta_1)\Big(\int \psi d \mu - \frac{m}{2}\Big) - \beta_1 \int \psi d \mu \\
& \leqslant \sup \Big\{ h_{\mu}(f) + F\Big(\int \Phi d \mu\Big) - \beta_1 \int \psi d \mu : \mu \in \mathcal{M}(X, f)\Big\}.
\end{aligned}
\]
Hence, the map \( \beta \mapsto P^{G_\beta}(\Psi) \) is strictly decreasing.  Since \( P^{G_\beta}(\Psi) \) is continuous in \( \beta \), we have that
\[
\inf\left\{\beta \in \mathbb{R} : P^{G_{\beta}}(\Psi) \leqslant 0 \right\}
= \sup\left\{\beta \in \mathbb{R} : P^{G_{\beta}}(\Psi) \geqslant 0 \right\}.
\]
 By Corollary \ref{induce nonliner p cor1}, one has that \( P^{F}_{\psi}(\Phi) \geqslant \inf\left\{\beta \in \mathbb{R} : P^{G_{\beta}}(\Psi) \leqslant 0 \right\} \).

 To complete the proof of the corollary, it suffices to show that
\[
P^{F}_{\psi}(\Phi) \leqslant \inf\left\{\beta \in \mathbb{R} : P^{G_{\beta}}(\Psi) \leqslant 0 \right\}.
\]
By Theorem \ref{induce nonliner p thm} , this is equivalent to show that $R_{\psi, T}^F(\Phi, -\beta \psi, \varepsilon) < \infty$ for sufficiently large $T$ and small $\varepsilon$, provided that $P^{G_{\beta}}(\Psi) < 0 $.

Let \( P^{G_\beta}(\Psi) = 2a < 0 \). For every small \( \varepsilon > 0 \), there exists \( N \in \mathbb{N} \) such that for every positive integer \( n > N \),
\[
 \sup_{E_n} \sum_{x \in E_n} \exp \Big(n F\Big(\frac{ S_n\Phi(x)}{n}\Big)-\beta S_n \psi(x)\Big) \leqslant e^{na},
\]
where the supremum is taken over all $(n, \varepsilon)$-separated sets of $X$.
Consequently,  for sufficiently large \( T>0 \), we have
\begin{align*}
R_{\psi, T}^{F}( \Phi, -\beta \psi, \varepsilon )
& \leqslant \sum_{n \geqslant N} \sup_{E_n} \sum_{x \in E_n} \exp \Big(n F\Big(\frac{ S_n\Phi(x)}{n}\Big)-\beta S_n \psi(x)\Big) \\
& \leqslant \sum_{n \geqslant N} e^{n a}\\
& <\infty.
\end{align*}
This completes the proof of the corollary.
\end{proof}

\begin{remark} Under the condition of Corollary \ref{induce nonliner p cor2},   assume that \( h_{top}(f) < +\infty \), then one has that  \( P^{G_{\beta}}(\Psi) = 0 \) for $\beta= P^{F}_{\psi}(\Phi)$. In fact, $P^{G_{\beta}}(\Psi)\in \mathbb{R}$ for every $\beta\in \mathbb{R}$, and the map \( \beta \mapsto P^{G_\beta}(\Psi) \) is continuous and strictly decreasing. This implies the desired result.
\end{remark}

The following theorem establishes the variational principle for nonlinear induced topological pressure.

\begin{theorem}
Let \((X, f)\) be a TDS, $\Phi \in C(X, \mathbb{R})^d$ and \(\psi \in C(X, \mathbb{R})\) with \(\psi > 0\), and let \(F: \mathbb{R}^{d} \rightarrow \mathbb{R}\) be continuous.
If either of the following two conditions is satisfied:
\begin{enumerate}
\item[(1)] $(f, \Psi)$ has an abundance of ergodic measures, where $\Psi(x) = (\Phi(x), \psi(x))$;
\item[(2)] $F$ is a convex function,
\end{enumerate}
then
\[
P_{\psi}^{F}(\Phi) = \sup \Big\{ \frac{h_{\nu}(f) + F\left(\int \Phi d \nu\right)}{\int \psi d \nu} : \nu \in \mathcal{M}(X, f) \Big\}.
\]
\end{theorem}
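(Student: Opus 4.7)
The strategy is to obtain this variational principle as a direct consequence of Corollary \ref{induce nonliner p cor2} together with the variational principle for the (non-induced) high-dimensional nonlinear topological pressure, namely Theorem \ref{high nonliner vp}, applied to $\Psi$ and $G_\beta$. Before invoking these results I would check that the hypotheses propagate: if $F$ is convex, then $G_\beta(a,b) = F(a) - \beta b$ is convex on $\mathbb{R}^{d+1}$ for every $\beta \in \mathbb{R}$, so the convexity condition (2) passes from $F$ to $G_\beta$; if $(f,\Psi)$ has an abundance of ergodic measures, this is precisely what condition (1) of both cited results requires for the pair $(\Psi, G_\beta)$. Under either assumption both Corollary \ref{induce nonliner p cor2} and Theorem \ref{high nonliner vp} are applicable.

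The first main step is to invoke Corollary \ref{induce nonliner p cor2} to rewrite
\[
P_\psi^F(\Phi) = \inf\{\beta \in \mathbb{R} : P^{G_\beta}(\Psi) \leqslant 0\},
\]
with the convention $\inf\emptyset = +\infty$. The second step is to apply Theorem \ref{high nonliner vp} to the pair $(\Psi, G_\beta)$, yielding
\[
P^{G_\beta}(\Psi) = \sup_{\nu \in \mathcal{M}(X,f)}\Big\{h_\nu(f) + F\Big(\int \Phi\, d\nu\Big) - \beta \int \psi\, d\nu\Big\}.
\]
Since $\psi > 0$, the inequality $P^{G_\beta}(\Psi) \leqslant 0$ is equivalent to the assertion that
\[
\beta \geqslant \frac{h_\nu(f) + F(\int \Phi\, d\nu)}{\int \psi\, d\nu} \quad \text{for every } \nu \in \mathcal{M}(X,f),
\]
which in turn is equivalent to $\beta$ being an upper bound for the supremum appearing in the theorem. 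Taking the infimum over such $\beta$ gives exactly
\[
P_\psi^F(\Phi) = \sup_{\nu \in \mathcal{M}(X,f)} \frac{h_\nu(f) + F(\int \Phi\, d\nu)}{\int \psi\, d\nu}.
\]

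The only point that deserves separate attention is the degenerate case $h_{\topp}(f) = +\infty$, which is not excluded from the statement. In that case the proof of Corollary \ref{induce nonliner p cor2} shows that $P^{G_\beta}(\Psi) = +\infty$ for every $\beta$, so by the convention the infimum above is $+\infty$; on the other hand, since $\psi \leqslant M$ and $F$ is bounded on the compact set $\{\int \Phi\, d\nu : \nu \in \mathcal{M}(X,f)\}$, one has $\sup_\nu (h_\nu(f) + F(\int \Phi\, d\nu))/\int \psi\, d\nu = +\infty$ as well, so both sides agree. I do not anticipate any serious obstacle here: once the auxiliary Theorem \ref{induce nonliner p thm} and Corollaries \ref{induce nonliner p cor1}, \ref{induce nonliner p cor2} have been established (which is where the real technical work sits, in bridging the induced pressure $P^F_\psi(\Phi)$ to the sum over $G_T$ and in passing from convexity/abundance of ergodic measures to a true variational formula via a conjugate-duality argument in $\beta$), the variational principle itself reduces to the clean identity above.
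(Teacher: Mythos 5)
Your proposal is correct and follows essentially the same route as the paper's proof: both use Corollary \ref{induce nonliner p cor2} to identify $P_\psi^F(\Phi)$ with $\inf\{\beta : P^{G_\beta}(\Psi) \leqslant 0\}$, apply Theorem \ref{high nonliner vp} to express $P^{G_\beta}(\Psi)$ as a variational supremum, and then divide by $\int\psi\,d\nu > 0$ to unwind the identity. The paper phrases the final step by separately testing $\beta$ above and below $P_\psi^F(\Phi)$ rather than directly characterizing the infimum as you do, and handles $h_{\topp}(f)=+\infty$ via a ``WLOG $P_\psi^F(\Phi)$ finite'' reduction, but these are cosmetic differences.
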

\begin{proof}
For every \( \beta\in \mathbb{R}\), let $G_\beta$ be the same as in Corollary \ref{induce nonliner p cor1}. Without loss of generality, one may assume that $P_{\psi}^{F}(\Phi)$ is finite. Otherwise, one can show that $P^{G_{\beta}}(\Psi)=+\infty$ for each $\beta\in \mathbb{R}$, which implies that $h_{\text {top }}(f) = +\infty$. This implies the desired result immediately.

 For every \( \beta > P^{F}_{\psi}(\Phi) \),  by Corollary \ref{induce nonliner p cor2} and Theorem \ref{high nonliner vp},  one has that
\[
0 \ge P^{G_{\beta}}(\Psi) = \sup \Big\{  h_{\nu}(f) + F\Big(\int \Phi d \nu \Big) - \beta \int \psi d \nu : \nu \in \mathcal{M}\left(X, f\right) \Big\}.
\]
Hence,   we have that
$$
0 > \frac{h_{\nu}(f) + F\Big(\int \Phi d\nu\Big)}{\int \psi d\nu} - \beta,\quad \forall \nu \in \mathcal{M}(X, f).
$$
The arbitrariness of \( \beta >P^F_\psi(\Phi) \) and \( \nu \in \mathcal{M}(X, f) \)  implies that
$$
P^F_\psi(\Phi) \geq \sup \Big\{ \frac{h_{\nu}(f) + F\left(\int \Phi d \nu\right)}{\int \psi d \nu} : \nu \in \mathcal{M}(X, f) \Big\}.
$$

For every \( \beta < P^F_\psi(\Phi) \), by Corollary \ref{induce nonliner p cor2} and Theorem \ref{high nonliner vp},  we have that
$$
0 \le P^{G_\beta}(\Psi) = \sup \Big\{ h_\nu(f) + F\Big(\int \Phi d\nu \Big) - \beta \int \psi d\nu : \nu \in \mathcal{M}(X, f) \Big\},
$$
which implies that
$$
\beta \le \sup \Big\{ \frac{h_\nu(f) + F(\int \Phi d\nu)}{\int \psi d\nu} : \nu \in \mathcal{M}(X, f) \Big\}.
$$
Since \( \beta\) is chosen arbitrarily,  one can conclude that
$$
P_\psi^F(\Phi) \leqslant \sup \Big\{ \frac{h_\nu(f) + F(\int \Phi d\nu)}{\int \psi d\nu} : \nu \in \mathcal{M}(X, f) \Big\}.
$$
This completes the proof of the variational principle for nonlinear induced topological pressure.
\end{proof}

	


	

\end{document}